\newcommand{\be}{\begin{equation}}
\newcommand{\ee}{\end{equation}}
\newcommand{\beq}{\begin{eqnarray}}
\newcommand{\eeq}{\end{eqnarray}}
\newtheorem{thm}{Theorem}[section]
\newtheorem{lma}[thm]{Lemma}
\newtheorem{prop}[thm]{Proposition}
\newtheorem{cor}[thm]{Corollary}
\newtheorem{defn}[thm]{Definition}
\theoremstyle{remark}
\newtheorem{rem}[thm]{Remark}
\numberwithin{equation}{section}
\newtheorem{claim}{Claim}[section]
\def\be{\begin{equation}}
\def\ee{\end{equation}}
\def\bee{\begin{equation*}}
\def\eee{\end{equation*}}
\def\vp{\varphi}
\def\K{K\"ahler }
\def\KR{K\"ahler-Ricci }
\def\Ric{\text{\rm Ric}}
\def\Rm{\text{\rm Rm}}
\def\ddb{\sqrt{-1}\partial\bar\partial}
\def\tr{\operatorname{tr}}
\def\de{\partial}
\def\e{\varepsilon}
\def\ve{\varepsilon}
\def\a{{\alpha}}
\def\b{{\beta}}
\begin{document}

\title[]
{On K\"ahler manifolds with non-negative mixed curvature}

\author{Jianchun Chu}
\address[Jianchun Chu]{School of Mathematical Sciences, Peking University, Yiheyuan Road 5, Beijing 100871, People's Republic of China}
\email{jianchunchu@math.pku.edu.cn}

\author{Man-Chun Lee}
\address[Man-Chun Lee]{Department of Mathematics, The Chinese University of Hong Kong, Shatin, Hong Kong, China}
\email{mclee@math.cuhk.edu.hk}

\author{Jintian Zhu}
\address[Jintian Zhu]{Institute for Theoretical Sciences, Westlake University, 600 Dunyu Road, 310030, Hangzhou, Zhejiang, People's Republic of China}
\email{zhujintian@westlake.edu.cn}

\renewcommand{\subjclassname}{\textup{2020} Mathematics Subject Classification}
\subjclass[2020]{Primary 53C55; Secondary 32Q15, 53C21}

\date{\today}

\begin{abstract}
In this work, we investigate compact K\"ahler manifolds with non-negative or quasi-positive mixed curvature coming from a linear combination of the Ricci and holomorphic sectional curvature, which covers various notions of curvature considered in the literature. Specifically, we prove a splitting theorem,  analogous to the Cheeger-Gromoll splitting theorem, for complete K\"ahler manifolds with non-negative mixed curvature containing a line, and then establish a structure theorem for compact K\"ahler manifolds with non-negative mixed curvature. We also show that the Hodge numbers of compact \K manifolds with quasi-positive mixed curvature must vanish. Both results are based on the conformal perturbation method.
\end{abstract}

\maketitle

\markboth{Jianchun Chu, Man-Chun Lee, Jintian Zhu}{On K\"ahler manifolds with non-negative mixed curvature}

\section{Introduction}\label{introduction}

In \K geometry, there are various notions of curvature, e.g. holomorphic bisectional curvature ($\mathrm{BK}$), Ricci curvature ($\Ric$), holomorphic sectional curvature  ($\mathrm{H}$), etc. K\"ahler manifolds with positive, quasi-positive (i.e. non-negative everywhere and strictly positive at some point) or non-negative curvature have been of great interest in complex geometry and algebraic geometry.

For instances, it was conjectured by Frankel \cite{Frankel1961} that a compact K\"ahler manifold with $\mathrm{BK}>0$ is biholomorphic to the complex projective space. The Frankel conjecture was resolved independently by Siu-Yau \cite{SiuYau1980} via differential geometry method and by Mori \cite{Mori1979} (he proved the more general Hartshorne conjecture, see \cite{Hartshorne1970}) via algebraic geometry method. As a natural continuation, Howard-Smyth-Wu \cite{HowardSmythWu1981} and Mok \cite{Mok1988} established the structure theorem for compact K\"ahler manifolds with $\mathrm{BK}\geq0$. That said, the universal cover of those compact \K manifolds split isometrically as product of Euclidean space with compact \K manifolds of certain type.  Since then, there has been interest in generalizing structural theory to \K manifolds with weaker curvature conditions. In particular, \K manifolds with $\Ric\geq 0$ and $\mathrm{H}\geq 0$ are very often to be considered, while $\Ric\geq 0$ and $\mathrm{H}\geq 0$ are independent to each other.

The Kodaira embedding theorem \cite{Kodaira1954} implies that any compact \K manifold with $\Ric>0$ is projective. Recall that a projective manifold is said to be rationally connected if any two points in it can be connected by a chain of rational curves. The work of Campana \cite{Campana1992} and Koll\'ar-Miyaoka-Mori \cite{KollarMiyaokaMori1992} showed that compact \K manifolds with $\Ric>0$ are rationally connected. In \cite{Yang2020}, it was further generalized by Yang to compact \K manifolds with quasi-positive $\Ric$, see also \cite{LeeLi2022} for an alternative approach. The structure theorem for compact K\"ahler manifolds with $\mathrm{Ric}\geq0$ was established by Campana-Demailly-Peternell \cite{CampanaDemaillyPeternell2015}.

On the other hand, the projectivity and rationally connectedness of compact \K manifolds with $\mathrm{H}>0$ was conjectured by Yau \cite[Problem 47]{Yau1982}. By studying the maximally rationally connected (MRC) fibration, Heier-Wong \cite{HeierWong2020} proved that projective \K manifolds with quasi-positive $\mathrm{H}$ are rationally connected. In \cite{Yang2018}, Yang proved that compact K\"ahler manifolds with $\mathrm{H}>0$ are all projective and thus confirmed the conjecture of Yau affirmatively. Very recently, Zhang-Zhang  \cite{ZhangZhang2023} generalized Yang's result to quasi-positive case, confirming a conjecture of Yang affirmatively. In \cite{Matsumura2022}, Matsumura established a structure theorem for projective manifolds with $\mathrm{H}\geq0$ by studying the structure of MRC fibration. Without projectivity assumption,  the structure theorem for \K manifolds with $\mathrm{H}\geq 0$ was given by Matsumura \cite{Matsumura2020} when $n=2$ and Zhang-Zhang  \cite{ZhangZhang2023} when $n=3$.

Motivated by the success in the study of $\Ric\geq 0$ and $\mathrm{H}\geq 0$, there has been interest in studying \K manifolds with non-negative intermediate curvatures between $\Ric$ and $\mathrm{H}$. For instances, 
Ni \cite{Ni2021a} initiated the notion of $k$-Ricci curvature ($\Ric_{k}$) for $1\leq k\leq n$ (see Subsection \ref{subsec:applications} for the definition) in order to study the $k$-hyperbolicity of K\"ahler manifolds. The $k$-Ricci curvature can be regarded as an intermediate curvature between the Ricci and holomorphic sectional curvature, i.e. $\Ric_{n}=\Ric$ and $\Ric_{1}=\mathrm{H}$. In \cite{Ni2021b}, Ni showed that a compact \K manifold with $\Ric_k>0$ is projective and rationally connected. It is natural to ask if any structure theorem still holds for \K manifolds with $\Ric_k\geq 0$.
Motivated by these, in this work we consider a more general curvature conditions called the mixed curvature, which was introduced by the first- second-named author and Tam \cite{ChuLeeTam2022}.  Precisely, the mixed curvature is defined as follows:
\begin{defn}
Let $(M^n,g)$ be a \K manifold. Define
\begin{equation*}
\mathcal{C}_{\a,\b,g}(X)=\a \Ric(X,\bar X)+\b  R(X,\bar X,X,\bar X)
\end{equation*}
for $X\in T^{1,0}M$ with $|X|_g=1$. We say that $\mathcal{C}_{\a,\b,g}\geq 0$ (resp. $\mathcal{C}_{\a,\b,g}>0$) if $\mathcal{C}_{\a,\b,g}(X)\geq 0$ (resp. $\mathcal{C}_{\a,\b,g}(X)>0$) for all $x\in M$ and unitary $X\in T^{1,0}_xM$. We will omit the index $g$ when the content is clear.
\end{defn}

This condition is natural in the sense that it is a linear combination of $\Ric\geq 0$ and $\mathrm{H}\geq0$. 
In contrast with the $k$-Ricci curvature, the mixed curvature is a weaker intermediate between the Ricci and holomorphic sectional curvature. Indeed, it was shown in \cite[Lemma 2.2]{ChuLeeTam2022} that by choosing $\a=k-1$ and $\b=n-k$, the sign of $\mathcal{C}_{k-1,n-k}$ are implied by that of $\Ric_k$. And more importantly, it behaves well analytically. In particular, the first-, second-named author and Tam \cite{ChuLeeTam2022} showed that \K manifolds with $\mathcal{C}_{\a,\b}<0$ where $\a,\b>0$ must be projective with ample canonical line bundle. This answers a question of Ni \cite{Ni2021a} in classifying compact \K manifolds with $\Ric_k<0$. As a natural counterpart, it is tempting to ask for a structure result related to $\mathcal{C}_{\a,\b}\geq 0$. Indeed, a recent work of Tang \cite{Tang2024} showed the projectivity and rationally connectedness of compact K\"ahler  manifolds with quasi-positive $\mathcal{C}_{\a,\b}$.

Our main result is the structure theorem for compact \K manifolds with $\mathcal{C}_{\a,\b}\geq 0$ where $\a>0$, $\b\neq 0$ and $\a+\b>0$. This covers compact \K manifolds with $\Ric_k\geq 0$ for $2\leq k\leq n-1$.

\begin{thm}\label{thm:structure}
Suppose $(M^n,g)$ is a compact \K manifold with $\mathcal{C}_{\a,\b}\geq0$ where $\a>0$, $\b\neq 0$ and $\a+\b>0$. Then the universal cover $(\tilde M,\tilde g)$ of $(M,g)$ splits isometrically as
\[
(\tilde M,\tilde g) \cong (\mathbb{C}^k,g_{\mathbb{C}^k}) \times \prod_{j} (X_j,h_j),
\]
where each $(X_j,h_j)$ is a compact, simply connected, projective \K manifold with $h^{2,0}(X_j)=0$. Moreover, each $X_j$ supports a \K metric (possibly different from $h_j$) with positive scalar curvature. In particular, the canonical line bundle $K_{X_{j}}$ is not pseudo-effective.
\end{thm}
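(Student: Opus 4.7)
The plan is to obtain the splitting in three stages: first strip off flat $\mathbb{C}$-factors from the universal cover by iterating the K\"ahler splitting theorem proved earlier in this paper, then apply the de Rham decomposition to the compact residue, and finally extract the Hodge-theoretic and algebro-geometric consequences on each irreducible factor using the conformal perturbation machinery that underlies the Hodge number vanishing result.

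\emph{Stripping off Euclidean factors.} Lift $(M,g)$ to the universal cover $(\tilde M,\tilde g)$; the mixed curvature hypothesis is inherited. If $\tilde M$ is non-compact, cocompactness of the $\pi_1(M)$-action together with a standard line-extraction argument (using compactness of a fundamental domain) produces a line in $\tilde M$. The K\"ahler splitting theorem of this paper then yields an isometric splitting $\tilde M \cong \mathbb{C} \times M'$, where the complex factor appears because the Cheeger--Gromoll parallel direction is automatically $J$-invariant. Iterating, and exploiting that $\pi_1(M)$ acts by pairs of isometries on each stage, one reaches
\[
(\tilde M,\tilde g) \cong (\mathbb{C}^k, g_{\mathbb{C}^k}) \times (N,h)
\]
with $(N,h)$ a complete K\"ahler manifold containing no line. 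An equivariance argument as in Cheeger--Gromoll then forces $N$ to be compact and simply connected, and the K\"ahler de Rham decomposition writes $N = \prod_j X_j$ as a product of compact, simply connected, irreducible K\"ahler factors, each inheriting $\mathcal{C}_{\alpha,\beta}\geq 0$.

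\emph{Hodge vanishing, projectivity, and positive scalar curvature.} For the stated properties of each $X_j$, I would invoke the companion Hodge number vanishing theorem in this paper, which applies to compact K\"ahler manifolds with quasi-positive $\mathcal{C}_{\alpha,\beta}$. Starting from $\mathcal{C}_{\alpha,\beta}\geq 0$ on $X_j$, the plan is to apply a conformal perturbation (as in that theorem) to promote the curvature to quasi-positive, thereby obtaining $h^{p,0}(X_j)=0$ for $p\geq 1$; in particular, $h^{2,0}(X_j)=0$. Combined with the K\"ahler hypothesis, this yields $H^2(X_j,\mathbb{R})=H^{1,1}(X_j,\mathbb{R})$, so small rational perturbations of the K\"ahler class remain K\"ahler and Kodaira's embedding theorem yields projectivity. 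For the positive scalar curvature claim, tracing $\mathcal{C}_{\alpha,\beta}\geq 0$ gives $(\alpha+\beta)S\geq 0$, hence $S\geq 0$ since $\alpha+\beta>0$; irreducibility of $X_j$ together with the perturbation argument should produce a K\"ahler metric with $S>0$. Finally, positive scalar curvature gives $c_1(K_{X_j})\cdot[\omega]^{n-1}<0$, and since any pseudo-effective class pairs non-negatively with $[\omega]^{n-1}$, $K_{X_j}$ is not pseudo-effective.

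\emph{Main obstacle.} The principal technical difficulty lies in the conformal perturbation step: one must carefully choose a (possibly non-K\"ahler) perturbation of $g$ so that the modified $\mathcal{C}_{\alpha,\beta}$ gains quasi-positivity on each irreducible factor, and then translate this analytic gain into a Hodge-theoretic vanishing and into the existence of a nearby K\"ahler metric with positive scalar curvature. The precise role of the hypotheses $\alpha>0$, $\beta\neq 0$, $\alpha+\beta>0$ is expected to manifest here, both in making the perturbation equation solvable and in controlling the resulting sign of the curvature along the way.
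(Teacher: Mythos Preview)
Your outline for the first stage---iterating the splitting theorem to peel off $\mathbb{C}$-factors, then applying the de Rham decomposition to the compact simply connected remainder $N=\prod_j X_j$---matches the paper exactly.

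The gap is in your treatment of the irreducible factors. You propose to conformally perturb each $(X_j,h_j)$ so that $\mathcal{C}_{\alpha,\beta}$ becomes quasi-positive, and then invoke the quasi-positive Hodge vanishing theorem. This does not work: a conformal change of a K\"ahler metric is not K\"ahler, so the hypothesis of that theorem is lost; and there is no mechanism to globally promote $\mathcal{C}_{\alpha,\beta}\ge 0$ to quasi-positivity---the conformal perturbations in this paper are used \emph{locally}, to push maxima of co-mass functions around, never to manufacture a point of strict positivity out of nothing. (Indeed, a Ricci-flat K\"ahler factor with $\beta>0$ and $H\equiv 0$ would have $\mathcal{C}_{\alpha,\beta}\equiv 0$ and no amount of perturbation creates quasi-positivity; you need an independent argument to exclude this case.) Similarly, your route to a K\"ahler metric of positive scalar curvature via ``perturbation'' is not a real argument: $\mathrm{scal}_{h_j}\ge 0$ alone does not produce a nearby K\"ahler metric with $\mathrm{scal}>0$.

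The paper closes this gap differently. After the de Rham splitting it invokes Berger's holonomy classification: each $X_j$ is Calabi--Yau, Hermitian symmetric of compact type, or has $\mathrm{Hol}=U(n_j)$. Averaging $\mathcal{C}_{\alpha,\beta}\ge 0$ gives $\mathrm{scal}_{h_j}\ge 0$ (the coefficient is $\frac{\alpha}{n}+\frac{2\beta}{n(n+1)}$, not $\alpha+\beta$, but it is positive under the hypotheses). One then runs the K\"ahler--Ricci flow from $h_j$: by the strong maximum principle either $\mathrm{scal}>0$ for $t>0$, or $\mathrm{Ric}(h_j)\equiv 0$; in the latter case $\mathcal{C}_{\alpha,\beta}\ge 0$ with $\beta\neq 0$ forces $H\equiv 0$, hence $X_j$ is flat, contradicting compact simply-connectedness. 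This is exactly where $\beta\neq 0$ enters, and it simultaneously yields the positive scalar curvature metric and eliminates the Calabi--Yau case. Hermitian symmetric spaces of compact type are Fano, so $h^{2,0}=0$ trivially; for the $U(n_j)$ case one uses a separate vanishing theorem (Theorem~\ref{thm:holo-vanishing}) based on the Bochner formula for a would-be parallel $(2k,0)$-form, where full holonomy rules out nontrivial parallel forms.
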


The heart of the paper is Section \ref{sec:splitting}, where we prove a splitting theorem under $\mathcal{C}_{\a,\b}\geq0$. In contrast to Cheeger-Gromoll's splitting theorem \cite{CheegerGromoll1971}, due to $\beta\neq0$, the situation becomes more subtle since $\mathcal C_{\alpha,\beta}$ is not compatible with the Bochner formula. Our key observation is that the tangential Ricci curvature along geodesic lines vanishes, which is enough for  the Laplacian comparison type estimate along the line. Then we  adapt the idea from purely Riemannian geometry to find some foliation of $(M,g)$ by geodesic lines. This is motivated by the work of Liu \cite{Liu2013} and Chodosh-Eichmair-Moraru \cite{ChodoshEichmairMoraru2019} in constructing the foliation by area-minimizing objects.

Briefly speaking, we consider the conformal metric $\hat{g}_{\ve}=e^{-2\ve\vp}g$ such that $\mathcal{C}_{\a,\b,\hat{g}_{\ve}}>0$ in some annulus while $\hat{g}_{\ve}=g$ outside the outer ball. After a series of rather technical analysis, we show that the $\hat{g}_{\ve}$-geodesic segment must intersect the inner ball, and then the required $g$-geodesic line arises as a limit of $\ve\to0$. The main source of complication comes from the non-K\"ahlerity of $\hat{g}_{\ve}$. Instead of the conformal perturbation, one straightforward attempt is to consider the K\"ahler perturbation $g_{i\bar{j}}+\ve\vp_{i\bar{j}}$. However, since its curvature involves the fourth order derivative of $\vp$, it seems quite difficult to choose suitable $\vp$ satisfying the above curvature requirement. 

On the other hand, there are difficulties in carrying out the analogous argument of \cite{Matsumura2022} where Matsumura established the structure theorem for projective manifolds with $\mathrm{H}\geq0$ by studying the structure of MRC fibration. The first difficulty lies in constructing suitable MRC fibration (without indeterminacy locus) for non-project K\"ahler manifolds (see \cite[Theorem 1.5 and Section 5]{Matsumura2022}). Even if the background manifold is projective, there is another difficulty. One key step of \cite{Matsumura2022} is to show the tangent vector in the horizontal direction (of the MRC fibration) is truly flat, and then the tangent bundle admits the holomorphic orthogonal splitting. Since $\alpha\neq0$, our situation ($\mathcal{C}_{\a,\b}\geq0$) is quite different from $\mathrm{H}\geq0$ ($\mathcal{C}_{0,1}\geq0$). Roughly speaking, when $\alpha\neq0$, for the tangent vector $X$ in the horizontal direction, due to the existence of $\Ric(X,\bar X)$ in $\mathcal{C}_{\a,\b}(X)$, the analogous argument of \cite{Matsumura2022} seems not work here.

In \cite{Ni2021b}, Ni showed that a compact \K manifold with $\Ric_k>0$ has vanishing Hodge number $h^{p,0}$ for all $1\leq p\leq n$. By adapting the conformal perturbation method mentioned above, we are able to extend Ni's vanishing theorem to quasi-positive $\Ric_k$ curvature for $2\leq k\leq n$. More generally, we prove the following:
\begin{thm}\label{thm:quasi-positive-mixed}
Suppose $(M^n,g)$ is a compact \K manifold with quasi-positive $\mathcal{C}_{\a,\b}$ where $\a>0$ and $\a+\b>0$. Then the Hodge number $h^{p,0}(M)$ vanishes for all $1\leq p\leq n$. Moreover, $M$ is projective and simply connected.
\end{thm}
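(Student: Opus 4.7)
The plan is to prove the Hodge number vanishing $h^{p,0}(M)=0$ first; once this holds, the remaining assertions follow from standard results. Projectivity comes from $h^{2,0}(M)=0$ via Kodaira's criterion: the Hodge decomposition gives $H^2(M,\mathbb R)=H^{1,1}(M)$, so any K\"ahler class is of type $(1,1)$ and can be approximated by rational $(1,1)$-classes, producing an ample line bundle. Simple connectedness follows by combining this projectivity with Tang's recent theorem \cite{Tang2024}, which asserts that compact \K manifolds with quasi-positive $\mathcal C_{\alpha,\beta}$ are rationally connected, together with Campana's classical result that smooth projective rationally connected complex varieties have trivial $\pi_1$.

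To establish $h^{p,0}(M)=0$, fix $p\in\{1,\dots,n\}$ and let $\omega$ be a holomorphic $(p,0)$-form; the aim is to show $\omega\equiv 0$. Since holomorphicity depends only on the complex structure, $\omega$ remains holomorphic on any conformal Hermitian deformation $\hat g_\ve=e^{-2\ve\vp}g$. I would adapt the conformal-perturbation idea from the splitting theorem of Section~\ref{sec:splitting}: choose $p_0\in M$ with $\mathcal C_{\alpha,\beta,g}(X)>0$ for all unit $X\in T_{p_0}^{1,0}M$, and a smooth nonnegative $\vp$ supported in a small neighborhood $U\ni p_0$ with $\vp(p_0)=0$ and $\ddbar\vp|_{p_0}$ positive-definite. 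A Chern-curvature computation under the conformal change yields a schematic expansion
\[
\hat{\mathcal C}_{\alpha,\beta,\hat g_\ve}(X)=\mathcal C_{\alpha,\beta,g}(X)+\ve\,F(X;\alpha,\beta,\vp)+O(\ve^2),
\]
with $F$ linear in $\ddbar\vp$. The hypotheses $\alpha>0$ and $\alpha+\beta>0$ should allow one to pick $\vp$ so that $F\ge 0$ on $U$ with strict positivity at $p_0$ and $F\equiv 0$ outside $\operatorname{supp}\vp$; then for $\ve$ small, $\hat{\mathcal C}_{\alpha,\beta,\hat g_\ve}\ge 0$ on $M$ and is strictly positive on an open set around $p_0$.

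Next I would apply the Bochner identity for $\omega$ on the non-K\"ahler Hermitian manifold $(M,\hat g_\ve)$, computed with the Chern connection. On a K\"ahler background, the Bochner curvature term for a holomorphic $(p,0)$-form is a symmetric trace of the Riemann tensor bounded from below by a trace-type average of $\mathcal C_{\alpha,\beta}$; this is the linear-algebra mechanism underpinning Ni's strict-positivity vanishing theorem \cite{Ni2021a,Ni2021b}. Under the non-K\"ahler perturbation, Chern-torsion terms appear, but since $g$ itself is K\"ahler they are uniformly $O(\ve)$. Integrating the identity over $M$ against $dV_{\hat g_\ve}$ and using the pointwise sign of $\hat{\mathcal C}_{\alpha,\beta,\hat g_\ve}$, one extracts an inequality of the form
\[
0=\int_M|\nabla^{\mathrm{Ch}}\omega|^2\,dV_{\hat g_\ve}+\int_M\langle\mathcal R_p\omega,\omega\rangle\,dV_{\hat g_\ve}+O(\ve),
\]
in which the second integrand is nonnegative on $M$ with strict positivity on an open neighborhood of $p_0$. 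Letting $\ve\to 0^+$ and applying unique continuation for holomorphic forms then forces $\omega\equiv 0$.

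The principal obstacle is this last step: on a non-K\"ahler Hermitian manifold one must identify the explicit Bochner operator $\mathcal R_p$ acting on holomorphic $(p,0)$-forms, verify the pointwise trace-type inequality relating $\mathcal R_p$ to $\mathcal C_{\alpha,\beta}$ (where the numerical hypotheses $\alpha>0$, $\alpha+\beta>0$ enter essentially), and control the Chern-torsion corrections so that they are genuinely $O(\ve)$-negligible rather than destroying the strict-positivity gain. A secondary subtlety is that the various natural Chern Ricci curvatures, which all coincide with the Levi--Civita Ricci on a K\"ahler manifold, diverge once K\"ahlerity is lost, so one must carefully track which Ricci-type quantity actually appears in $\hat{\mathcal C}_{\alpha,\beta,\hat g_\ve}$.
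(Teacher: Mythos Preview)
Your outline has a genuine gap at exactly the point you flag as the principal obstacle, and it is not a technicality but a structural failure of the integrated-Bochner strategy. In a unitary frame where the form is decomposable as $f\,dz^1\wedge\cdots\wedge dz^p$, the Bochner identity \eqref{eqn:Bochner} gives $(\ddb|\omega|^2)_{v\bar v}=|\nabla_v\omega|^2+|\omega|^2\sum_{i\le p}R_{i\bar iv\bar v}$; tracing over $v$ produces the curvature term $|\omega|^2\sum_{i\le p}R_{i\bar i}$, a partial Ricci sum that is \emph{not} controlled pointwise by $\mathcal C_{\alpha,\beta}\ge 0$ once $\beta\neq 0$. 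What Berger's averaging actually delivers is \eqref{eqn:average-trick}: a \emph{weighted} trace of the curvature is $\ge 0$, with weight $\alpha$ on directions $e_j\notin\Sigma$ and weight $\alpha+\tfrac{2\beta}{p+1}$ on $e_j\in\Sigma$, where $\Sigma$ is the $p$-plane of $\omega$ at that point. Pushed back through Bochner this becomes a weighted trace of $\ddb|\omega|^2$ whose weights depend on $x$ through $\Sigma_x$; such an expression does not integrate to zero over $M$, and no contradiction follows. Ni's mechanism in \cite{Ni2021b} is not an integrated Bochner inequality at all: it is a pointwise second-order test at the maximum of the co-mass $\|\omega\|_{0}$, where the full Hessian of $\log\|\omega\|_0^2$ is $\le 0$ and hence \emph{every} nonnegatively-weighted trace of it is $\le 0$, which is what lets the averaging inequality bite. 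Your conformal perturbation does not repair this: if the claimed pointwise $\mathcal R_p$--$\mathcal C_{\alpha,\beta}$ comparison held you would need no perturbation (apply it on the K\"ahler metric $g$, which already has $\mathcal C_{\alpha,\beta}\ge 0$ everywhere and $>0$ near $p_0$), and a bump $\vp$ supported near $p_0$ changes nothing in the region where $\mathcal C_{\alpha,\beta,g}=0$, which is precisely where the sign problem lives.

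The paper keeps the co-mass maximum principle and uses conformal perturbation for a different purpose: to \emph{relocate} the maximum. For each target $y$ one builds $\hat g_\ve=e^{\ve\psi}g$ from a radial bump $\psi$ so that the twisted first-order condition of Lemma~\ref{lma:contra-vanishing-twisted} becomes strictly negative on an annulus (see \eqref{eqn:cont-mu}), forcing the $\hat g_\ve$-maximum of the co-mass into a smaller ball; iterating (Claims~\ref{claim:induc-vanishing}--\ref{claim:constant-comass}) shows $\|\omega\|_{0,g}$ is constant on $M$, after which Lemma~\ref{lma:contra-vanishing-twisted} with $\psi=0$ at the quasi-positive point yields the contradiction. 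For simple connectedness your route via Tang~\cite{Tang2024} plus Campana is valid; the paper instead uses its splitting theorem (Theorem~\ref{thm:splitting}) to show $\tilde M$ is compact and then Kobayashi's Euler-characteristic comparison~\cite{Kobayashi1961}, keeping the argument self-contained.
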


We note that the projectivity and rational connectedness (implies simply connectedness) were already proved by Tang \cite{Tang2024} under quasi-positive curvature condition, based on different method. 

\bigskip

The rest of the paper is organized as follows: In Section \ref{sec:mixed curvature}, we will generalize the notion of mixed curvature to Hermitian manifolds. In Section \ref{sec:geodesic line}, we will investigate the geodesic line under $\mathcal{C}_{\a,\b}\geq0$ and derive some Laplacian comparison type estimate. In Section \ref{sec:splitting}, we will prove the splitting theorem under $\mathcal{C}_{\a,\b}\geq0$, which is an extension of Cheeger-Gromoll's splitting theorem \cite{CheegerGromoll1971} in the \K case. In Section \ref{sec:vanishing}, we will prove some vanishing results under quasi-positive $\mathcal{C}_{\a,\b}$ or $\mathcal{C}_{\a,\b}\geq0$ and special holonomy group. In Section \ref{sec:proof and applications}, we will prove Theorem \ref{thm:structure} and \ref{thm:quasi-positive-mixed}, and apply them to \K manifolds with various known curvature conditions. In Appendix \ref{sec:conformal calculation}, we will collect some known formulas in the conformal geometry.

\bigskip

{\it Acknowledgement:} The authors would like to thank Valentino Tosatti for useful discussion. The first-named author was partially supported by National Key R\&D Program of China 2023YFA1009900 and NSFC grant 12271008. The second-named author was partially supported by Hong Kong RGC grant (Early Career Scheme) of Hong Kong No. 24304222, No. 14300623, and a NSFC grant No. 12222122. The third-named author was partially supported by National Key R\&D Program of China with grant no. 2023YFA1009900 as well as the startup fund from Westlake University.

\section{Mixed curvature for non-K\"ahler manifolds}\label{sec:mixed curvature}

In this section, we will extend the notion of mixed curvature to general Hermitian manifolds, especially to non-K\"ahler manifolds. We start with recalling some definitions in Hermitian geometry. Let $(M,g,J)$ be a Riemannian manifold with a complex structure $J$. The triple $(M,g,J)$ is said to be Hermitian if $g(Jv,Jw) = g(v,w)$ for all $v,w\in TM$. In this case, we might define the $2$-form $\omega$ by
\[
\omega(v,w) = g(Jv,w) \ \ \text{for $v,w\in TM$}.
\]
Using $J$, the complexified tangent space $T_{\mathbb{C}}M$ can be split as $TM=T^{1,0}M\oplus T^{0,1}M$, where $T^{1,0}M$ and $T^{0,1}M$ are eigenspaces of $J$ corresponding to eigenvalues $\sqrt{-1}$ and $\sqrt{-1}$ respectively. For any unitary $(1,0)$-vector $X$, $v=\frac{1}{\sqrt{2}}(X+\bar{X})$ is a unit vector such that
\[
X = \frac{1}{\sqrt{2}}(v-\sqrt{-1}Jv).
\]
The triple $(M,g,J)$ is said to be K\"ahler if $\omega$ is $d$-closed. An equivalent characterization of the K\"ahler condition is that the complex structure $J$ is parallel with respect to the Levi-Civita connection $\nabla$ of $g$ (i.e. $\nabla J=0$).

When $(M,g,J)$ is K\"ahler, let us re-formulate $\mathcal{C}_{\a,\b}$ in term of Riemannian geometry. Using $\nabla J=0$, direct calculation shows
\[
\mathcal{C}_{\alpha,\beta}(X) = \alpha\Ric(v,v)+\beta R(v,Jv,Jv,v).
\]
For convenience, we  write
\[
\mathcal{C}_{\alpha,\beta}(v) = \alpha\Ric(v,v)+\beta R(v,Jv,Jv,v)
\]
for $v\in TM$ with $|v|=1$. Such formulation can be extended directly to general Hermitian manifolds.

\begin{defn}\label{def:c a b Hermitian}
Let $(M,g,J)$ be a Hermitian manifold. Define
\[
\mathcal{C}_{\alpha,\beta}(v) = \alpha\Ric(v,v)+\beta R(v,Jv,Jv,v)
\]
for $v\in TM$ with $|v|=1$, where $\Ric(v,v)$ and $R(v,Jv,Jv,v)$ denote the Ricci and sectional curvature of the Levi-Civita connection. We say that $\mathcal{C}_{\a,\b}\geq 0$ (resp. $\mathcal{C}_{\a,\b}>0$) if $\mathcal{C}_{\a,\b,g}(v)\geq 0$ (resp. $\mathcal{C}_{\a,\b,g}(v)>0$) for all $x\in M$ and unit $v\in T_xM$.
\end{defn}

\begin{rem}
In the above definition, the Ricci and sectional curvature are of the Levi-Civita connection. This is different from \cite[Section 6]{ChuLeeTam2022} where the Chern connection is considered instead. When $(M,g,J)$ is K\"ahler, these two definitions are identical, since the Levi-Civita connection coincides with the Chern connection in this case.
\end{rem}

\section{Geodesic line}\label{sec:geodesic line}

In this section, we will show $\Ric(\gamma',\gamma')=0$ for any geodesic line $\gamma$ on K\"ahler manifolds with $\mathcal{C}_{\a,\b}\geq 0$, and then derive some Laplacian comparison type estimate along $\gamma$. These play an important role in the proof of splitting theorem (Theorem \ref{thm:splitting}), especially verifying the harmonicity of the Busemann functions. Throughout this paper, all geodesics are parametrized by arc-length for convenience.

\begin{prop}\label{prop:geodesic line curvature}
Let $(M^{n},g,J)$ be a complete K\"ahler manifold with $\mathcal{C}_{\alpha,\beta}\geq0$ where $\alpha>0$ and $\alpha+\beta> 0$, and $\gamma:(-\infty,+\infty)\to M$ be a geodesic line.
Then $R(v,\gamma',\gamma',v)=0$ for any $v\perp\gamma'$. In particular, $\Ric(\gamma',\gamma')=0$ along $\gamma$.
\end{prop}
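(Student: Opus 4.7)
The plan is to exploit the second variation of arc-length along $\ga$, using that every finite segment of a geodesic line is length-minimizing. Fix a parallel $J$-invariant orthonormal frame $\{e_1=\ga',\,e_2=J\ga',\,e_3,\,e_4=Je_3,\,\ldots,\,e_{2n}\}$ along $\ga$; this exists because $\nabla J=0$ on a K\"ahler manifold. For each $j\geq 2$ and each compactly supported $\phi:\mathbb{R}\to\mathbb{R}$, the fixed-endpoint normal variation $\ga_{s,j}(t)=\exp_{\ga(t)}\bigl(s\phi(t)e_j(t)\bigr)$ gives $L''_j(0)\geq 0$, which simplifies (as $e_j$ is parallel and unit) to
\[
\int_{\mathbb{R}}\bigl((\phi')^2-R(e_j,\ga',\ga',e_j)\phi^2\bigr)\,dt \;\geq\; 0.
\]

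Next I would take a positive-weighted combination of these inequalities with $w_2=\a+\b$ and $w_j=\a$ for $j\geq 3$. Both weights are strictly positive under the hypotheses $\a>0$ and $\a+\b>0$, and they are arranged so that $\sum_{j\geq 2} w_j R(e_j,\ga',\ga',e_j) \;=\; \a\,\Ric(\ga',\ga')+\b\,R(\ga',J\ga',J\ga',\ga') \;=\; \mathcal{C}_{\a,\b}(\ga')$. Substituting the tent function $\phi_L(t)=\max(1-|t|/L,\,0)$, which satisfies $\int(\phi_L')^2\,dt=2/L$ while $\phi_L^2\geq(1-T/L)^2$ on $[-T,T]$, yields
\[
(1-T/L)^2\int_{-T}^{T}\mathcal{C}_{\a,\b}(\ga')\,dt \;\leq\; \bigl((2n-1)\a+\b\bigr)\cdot\frac{2}{L}.
\]
Letting $L\to\infty$ together with the pointwise bound $\mathcal{C}_{\a,\b}(\ga')\geq 0$ forces $\mathcal{C}_{\a,\b}(\ga'(t))\equiv 0$ along $\ga$.

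The remaining and most delicate step is to upgrade this scalar identity to the tensorial vanishing $R(v,\ga',\ga',v)=0$ for every $v\perp\ga'$. My strategy is to note that at each point $\ga(t)$ the vector $\ga'(t)$ is an interior minimizer of the nonnegative function $v\mapsto\mathcal{C}_{\a,\b}(v)$ on the unit sphere of $T_{\ga(t)}M$. Rewriting the first- and second-order optimality conditions in complex notation and invoking the K\"ahler identity $R(A,\bar B,JC,JD)=R(A,\bar B,C,D)$ produces an algebraic identity coupling $\Ric(\ga',\cdot)$ to the bisectional curvature $R(\cdot,J\ga',J\ga',\ga')$. Combined with the individual second-variation bounds applied to each $e_j$ as well as to the combinations $e_j\pm e_k$ (the latter to capture the off-diagonal terms $R(e_j,\ga',\ga',e_k)$), one can then pin down $R(e_j,\ga',\ga',e_k)=0$ for all $j,k\geq 2$; the Ricci identity $\Ric(\ga',\ga')=0$ follows by tracing. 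The principal obstacle is precisely this upgrade: the tent-function argument only forces the specific weighted combination $\mathcal{C}_{\a,\b}(\ga')$ to vanish, whereas the proposition demands vanishing of the full Jacobi operator $R(\cdot,\ga',\ga',\cdot)$ on the normal bundle to $\ga$; bridging the gap requires essential use of the pointwise minimality of $\ga'$ together with the K\"ahler symmetries of $R$.
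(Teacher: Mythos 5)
You set up the second--variation functional and the weighted combination $(\a+\b)I(e_2)+\a\sum_{j\ge 3}I(e_j)$ correctly, and the tent-function argument does indeed give $\mathcal C_{\a,\b}(\ga')\equiv 0$ along $\ga$. But your proposed upgrade from this scalar identity to $R(v,\ga',\ga',v)=0$ is where the argument breaks. The pointwise statement ``$\ga'(t)$ minimizes $v\mapsto\mathcal C_{\a,\b}(v)$ on the unit sphere'' is a purely \emph{algebraic} constraint at a single tangent space, and its first- and second-order optimality conditions involve quantities like $\Ric(\ga',w)$, $\Ric(w,w)$, $R(\ga',J\ga',J\ga',w)$, $R(w,Jw,Jw,w)$, etc.\ --- the sectional curvature $R(w,\ga',\ga',w)$ of the Jacobi operator simply does not appear among the derivatives of $\mathcal C_{\a,\b}$ in the direction $w$. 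No amount of K\"ahler symmetry bookkeeping will conjure it up, because $\mathcal C_{\a,\b}(v)$ only ``sees'' curvature in the $J$-invariant plane of $v$ plus its Ricci trace, not the normal Jacobi operator of $\ga$.

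The missing idea is a two-step deduction that the paper makes explicit and that uses the global (not pointwise) structure. First, because each summand $I_\ell(\eta_\ell e_i)$ is individually $\ge 0$ by the second variation, and the positively weighted sum is $\le C/\ell$, one can peel off each term to get $0\le I_\ell(\eta_\ell e_i)\le C(n,\a,\b)/\ell$ for \emph{every} $i\ge 2$. Second, one perturbs within this family: for $\vp\in C_0^\infty([-1,1])$ and small $\tau$, the minimizing property gives $0\le I_\ell(\eta_\ell e_i+\tau\vp e_i)=I_1(e_i+\tau\vp e_i)+I_\ell(\eta_\ell e_i)-I_1(e_i)$, and sending $\ell\to\infty$ yields $I_1(e_i+\tau\vp e_i)\ge I_1(e_i)$ for all $\tau$, hence $\frac{d}{d\tau}\big|_{\tau=0}I_1(e_i+\tau\vp e_i)=-2\int_{-1}^1\vp\,R(e_i,\ga',\ga',e_i)\,dt=0$ for every test function $\vp$. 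This is what forces $R(e_i,\ga',\ga',e_i)\equiv 0$, and then polarization handles arbitrary $v\perp\ga'$. Your scalar conclusion $\mathcal C_{\a,\b}(\ga')=0$ is a genuine consequence of the argument but is weaker than what the functional inequalities actually deliver, and the optimality-of-$\ga'$ detour cannot recover what was discarded.
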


\begin{proof}
By re-parametrization, it suffices to prove the conclusion at $\gamma(0)$. For any $\ell>0$, since $\gamma$ is a geodesic line, then $\gamma|_{[-\ell,\ell]}$ is minimizing geodesic connecting $\gamma(-\ell)$ and $\gamma(\ell)$. Let $V(t)$ be a vector field along $\gamma|_{[-\ell,\ell]}$ such that
\begin{equation}\label{geodesic line curvature eqn 1}
V\perp\gamma', \ \ V(-\ell) = 0, \ \ V(\ell) = 0.
\end{equation}
Define
\[
\phi(t,s) = \mathrm{exp}_{\gamma(t)}(sV(t)), \ \
L(s) = \mathrm{Length}(\phi(\cdot,s)).
\]
Since $\gamma|_{[-\ell,\ell]}$ is minimizing, then $L(s)$ achieves its minimum at $s=0$. The second variation formula of arc length gives:
\[
0 \leq \frac{d^{2}}{ds^{2}}\bigg|_{s=0}L(s)
=  \int_{-\ell}^{\ell}\left(|\nabla V|^{2}-R(V,\gamma',\gamma',V)\right)dt.
\]
For convenience, we introduce the notation:
\[
I_{\ell}(V) = \int_{-\ell}^{\ell}\left(|\nabla V|^{2}-R(V,\gamma',\gamma',V)\right)dt,
\]
which is non-negative  for all $V$ satisfying \eqref{geodesic line curvature eqn 1}.

Let $\{e_{i}\}_{i=1}^{2n}$ be an orthonormal parallel vector fields along $\gamma$ with
\[
e_{1} = \gamma', \ \ e_{2} = J\gamma'.
\]
To prove Proposition \ref{prop:geodesic line curvature}, it suffices to show
\begin{equation}\label{geodesic line curvature goal}
R\big(e_{i}(0),\gamma'(0),\gamma'(0),e_{i}(0)\big) = 0 \ \ \text{for $i=2,\cdots,2n$}.
\end{equation}
For $\ell\geq2$, let $\eta_{\ell}:\mathbb{R}\to[0,+\infty)$ be an one-variable non-negative function such that
\begin{itemize}\setlength{\itemsep}{1mm}
\item $\eta_{\ell}(t)=1$ when $t\in[-1,1]$;
\item $\eta_{\ell}(t)=0$ when $t\in(-\infty,-\ell]\cup[\ell,+\infty)$;
\item $|\eta_{\ell}'(t)|\leq 2/\ell$.
\end{itemize}
Then we compute
\begin{equation}\label{geodesic line curvature eqn 2}
\begin{split}
& (\alpha+\beta)I_{\ell}(\eta_{\ell}e_{2})+\alpha\sum_{i=3}^{2n}I_{\ell}(\eta_{\ell}e_{i}) \\
= {} & \Big((\alpha+\beta)+(2n-2)\alpha\Big)\int_{-\ell}^{\ell}|\eta_{\ell}'(t)|^{2}dt \\
& -\int_{-\ell}^{\ell}\eta_{\ell}^{2}\Big((\alpha+\beta)R(J\gamma',\gamma',\gamma',J\gamma')+\alpha\sum_{i=3}^{2n}R(e_{i},\gamma',\gamma',e_{i})\Big)\,dt \\
\leq {} & C(n,\a,\b)\cdot\frac{8}{\ell}-\int_{-\ell}^{\ell}\eta_{\ell}^{2}\cdot\mathcal{C}_{\alpha,\beta}(\gamma')\,dt\\[2mm]
\leq {} & C(n,\a,\b)\ell^{-1},
\end{split}
\end{equation}
where we used $\mathcal{C}_{\alpha,\beta}\geq0$ in the last line. On the other hand, the vector field $\eta_{\ell}e_{i}$ ($i=2,\cdots,2n$) satisfies \eqref{geodesic line curvature eqn 1} and so
\begin{equation}\label{geodesic line curvature eqn 3}
I_{\ell}(\eta_{\ell}e_{i}) \geq 0 \ \ \text{for $i=2,\cdots,2n$}.
\end{equation}
Using \eqref{geodesic line curvature eqn 2}, \eqref{geodesic line curvature eqn 3}, $\alpha>0$ and $\alpha+\beta>0$, we conclude that
\begin{equation}\label{geodesic line curvature eqn 4}
0 \leq I_{\ell}(\eta_{\ell}e_{i}) \leq C(n,\a,\b)\ell^{-1} \ \ \text{for $i=2,\cdots,2n$}.
\end{equation}

For any $\varphi\in C_{0}^{\infty}([-1,1])$ and $\tau\in(-1,1)$, $\eta_{\ell}e_{i}+\tau\varphi e_{i}$ ($i=2,\cdots,2n$) is a vector field satisfying \eqref{geodesic line curvature eqn 1} and hence,
\[
\begin{split}
0 \leq {} & I_{\ell}(\eta_{\ell}e_{i}+\tau\vp e_{i}) \\
= {} & I_{1}(\eta_{l}e_{i}+\tau\vp e_{i})+I_{\ell}(\eta_{l}e_{i}+\tau\vp e_{i})-I_{1}(\eta_{l}e_{i}+\tau\vp e_{i}) \\
= {} & I_{1}(\eta_{l}e_{i}+\tau\vp e_{i})+I_{\ell}(\eta_{l}e_{i})-I_{1}(\eta_{l}e_{i}) \\
= {} & I_{1}(e_{i}+\tau\vp e_{i})+I_{\ell}(\eta_{\ell}e_{i})-I_{1}(e_{i}).
\end{split}
\]
Letting $\ell\to+\infty$ and using \eqref{geodesic line curvature eqn 4}, we obtain
\[
I_{1}(e_{i}+\tau\vp e_{i}) \geq I_{1}(e_{i}).
\]
This implies
\[
0 = \frac{d}{d\tau}\bigg|_{\tau=0}I_{1}(e_{i}+\tau\vp e_{i})
= -2\int_{-1}^{1}\vp R(e_{i},\gamma',\gamma',e_{i})dt
\]
for all $\vp\in C_{0}^{\infty}([-1,1])$. Then we obtain \eqref{geodesic line curvature goal}.
\end{proof}

As an immediate consequence, we have the following Laplacian comparison type estimate along the geodesic line.
\begin{cor}\label{Laplacian comparison}
Under the same assumptions of Proposition \ref{prop:geodesic line curvature}, for each $t\in\mathbb{R}$, $r(x)=d(x,\gamma(t))$ is a function on $M$ satisfying
\[
\Delta r \leq \frac{2n-1}{r} \ \ \text{for $x\in \gamma\setminus\{\gamma(t)\}$}
\]
whenever $r(\cdot)=d(\cdot,\gamma(t))$ is smooth.
\end{cor}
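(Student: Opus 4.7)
The plan is to apply the standard Laplacian comparison argument based on the index form, noting that when $x\in\gamma$, the minimizing geodesic from $\gamma(t)$ to $x$ is a segment of $\gamma$ itself, so Proposition \ref{prop:geodesic line curvature} forces the relevant Ricci term to vanish.

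More concretely, fix $t\in\mathbb{R}$ and $x=\gamma(s)\in\gamma\setminus\{\gamma(t)\}$ where $r(\cdot)=d(\cdot,\gamma(t))$ is smooth. Set $r=|s-t|$ and let $\sigma:[0,r]\to M$ be the minimizing geodesic from $\gamma(t)$ to $x$; by uniqueness of minimizers in the smooth locus of $r$, $\sigma$ is a reparametrized portion of $\gamma$, so $\sigma'(\tau)=\pm\gamma'(t\pm\tau)$. Choose an orthonormal parallel frame $\{E_i\}_{i=1}^{2n-1}$ along $\sigma$ with each $E_i\perp\sigma'$. Following the classical derivation, since $\nabla r(x)=\sigma'(r)$ and $r$ is smooth at $x$,
\[
\Delta r(x) \;=\; \sum_{i=1}^{2n-1}\mathrm{Hess}(r)(E_i(r),E_i(r)) \;=\; \sum_{i=1}^{2n-1} I_\sigma(J_i,J_i),
\]
where $J_i$ is the Jacobi field along $\sigma$ with $J_i(0)=0$, $J_i(r)=E_i(r)$, and $I_\sigma$ is the index form of $\sigma$.

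Next I would apply the index lemma with the test fields $V_i(\tau):=(\tau/r)E_i(\tau)$, which satisfy the boundary conditions $V_i(0)=0$, $V_i(r)=E_i(r)$. Since $\nabla_{\sigma'}V_i=(1/r)E_i$ and the $E_i$ are parallel and orthonormal, a direct computation gives
\[
\sum_{i=1}^{2n-1} I_\sigma(V_i,V_i) \;=\; \int_0^r\!\left(\frac{2n-1}{r^2}-\frac{\tau^2}{r^2}\sum_{i=1}^{2n-1}R(E_i,\sigma',\sigma',E_i)\right)d\tau.
\]
Because $\sigma'=\pm\gamma'$ along $\sigma$ and each $E_i\perp\sigma'$, Proposition \ref{prop:geodesic line curvature} yields $R(E_i,\sigma',\sigma',E_i)=0$ for every $i$ and every $\tau\in[0,r]$; in particular $\Ric(\sigma',\sigma')\equiv0$ along $\sigma$. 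Hence
\[
\Delta r(x) \;\leq\; \sum_{i=1}^{2n-1} I_\sigma(V_i,V_i) \;=\; \int_0^r\frac{2n-1}{r^2}\,d\tau \;=\; \frac{2n-1}{r},
\]
which is the desired inequality.

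There is no real obstacle here: the proof is the standard Laplacian comparison under $\Ric\geq0$ applied verbatim, and the only ingredient beyond classical Riemannian geometry is the vanishing $R(v,\gamma',\gamma',v)=0$ established in Proposition \ref{prop:geodesic line curvature}, which is used precisely to drop the curvature term from the index form computation along the line $\gamma$. I would just need to verify that the points of $\gamma\setminus\{\gamma(t)\}$ where $r$ is smooth are indeed the same set of points where the minimizing geodesic from $\gamma(t)$ is unique and coincides with a sub-arc of $\gamma$; this follows from standard properties of distance functions on complete manifolds.
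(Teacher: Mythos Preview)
Your proof is correct. Both your argument and the paper's are instances of the standard Laplacian comparison for $\Ric\geq 0$, with Proposition~\ref{prop:geodesic line curvature} supplying the vanishing of $\Ric(\sigma',\sigma')$ along the sub-arc $\sigma\subset\gamma$. The difference is purely in presentation: the paper uses the Bochner--Riccati route, writing
\[
0=\tfrac12\Delta|\nabla r|^2=|\nabla^2 r|^2+\tfrac{d}{dr}\Delta r+\Ric(\gamma',\gamma')\geq \tfrac{(\Delta r)^2}{2n-1}+\tfrac{d}{dr}\Delta r,
\]
and then integrates the resulting Riccati inequality with the asymptotic $\Delta r\sim (2n-1)/r$ near the pole; you instead use the index-form route, bounding $\Delta r=\sum I_\sigma(J_i,J_i)$ from above by $\sum I_\sigma(V_i,V_i)$ with the linear test fields $V_i=(\tau/r)E_i$ via the index lemma. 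Your approach is arguably more self-contained in that it avoids the ODE step and the initial condition, and it only needs the summed identity $\sum_i R(E_i,\sigma',\sigma',E_i)=\Ric(\sigma',\sigma')=0$, not the pointwise vanishing $R(E_i,\sigma',\sigma',E_i)=0$ you invoke. The one small caveat you flag yourself---that for $x\in\gamma$ in the smooth locus of $r$ the unique minimizer from $\gamma(t)$ is the sub-arc of $\gamma$---is indeed routine: $\gamma$ is a line, so every sub-arc is minimizing, and smoothness of $r$ at $x$ forces uniqueness.
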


\begin{proof}
The argument is standard. For the reader's convenience, we include a sketch here. By re-parametrization, we may assume that $t=0$ and so $r(x)=d(\cdot,\gamma(0))$. For $x\in\gamma\setminus\{\gamma(0)\}$, $\gamma$ is the minimizing geodesic connecting $\gamma(0)$ and $x$. By the Bochner formula, Proposition \ref{prop:geodesic line curvature} and $\dim_{\mathbb{R}}M=2n$,
\[
\begin{split}
0 = {} & \frac{1}{2}\Delta |\nabla r|^{2} \\[1mm]
= {} & |\nabla^{2}r|^{2}+g(\nabla\Delta r,\nabla r)+\Ric(\nabla r,\nabla r) \\
= {} & |\nabla^{2}r|^{2}+\frac{d}{dr}\Delta r+\Ric(\gamma',\gamma') \\
\geq {} & \frac{(\Delta r)^{2}}{2n-1}+\frac{d}{dr}\Delta r.
\end{split}
\]
Together with the initial condition $\lim_{r\to0}\Delta r^{2}=4n$, we are done.
\end{proof}

\section{Splitting theorem}\label{sec:splitting}

In this section, we aim to prove a splitting theorem for \K manifolds with $\mathcal{C}_{\a,\b}\geq0$, provided that it contains a geodesic line.  This is analogous to Cheeger-Gromoll's splitting theorem \cite{CheegerGromoll1971} in the purely Riemannian case with $\Ric\geq 0$.

\begin{thm}\label{thm:splitting}
Let $(M^{n},g,J)$ be a complete K\"ahler manifold with $\mathcal{C}_{\alpha,\beta}\geq0$ where $\alpha>0$ and $\alpha+\beta>0$. If there exists a geodesic line $\gamma_{0}:(-\infty,+\infty)\to M$, then $M$ splits isometrically as $N\times \mathbb{R}$ as a Riemannian manifold. In particular, its universal cover $\tilde M$ splits isometrically as $\tilde M'\times\mathbb{C}$.
\end{thm}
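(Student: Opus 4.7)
My approach is to follow the Cheeger-Gromoll scheme, adapted to the weaker mixed-curvature hypothesis. Associated to the given line $\gamma_{0}$, define the pair of Busemann functions
\[
b^{\pm}(x):=\lim_{t\to+\infty}\bigl(t-d(x,\gamma_{0}(\pm t))\bigr),
\]
which are $1$-Lipschitz and satisfy $b^{+}+b^{-}\leq 0$ by the triangle inequality, with equality along $\gamma_{0}$. The goal is to show that $b^{\pm}$ are smooth, harmonic, of unit gradient, and that $\nabla b^{+}$ is parallel. Once this is in place, the flow of $\nabla b^{+}$ gives an isometric splitting $M\cong N\times\mathbb{R}$ by the standard de Rham argument. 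Furthermore, since $\nabla J=0$ on a K\"ahler manifold, $J\nabla b^{+}$ is also parallel; on the simply connected cover $\tilde M$ the real $2$-plane spanned by $\nabla b^{+}$ and $J\nabla b^{+}$ integrates to a flat $\mathbb{C}$-factor, producing $\tilde M\cong\tilde M'\times\mathbb{C}$.

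The subtle point is to establish the subharmonicity $\Delta b^{\pm}\geq 0$ in the viscosity or distributional sense. The classical argument relies on the global Laplacian comparison $\Delta d(\cdot,p)\leq (n-1)/d$, but in our setting Corollary \ref{Laplacian comparison} provides such an estimate only at points lying on a geodesic line. So for each $x\in M$ I would want an auxiliary geodesic line $\gamma_{x}$ through $x$ that is co-asymptotic with $\gamma_{0}$. Along such a $\gamma_{x}$ the tangential Ricci vanishes by Proposition \ref{prop:geodesic line curvature}, and the Bochner-Riccati computation of Corollary \ref{Laplacian comparison} then reproduces the comparison estimate at $x$; feeding this into the lower barrier $y\mapsto b^{+}(x)+s-d(y,\gamma_{x}(s))$ yields the viscosity inequality $\Delta b^{+}\geq 0$ at $x$.

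Constructing $\gamma_{x}$ is the main obstacle, and this is exactly where I would deploy the conformal perturbation strategy outlined in the introduction. For fixed $x$, choose a cut-off $\varphi$ concentrated near $x$ and set $\hat g_{\varepsilon}:=e^{-2\varepsilon\varphi}g$; using the conformal curvature formulas collected in the Appendix, $\varphi$ can be designed so that $\mathcal{C}_{\alpha,\beta,\hat g_{\varepsilon}}>0$ on an annulus about $x$ while $\hat g_{\varepsilon}\equiv g$ outside. A quantitative barrier/Jacobi field argument, in the spirit of Liu and of Chodosh-Eichmair-Moraru, should then force every $\hat g_{\varepsilon}$-minimizing geodesic from $\gamma_{0}(-t_{k})$ to $\gamma_{0}(t_{k})$ to enter the inner ball around $x$. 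Letting $t_{k}\to\infty$ produces a $\hat g_{\varepsilon}$-geodesic line near $x$, and then $\varepsilon\to 0$ extracts a $g$-geodesic line through $x$ co-asymptotic with $\gamma_{0}$. In fact this procedure should yield a foliation of a neighbourhood of $\gamma_{0}$ by such lines; this is the technical heart of the argument.

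Once subharmonicity of $b^{\pm}$ is in hand, the strong maximum principle applied to $b^{+}+b^{-}\leq 0$, which attains $0$ along $\gamma_{0}$, forces $b^{+}\equiv-b^{-}$ on $M$. Both $b^{\pm}$ are therefore weakly harmonic, hence smooth by elliptic regularity, with $|\nabla b^{\pm}|\equiv 1$. The Bochner identity reads
\[
0=\tfrac{1}{2}\Delta|\nabla b^{+}|^{2}=|\nabla^{2}b^{+}|^{2}+\Ric(\nabla b^{+},\nabla b^{+})+\langle\nabla\Delta b^{+},\nabla b^{+}\rangle.
\]
Since the integral curves of $\nabla b^{+}$ are unit-speed geodesic lines, Proposition \ref{prop:geodesic line curvature} kills the Ricci term, while harmonicity kills the last term, leaving $|\nabla^{2}b^{+}|^{2}\equiv 0$. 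Thus $\nabla b^{+}$ is a parallel unit vector field on $M$, and the de Rham decomposition completes the proof.
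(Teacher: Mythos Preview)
Your outline matches the paper's strategy almost exactly: Busemann functions, the need for a co-asymptotic line through every point so that Corollary~\ref{Laplacian comparison} can feed the barrier argument, the conformal perturbation to manufacture such lines, and then the Bochner/de Rham conclusion. The sign convention for $b^{\pm}$ is opposite to the paper's $\mathcal B_{\pm}$, but that is harmless.

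The genuine gap is in how you globalize the line construction. You propose, for an arbitrary $x\in M$, to concentrate the cut-off $\varphi$ near $x$ and argue that $\hat g_{\varepsilon}$-minimizers between $\gamma_{0}(-t_{k})$ and $\gamma_{0}(t_{k})$ must enter the inner ball around $x$. This fails once $x$ is far from $\gamma_{0}$: the segment $\gamma_{0}|_{[-t_{k},t_{k}]}$ is itself a competitor, and since the perturbation is supported near $x$ (away from $\gamma_{0}$) its $\hat g_{\varepsilon}$-length is still $2t_{k}$, while any curve detouring near $x$ is strictly longer. So the conformal trick only produces a line \emph{near an existing line}, not through a prescribed far-away point. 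The paper's remedy is an iterative closest-line argument: Proposition~\ref{prop:technical proposition} is stated for a line $\gamma$ already in the good set $\mathbf\Gamma$ and a \emph{perpendicular} direction $\sigma$ at $\gamma(0)$; the perturbation is centered at $\sigma(r)$ (not at an arbitrary $x$), and the new line is shown to hit $\overline{B_{(1-\mu_{0})r}(\sigma(r))}$ for a fixed $\mu_{0}>0$. One then sets $i_{p}=\inf\{d(p,\gamma):\gamma\in\mathbf\Gamma\}$, realizes it by compactness, and if $i_{p}>0$ applies the proposition in the direction from the nearest line towards $p$ to get a strictly closer line---contradiction. This is the step your sketch is missing. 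A secondary point you should also supply is why the limit line inherits the co-asymptotic properties (your ``co-asymptotic with $\gamma_{0}$'', the paper's (b)--(c)); this is not automatic and requires tracking the $O(\varepsilon)$ length errors through the perturbation, as done in Step~5 of the proof of Proposition~\ref{prop:technical proposition}.
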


Following the spirit of Cheeger-Gromoll's splitting theorem \cite{CheegerGromoll1971}, it suffices to find a function $f$ on $M$ such that $|\nabla f|=1$ and $\nabla^2f\equiv 0$ by using the Busemann function. For the given geodesic line $\gamma_0:(-\infty,+\infty)\to M$, we define the associated Busemann function by
\begin{equation*}
\begin{cases}
\ \mathcal{B}_+(\cdot) = \displaystyle{\lim_{s\to +\infty}} \big(d_g(\cdot,\gamma_0(s))-s \big); \\[2mm]
\ \mathcal{B}_-(\cdot) = \displaystyle{\lim_{s\to +\infty}} \big(d_g(\cdot,\gamma_0(-s))-s \big).
\end{cases}
\end{equation*}
In the presence of $\Ric\geq 0$, it follows from the classical Laplacian comparison that $\mathcal{B}_\pm$ are both harmonic which serves as our desired function $f$. However, in our situation, the Laplacian comparison type estimate only holds along the geodesic line (see Corollary \ref{Laplacian comparison}). So we need the following geodesic line foliation lemma.

\begin{prop}\label{prop:technical proposition}
Under the same assumptions of Theorem \ref{thm:splitting}, suppose $\gamma:(-\infty,+\infty)\to M$ is a geodesic line (which is possibly different from $\gamma_0$) such that
\begin{enumerate}\setlength{\itemsep}{1mm}
\item[(i)] $\mathcal{B}_{+}+\mathcal{B}_{-}=0$ on $\gamma$;
\item[(ii)] $\mathcal{B}_{+}(\gamma(t_{1}))-\mathcal{B}_{+}(\gamma(t_{2}))=t_{2}-t_{1}$ for all $t_1,t_2\in\mathbb{R}$,
\end{enumerate}
and $\sigma:[0,+\infty)\to M$ be a geodesic such that
\[
\sigma(0) = \gamma(0), \ \ \sigma'(0)\perp\gamma'(0).
\]
Then there exist constants $\mu_{0},r_{0}\in(0,1)$ depending only on $n$, $\a$, $\b$ and the geometry at $\gamma(0)$ such that the following holds. For any $r<r_{0}$, there is a geodesic line $\hat{\gamma}_{r}$ such that
\begin{enumerate}\setlength{\itemsep}{1mm}
\item[(a)] $\hat{\gamma}_{r} \cap \overline{B_{(1-\mu_{0})r}(\sigma(r))}\neq\emptyset$;
\item[(b)] $\mathcal{B}_{+}+\mathcal{B}_{-}=0$ on $\hat{\gamma}_{r}$;
\item[(c)] $\mathcal{B}_{+}(\hat{\gamma}_{r}(t_{1}))-\mathcal{B}_{+}(\hat{\gamma}_{r}(t_{2}))=t_{2}-t_{1}$ for all $t_1,t_2\in \mathbb{R}$.
\end{enumerate}
\end{prop}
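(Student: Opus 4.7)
The plan is to implement the conformal perturbation scheme outlined in the introduction. Write $q:=\sigma(r)$. I will fix a non-negative cutoff $\varphi=\varphi_r\in C^\infty_0(M)$ supported in the outer ball $\overline{B_r(q)}$, strictly positive on an annular region $A_r\subset B_r(q)\setminus\overline{B_{(1-\mu_0)r}(q)}$, and with a strictly convex radial profile there; here $\mu_0\in(0,1)$ is a small constant to be chosen depending only on $n$, $\alpha$, $\beta$ and the local geometry of $g$ at $\gamma(0)$. I consider the (generally non-K\"ahler) Hermitian metric $\hat g_\varepsilon:=e^{-2\varepsilon\varphi}g$. Using the conformal transformation rules of Appendix \ref{sec:conformal calculation} together with the Hermitian extension of $\mathcal{C}_{\alpha,\beta}$ from Section \ref{sec:mixed curvature}, one expands
\[
\mathcal{C}_{\alpha,\beta,\hat g_\varepsilon}(v) = e^{2\varepsilon\varphi}\,\mathcal{C}_{\alpha,\beta,g}(v) + \varepsilon\,\mathcal{L}_{\alpha,\beta}\varphi(v) + O(\varepsilon^2),
\]
where $\mathcal{L}_{\alpha,\beta}$ is a second-order linear differential operator whose principal symbol is a positive combination of the Hessian of $\varphi$ in the radial direction because $\alpha>0$ and $\alpha+\beta>0$. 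For the chosen $\varphi$ and every sufficiently small $\varepsilon>0$, this makes $\mathcal{C}_{\alpha,\beta,\hat g_\varepsilon}>0$ on $A_r$, while $\hat g_\varepsilon=g$ outside $B_r(q)$.

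Next, for large $s>0$ I will pick endpoints $p_s^\pm$ designed to produce the line in the limit: take $p_s^\pm$ to be the time-$\pm s$ point of the $g$-geodesic starting from $q$ in the direction of the parallel transport of $\pm\gamma'(0)$ along $\sigma|_{[0,r]}$. Hypotheses (i) and (ii) together with $|\nabla\mathcal{B}_\pm|\leq 1$ then give $\mathcal{B}_\pm(p_s^\pm)=-s+O(r)$. Let $\tau_{\varepsilon,s}$ be an $\hat g_\varepsilon$-minimizing geodesic from $p_s^-$ to $p_s^+$. The heart of the argument is the geometric claim that $\tau_{\varepsilon,s}$ must intersect the inner ball $\overline{B_{(1-\mu_0)r}(q)}$. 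I would argue this by contradiction via a two-case dichotomy: either $\tau_{\varepsilon,s}$ avoids $B_r(q)$ altogether, in which case comparing its $\hat g_\varepsilon$-length (equal to its $g$-length) with an ansatz comparison curve passing through $q$ (which enjoys a strict conformal length saving of order $\varepsilon\int\varphi$) yields a contradiction provided $\mu_0$ is small enough relative to that saving; or $\tau_{\varepsilon,s}$ enters $B_r(q)$ but only in the annular shell $A_r\setminus\overline{B_{(1-\mu_0)r}(q)}$, in which case an Arzel\`a--Ascoli extraction as $s\to\infty$ produces an $\hat g_\varepsilon$-geodesic line whose trajectory lies in $A_r$, contradicting $\mathcal{C}_{\alpha,\beta,\hat g_\varepsilon}>0$ on $A_r$ via the Hermitian analogue of Proposition \ref{prop:geodesic line curvature}.

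Once the midpoint of $\tau_{\varepsilon,s}$ is known to sit in $\overline{B_{(1-\mu_0)r}(q)}$, a double Arzel\`a--Ascoli extraction (using Corollary \ref{Laplacian comparison} and the pointwise equivalence $e^{-2\varepsilon\sup\varphi}g\leq\hat g_\varepsilon\leq g$) will first produce, as $s\to\infty$, a complete $\hat g_\varepsilon$-geodesic line $\hat\gamma^\varepsilon$ meeting the inner ball, and then, as $\varepsilon\to 0$, a $g$-geodesic line $\hat\gamma_r$ still meeting $\overline{B_{(1-\mu_0)r}(q)}$; this is conclusion (a). For (b) and (c), passing the endpoint estimate $\mathcal{B}_\pm(p_s^\pm)=-s+O(r)$ and the triangle/minimizing identities along $\tau_{\varepsilon,s}$ to the double limit, and invoking the $1$-Lipschitz property of $\mathcal{B}_\pm$, forces $\mathcal{B}_++\mathcal{B}_-\equiv 0$ and the linear behaviour $\mathcal{B}_+(\hat\gamma_r(t_1))-\mathcal{B}_+(\hat\gamma_r(t_2))=t_2-t_1$ on $\hat\gamma_r$ parametrised by arclength. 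The hardest part I anticipate is the annulus-skimming exclusion in the second case of the previous paragraph: it depends on the Hermitian (non-K\"ahler) version of Proposition \ref{prop:geodesic line curvature}, plus a careful quantitative balance between the conformal length saving and the perpendicular displacement cost, both of which are sensitive to the profile of $\varphi$ I must design at the start.
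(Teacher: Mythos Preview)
Your overall scheme---conformal perturbation $\hat g_\varepsilon=e^{-2\varepsilon\varphi}g$, minimizing $\hat g_\varepsilon$-segments between far endpoints, double limit---matches the paper's. But two of your specific choices create real gaps.

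\textbf{Endpoints and properties (b), (c).} The paper does \emph{not} shoot geodesics from $q=\sigma(r)$ in parallel-transported directions; it takes the endpoints to be $\gamma(\pm\ell)$, i.e.\ points on the given line $\gamma$ itself. This is what makes (b) and (c) work: since $\gamma$ already satisfies (i)--(ii), the values $\mathcal{B}_\pm(\gamma(\pm\ell))$ are known exactly, and a length comparison between the $\hat g_\varepsilon$-minimizer $\hat\gamma_{\varepsilon,\ell}$ and $\gamma|_{[-\ell,\ell]}$ yields $\mathcal{B}_++\mathcal{B}_-=O(\varepsilon)$ and the linear law up to $O(\varepsilon)$ on $\hat\gamma_{\varepsilon,\ell}$; sending $\varepsilon\to 0$ gives (b), (c) exactly. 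With your $p_s^\pm$, the claim $\mathcal{B}_\pm(p_s^\pm)=-s+O(r)$ does not follow from (i), (ii) and $1$-Lipschitzness: those hypotheses control $\mathcal{B}_\pm$ only on $\gamma$, and a geodesic from $q$ in a parallel-transported direction has no a priori relation to the Busemann rays of $\gamma_0$. Even if the estimate held, an $O(r)$ error survives both limits $s\to\infty$ and $\varepsilon\to 0$ (since $r$ is fixed), so you would get only $\mathcal{B}_++\mathcal{B}_-=O(r)$ on $\hat\gamma_r$, not the required equality. Relatedly, the paper supports $\varphi$ in the \emph{larger} ball $B_{(1+\mu_0)r}(z)$, not in $B_r(q)$, precisely so that $\gamma(0)$ sits strictly inside the support and the comparison curve $\gamma|_{[-\ell,\ell]}$ enjoys a genuine conformal length saving.

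\textbf{The annulus exclusion.} Your plan---extract a $\hat g_\varepsilon$-line ``lying in $A_r$'' and invoke a Hermitian analogue of Proposition~\ref{prop:geodesic line curvature}---does not work as stated. The limiting line is unbounded, so it cannot lie in $A_r$; at best it grazes it. More importantly, the ``Hermitian analogue'' is not a black box: in the non-K\"ahler metric $\hat g_\varepsilon$ the field $J\hat\gamma'$ is not parallel, so the second variation acquires $|\hat\nabla J|^2$ error terms of order $\varepsilon^2\rho^{-4}e^{-2/\rho}$ which must be beaten by a \emph{quantitative} lower bound on $\mathcal{C}_{\alpha,\beta,\hat g_\varepsilon}$. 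The paper carries this out directly on the finite segment, in two stages: first (Step~3) a pure length comparison shows the segment penetrates to depth $(1+\mu_0-\delta_0)r$, guaranteeing that $\rho$ is bounded below on a sub-arc of definite length; then (Step~4) the second variation with the explicit bound $\mathcal{C}_{\alpha,\beta,\hat g_\varepsilon}\ge \varepsilon r^2\Lambda_0^{-1}\rho^{-4}e^{-1/\rho}$ forces a contradiction for $\ell$ large. Your sketch omits the penetration step; without it, a segment that merely touches the outer edge of $A_r$ gives no usable lower bound on $\int\mathcal{C}_{\alpha,\beta,\hat g_\varepsilon}(\hat\gamma')\,dt$, and the argument collapses.
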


We first prove Theorem~\ref{thm:splitting} assuming Proposition \ref{prop:technical proposition} holds.
\begin{proof}[Proof of Theorem~\ref{thm:splitting} assuming Proposition~\ref{prop:technical proposition}]

We first show that for any $p\in M$, there exists a geodesic line $\gamma_{p}$ passing through $p$ such that
\begin{enumerate}\setlength{\itemsep}{1mm}
\item[(i)] $\mathcal{B}_{+}+\mathcal{B}_{-}=0$ on $\gamma_p$;
\item[(ii)] $\mathcal{B}_{+}(\gamma_{p}(t_{1}))-\mathcal{B}_{+}(\gamma_{p}(t_{2}))=t_{2}-t_{1}$ for all $t_1,t_2\in \mathbb{R}$.
\end{enumerate}
Define
\[
\mathbf{\Gamma} = \{\gamma: \text{$\gamma$ is a geodesic line on $M$ such that (i) and (ii) hold} \}
\]
and
\[
i_{p} = \inf\{d_g(p,\gamma):\gamma\in \mathbf{\Gamma}\}.
\]
Since $\gamma_0\in\mathbf{\Gamma}$, then $\mathbf{\Gamma}$ is non-empty and $i_{p}$ is well-defined. By the compactness of geodesic line, there exists a geodesic line $\gamma\in\mathbf{\Gamma}$ such that $
d_g(p,\gamma) = i_{p}$. Suppose $i_p>0$, it follows from  Proposition \ref{prop:technical proposition} that there exists another geodesic line $\hat\gamma\in\mathbf{\Gamma}$ such that $d_g(p,\hat\gamma) < i_{p}$ which is impossible. Then we obtain $i_p=0$, which implies the existence of $\gamma_{p}$.

Next we show $\Delta \mathcal{B}_{\pm}\leq0$ in the barrier sense on $M$. For any $p$, by re-parameterization, we may assume that $\gamma_{p}(0)=p$. For $t>0$, define
\[
\begin{cases}
\ f_{+,t}(x) = \mathcal{B}_{+}(p)-t+d_g(x,\gamma_{p}(t)); \\[1mm]
\ f_{-,t}(x) = \mathcal{B}_{-}(p)-t+d_g(x,\gamma_{p}(-t)).
\end{cases}
\]
It is clear that
\begin{equation}\label{upper barrier eqn 1}
\begin{cases}
\ f_{+,t}(p) = \mathcal{B}_{+}(p); \\[1mm]
\ f_{-,t}(p) =  \mathcal{B}_{-}(p).
\end{cases}
\end{equation}
Since $\gamma_{p}$ satisfies (i) and (ii), together with the triangle inequality, we see that
\begin{equation}\label{upper barrier eqn 2}
\begin{cases}
\ f_{+,t}(x) =  \mathcal{B}_{+}(\gamma_{p}(t))+d_g(x,\gamma_{p}(t)) \geq  \mathcal{B}_{+}(x); \\[1mm]
\ f_{-,t}(x) = \mathcal{B}_{-}(\gamma_{p}(-t))+d_g(x,\gamma_{p}(-t)) \geq  \mathcal{B}_{-}(x).
\end{cases}
\end{equation}
The above show that $f_{\pm,t}$ is a upper barrier for $\mathcal{B}_{\pm}$ at $p$. Thanks to Corollary \ref{Laplacian comparison},
\[
\Delta f_{\pm,t}(p) \leq \frac{2n-1}{t}.
\]
Then for any $\e>0$, after choosing $t$ sufficiently large, we obtain $\Delta f_{\pm,t}(p)<\e$. This implies $\Delta \mathcal{B}_{\pm}\leq0$ in the barrier sense on $M$. Since $\mathcal{B}_{+}+\mathcal{B}_{-}$ is non-negative on $M$ (follows from the triangle inequality) and attains $0$ along $\gamma_p$, it then follows from the strong maximum principle that $\mathcal{B}_{+}+\mathcal{B}_{-}\equiv 0$ on $M$ and thus $\mathcal{B}_+$ is smoothly harmonic on $M$. Together with \eqref{upper barrier eqn 1} and \eqref{upper barrier eqn 2}, we have
\[
f_{+,t}(x) \geq \mathcal{B}_{+}(x) = -\mathcal{B}_{-}(x) \geq -f_{-,t}(x)
\]
and the equality holds when $x=p$. This means
\[
\nabla f_{+,t}(p) = \nabla \mathcal{B}_{+}(p) = -\nabla \mathcal{B}_{-}(p) = -\nabla f_{-,t}(p)
\]
so that $|\nabla \mathcal{B}_{+}|(p)=1$ as $\nabla \mathcal{B}_{+}(p)=\nabla f_{+,t}(p)=-\gamma_p'(0)$. Since $p$ is arbitrary,   $|\nabla \mathcal{B}_{+}|\equiv 1$ on $M$. Applying the Bochner formula with Proposition \ref{prop:geodesic line curvature} and evaluating at $p$, we see that
\[
\begin{split}
0 = {} & \frac{1}{2}\Delta |\nabla  \mathcal{B}_{+}|^{2} \\[0.5mm]
= {} & |\nabla^{2} \mathcal{B}_{+}|^{2}+g(\nabla\Delta \mathcal{B}_{+},\nabla  \mathcal{B}_{+})+\Ric(\nabla  \mathcal{B}_{+},\nabla  \mathcal{B}_{+}) \\[1mm]
= {} & |\nabla^{2} \mathcal{B}_{+}|^{2}+\Ric(\gamma_{p}',\gamma_{p}') = |\nabla^{2} \mathcal{B}_{+}|^{2}.
\end{split}
\]
It then follows that $\nabla^{2}\mathcal{B}_{+}=0$ at $p$. Since $p$ is arbitrary, $\nabla^{2} \mathcal{B}_{+}\equiv0$ on $M$. This implies that $M$ must split as $N\times \mathbb{R}_1$ isometrically.

Since $\nabla \mathcal{B}_+$ is a parallel vector field (corresponding the Euclidean factor $\mathbb{R}_1$), the K\"ahlerity of $(M,g)$ implies $J\nabla \mathcal{B}_+$ is a parallel vector field on $N$. It follows from the de Rham decomposition theorem \cite{deRham1952} that the universal cover of $N$ splits off another Euclidean factor $\mathbb{R}_2$. Hence, $\tilde M$ splits isometrically as $\tilde{M}'\times \mathbb{C}$ where $\mathbb{C}=\mathbb{R}_1\oplus\sqrt{-1}\mathbb{R}_2$.
\end{proof}

The rest of this section will be devoted to prove technical Proposition \ref{prop:technical proposition}. This is inspired by the ideas in Liu \cite{Liu2013} and Chodosh-Eichmair-Moraru \cite{ChodoshEichmairMoraru2019}.

\begin{proof}[Proof of Proposition~\ref{prop:technical proposition}]

We will split the proof into several steps. We first fix $r_0<1$ sufficiently small (depending on the geometry at $\gamma(0)$) so that for all $r<r_0$, the function $d^2_g\left(\cdot, \sigma(r)\right)$ is smooth and satisfies $|\nabla^2 d^2_g(\cdot,\sigma(r))|\leq C_n$ in $B_{g}(\sigma(r),2r)$. Denote $z_r:=\sigma(r)$ which is $r$-distance away from $\sigma(0)=\gamma(0)$. The constant $r$ will be fixed (but arbitrary) throughout the proof, for notational convenience we omit the index $r$ and write $z=z_{r}$. We will also use the following notation to denote the annulus:
\[
A_{g}(p,r_{1},r_{2}) := B_{g}(p,r_{2})\setminus\overline{B_{g}(p,r_{1})}.
\]

In the following argument, a constant $C$ is said to be uniform if $C=C(n,\alpha,\beta)$. We will use $O(T)$ to denote a term satisfying $|O(T)|\leq CT$ for some uniform constant $C$.

We will fix the value of constant $\mu_{0}(n,\alpha,\beta)$ (which is uniform) in Claim \ref{C a b}. Note that any constant $C(n,\alpha,\beta,\mu_{0})$ is also uniform.

\bigskip
\noindent
{\bf Step 1.} Construct conformal metric $\hat{g}_{\e}$ with positive $\mathcal{C}_{\a,\b,\hat g_{\e}}$ somewhere.
\bigskip

We define $\rho(\cdot)=(1+\mu_{0})^2r^2-d^2_g(\cdot, z)$ which is smooth in $B_g(z,(1+\mu_{0})r)$, and let $f$ be the function on $\mathbb{R}$ given by
\[
f(s) =
\begin{cases}
\ e^{-1/s}  & \mbox{for $s>0$};\\
\ 0  & \mbox{for $s\leq0$}.
\end{cases}
\]
For notational convenience, function in form of $e^{-1/\rho}\rho^{-m}$ at $\rho=0$ is understood to be $0$.  Define the function $\varphi=f\circ \rho$ and metric $\hat g_{\e}=e^{-2\e\varphi}g$ for $\e>0$. By $f(s)\leq s^{2}$, each $\hat g_{\e}$ satisfies
\begin{equation}\label{eqn:metric-compare}
e^{-2\e (1+\mu_{0})^2 r^2}g\leq \hat g_{\e}\leq g.
\end{equation}

We next consider the mixed curvature of $\hat{g}_{\ve}$.
\begin{claim}\label{C a b}
There exists uniform constants $\mu_{0}\in (0,1/8)$ and $\Lambda_{0}>1$ such that
\[
\mathcal{C}_{\a,\b,\hat g_{\e}}\geq \e r^{2}\Lambda_{0}^{-1}\rho^{-4}e^{-1/\rho} \ \
\text{on $M\setminus B_{g}(z,(1-\mu_{0})r)$}.
\]
\end{claim}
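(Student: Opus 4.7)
The plan is to use the conformal change formulas collected in Appendix \ref{sec:conformal calculation} to expand $\mathcal{C}_{\a,\b,\hat g_\e}(\hat v)$ for any $\hat g_\e$-unit vector $\hat v$. Setting $u=\e\vp$ and writing $v$ for the $g$-unit rescaling of $\hat v$, the transformation laws for the Levi-Civita Ricci and for the sectional curvature on the holomorphic plane $\{v,Jv\}$ produce a decomposition
\begin{equation*}
\mathcal{C}_{\a,\b,\hat g_\e}(\hat v) = e^{2\e\vp}\bigl\{\mathcal{C}_{\a,\b,g}(v)+\e\,\mathcal{L}(v)+\e^{2}\,\mathcal{Q}(v)\bigr\},
\end{equation*}
where $\mathcal{L}(v)$ is linear in $\nabla^{2}\vp$ and $\Delta\vp$, and $\mathcal{Q}(v)$ is quadratic in $\nabla\vp$. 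Using the assumption $\mathcal{C}_{\a,\b,g}\ge 0$, it suffices to establish the lower bound for $\e\mathcal{L}+\e^{2}\mathcal{Q}$ throughout the relevant annular region.

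Since $\vp=f\circ\rho$ with $\rho=(1+\mu_{0})^{2}r^{2}-d_{g}^{2}(\cdot,z)$ and $f(s)=e^{-1/s}$ on $(0,\infty)$, we have
\[
\nabla\vp=f'(\rho)\nabla\rho,\qquad \nabla^{2}\vp=f''(\rho)\,\nabla\rho\otimes\nabla\rho+f'(\rho)\,\nabla^{2}\rho,
\]
with $|\nabla\rho|^{2}=4d_{g}^{2}(\cdot,z)$ outside the cut locus and $|\nabla^{2}\rho|\le C_{n}$ in $B_{g}(z,2r)$ by our choice of $r_{0}$. A direct calculation gives $f'(s)=s^{-2}e^{-1/s}$ and $f''(s)=(s^{-4}-2s^{-3})e^{-1/s}$, so for $\mu_{0}<1/8$ we have $\rho\in(0,4\mu_{0}r^{2}]\subset(0,1/2)$ on the annulus $A_{g}(z,(1-\mu_{0})r,(1+\mu_{0})r)$. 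In particular $f''(\rho)>0$, and the rank-one tensor $f''(\rho)\,\nabla\rho\otimes\nabla\rho$ dominates both $f'(\rho)\,\nabla^{2}\rho$ (polynomially in $1/\rho$) and $(f'(\rho))^{2}\,\nabla\rho\otimes\nabla\rho$ (exponentially in $1/\rho$).

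The principal obstacle is to verify that the resulting leading form $\mathcal{L}(v)$ is uniformly positive on $g$-unit vectors. After substituting the dominant part of $\nabla^{2}\vp$, $\mathcal{L}(v)$ becomes a linear combination of $(v\cdot\nabla\rho)^{2}$, $(Jv\cdot\nabla\rho)^{2}$ and $|\nabla\rho|^{2}$, multiplied by $f''(\rho)$ with coefficients built from $\a,\b,n$. The trace term $\a\Delta\vp$ contributes uniformly in $|\nabla\rho|^{2}$, while the conformal variation of the sectional curvature contributes in the $v$ and $Jv$ directions separately. Together with $\a>0$ and $\a+\b>0$, these coefficients combine so that the quadratic form in $v$ has a strict positive minimum of size $c(n,\a,\b)|\nabla\rho|^{2}f''(\rho)\ge c'(n,\a,\b)\,r^{2}\rho^{-4}e^{-1/\rho}$. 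The hypothesis $\a+\b>0$ is used precisely to handle the direction $v\parallel J\nabla\rho$, where the Hessian contribution concentrates along $Jv$ and the positivity of the $Jv$-coefficient is delivered exactly by $\a+\b$.

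Granting this positivity, the subleading contributions are absorbed as follows. The term $f'(\rho)\nabla^{2}\rho$ inside $\mathcal{L}$ has size at most $C_{n}\rho^{-2}e^{-1/\rho}$, smaller than the dominant $r^{2}\rho^{-4}e^{-1/\rho}$ by the polynomial factor $(r/\rho)^{2}\ge 1/(16\mu_{0}^{2})$ (using $\rho\le 4\mu_{0}r^{2}$ and $r\le 1$); and $\e\mathcal{Q}(v)$ has size at most $\e r^{2}\rho^{-4}e^{-2/\rho}$, smaller by the exponential factor $\e e^{-1/\rho}$. Choosing $\mu_{0}=\mu_{0}(n,\a,\b)\in(0,1/8)$ small enough for both errors to be absorbed into half of the dominant term, and then setting $\Lambda_{0}=\Lambda_{0}(n,\a,\b)$ accordingly, we obtain the claimed bound on the annulus. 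On the complement of the support of $\vp$, where $\hat g_\e=g$ and $\rho\le 0$, the conclusion is trivial by our convention that $e^{-1/\rho}\rho^{-m}$ vanishes there.
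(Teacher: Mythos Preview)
Your proposal is correct and follows essentially the same route as the paper: expand $\mathcal{C}_{\a,\b,\hat g_\e}$ via the conformal formula in Lemma~\ref{Conformal lemma}(b), isolate the dominant rank-one contribution $f''(\rho)\nabla\rho\otimes\nabla\rho$, and verify that the resulting quadratic form $\a|\nabla\rho|^{2}+(2n\a-2\a+\b)(v\cdot\nabla\rho)^{2}+\b(Jv\cdot\nabla\rho)^{2}$ is bounded below by $c(n,\a,\b)|\nabla\rho|^{2}$, with the critical direction $v\parallel J\nabla\rho$ yielding exactly $(\a+\b)|\nabla\rho|^{2}$. The paper makes the same choices, differing only in that it parameterizes by $d_g$ rather than $\rho$ and handles the sign of $\b$ by an explicit case split rather than naming the worst direction.
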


\begin{proof}[Proof of Claim \ref{C a b}]
Outside $B_g( z, (1+\mu_{0})r)$, $\hat g_\e=g$ so that the conclusion holds trivially since $\mathcal{C}_{\a,\b,g}\geq 0$. Hence it suffices to consider the annulus $A_g(z,(1-\mu_{0})r,(1+\mu_{0})r)$. Write $h=\e\varphi$ and so $\hat{g}_{\e}=e^{-2h}g$. It is clear that
\[
\begin{cases}
\ \nabla h= \e\nabla \varphi=\e \rho^{-2}e^{-1/\rho} \nabla \rho; \\[1mm]
\ \nabla^{2}h = \e(1-2\rho)\rho^{-4} e^{-1/\rho} \nabla\rho\otimes \nabla\rho+\e \rho^{-2}e^{-1/\rho} \nabla^2\rho.
\end{cases}
\]
Write $d_{g}=d_{g}(\cdot,z)$ and so $\rho=(1+\mu_{0})^2r^2-d_{g}^{2}$. On annulus $A_g(z,(1-\mu_{0})r,(1+\mu_{0})r)$, we have
\begin{equation}\label{d g rho bounds}
(1-\mu_{0})^{2}r^{2} \leq d_{g}^{2} \leq (1+\mu_{0})^{2}r^{2} \ \, \text{and} \ \,
0\leq\rho\leq 4\mu_{0} r^2.
\end{equation}
It then follows from $\mu_{0}\in(0,1/8)$ and $r\leq r_0<1$ that $1-2\rho>0$. We also have $|\nabla\rho|+|\nabla^{2}\rho|\leq C$, and then
\begin{equation}\label{eqn:rho-estim}
\begin{cases}
\ |\nabla h|^{2} \leq C\e^{2}\rho^{-4}e^{-2/\rho}; \\[1mm]
\ \nabla^{2}h = 4\e  d_{g}^{2}\cdot(1-2\rho)\rho^{-4} e^{-1/\rho}\nabla d_{g}\otimes \nabla d_{g}+O(\e \rho^{-2}e^{-1/\rho}).
\end{cases}
\end{equation}
Write $\hat{v}=e^{h}v$ for some $g$-unit vector $v$. Combining \eqref{eqn:rho-estim} with Lemma \ref{Conformal lemma} (b) and using $\mathcal{C}_{\a,\b,g}\geq 0$, we obtain
\[
\begin{split}
& \mathcal{C}_{\alpha,\beta,\hat{g}_{\e}}(\hat{v}) \\[1mm]
\geq {} & e^{2h}\Big( \alpha\Delta h+(2n\alpha-2\alpha+\beta)\nabla^{2}h(v,v)+\beta\nabla^{2}h(Jv,Jv)-C|\nabla h|^{2} \Big) \\[1mm]
\geq {} & e^{2h}\Big( 4\ve d_{g}^{2}\cdot(1-2\rho)\rho^{-4}e^{-1/\rho}\big(\alpha+(2n\alpha-2\alpha+\beta)v(d_{g})^{2}+\beta Jv(d_{g})^{2}\big) \\
& \quad \quad -C\ve\rho^{-2}e^{-1/\rho}-C\ve^{2}\rho^{-4}e^{-2/\rho} \Big) \\
\geq {} & e^{2h}\Big( 4\ve d_{g}^{2}\cdot(1-2\rho)\rho^{-4}e^{-1/\rho}\big(\alpha+\beta Jv(d_{g})^{2}\big)-C\ve\rho^{-2}e^{-1/\rho} \Big),
\end{split}
\]
where we used $1-2\rho>0$ and $2n\alpha-2\alpha+\beta>0$ (follows from $n\geq2$, $\alpha>0$ and $\alpha+\beta>0$) in the last line.
If $\beta\geq0$, then $\alpha+\beta Jv(d_{g})^{2}\geq\alpha>0$. If $\beta<0$, then
\[
\alpha+\beta Jv(d_{g})^{2}
= (\alpha+\beta)-\beta\big(1-Jv(d_{g})^{2}\big) \geq \alpha+\beta > 0.
\]
In both cases, we have $\alpha+\beta Jv(d_{g})^{2}\geq C^{-1}$ and so
\[
\mathcal{C}_{\alpha,\beta,\hat{g}_{\e}}(\hat{v})
\geq  \ve e^{2h}\rho^{-4}e^{-1/\rho}\big( 4d_{g}^{2}\cdot(1-2\rho)\cdot C^{-1}-C\rho^{2}\big).
\]
Using \eqref{d g rho bounds} and choosing $\mu_{0}$ sufficiently small, we obtain
\[
\mathcal{C}_{\alpha,\beta,\hat{g}_{\e}}(\hat{v})
\geq \ve e^{2h}\rho^{-4}e^{-1/\rho}\big( C^{-1}r^{2}-C\mu_{0}^{2}r^{4}\big)
\geq \ve e^{2h}\rho^{-4}e^{-1/\rho}\cdot \frac{C^{-1}r^{2}}{2}.
\]
Combining this with $h=\e\varphi\geq0$, we complete the proof.
\end{proof}

\bigskip
\noindent
{\bf Step 2.} Construct geodesic lines $\hat \gamma_{\e,\ell}$ and $\hat \gamma$.
\bigskip

For any $\ell>1$, we let $\hat \gamma_{\e,\ell}$ be the $\hat g_\e$-minimizing geodesic connecting $\gamma(-\ell)$ and $\gamma(\ell)$. Then $\hat\gamma_{\e,\ell}$ must intersect $B_{g}(z,(1+\mu_{0})r)$. Indeed, if $\hat \gamma_{\e,\ell}$ does not intersect $B_{g}(z,(1+\mu_{0})r)$, using $\hat g_{\e}=g$ outside
$B_{g}(z,(1+\mu_{0})r)$, we see that
\[
\mathrm{Length}_{\hat g_\e}(\hat\gamma_{\e,\ell})
= \mathrm{Length}_{g}(\hat\gamma_{\e,\ell})
\geq d_g(\gamma(-\ell),\gamma(\ell))
= \mathrm{Length}_{g}(\gamma|_{[-\ell,\ell]}).
\]
However, $\gamma|_{[-\ell,\ell]}$ intersects the region where $\hat g_\e<g$, then
\[
\mathrm{Length}_{g}(\gamma|_{[-\ell,\ell]})
> \mathrm{Length}_{\hat g_\e}(\gamma|_{[-\ell,\ell]})
\geq d_{\hat g_{\e}}(\gamma(-\ell),\gamma(\ell))
= \mathrm{Length}_{\hat g_\e}(\hat\gamma_{\e,\ell}),
\]
which is a contradiction.

Suppose $\hat\gamma_{\e,\ell}$ is re-parametrized by arc-length, namely, $\hat\gamma_{\e,\ell}:[-\hat\ell,\hat \ell]\to M $ from $\hat\gamma_{\e,\ell}(-\hat\ell)=\gamma(-\ell)$ to $\hat\gamma_{\e,\ell}(\hat\ell)=\gamma(\ell)$. Then using \eqref{eqn:metric-compare} and $\hat g_\e=g$ outside $B_g(z,(1+\mu_{0})r)$, we see that
\begin{equation*}
\begin{split}
2\ell \geq  2\hat \ell = {} & \mathrm{Length}_{\hat g_\e}(\hat\gamma_{\e,\ell}) \\[1.5mm]
\geq {} & \mathrm{Length}_{ g}(\hat\gamma_{\e,\ell})
+\int_{\hat \gamma_{\e,\ell}\cap B_{g}(z,(1+\mu_{0})r)} (|\hat\gamma_{\e,\ell}'|_{\hat g_\e}-|\hat\gamma_{\e,\ell}'|_{g}) dt \\
\geq {} & 2\ell+\left(1-e^{\e(1+\mu_{0})^2r^2} \right)\cdot\int_{\hat \gamma_{\e,\ell}\cap B_{g}(z,(1+\mu_{0})r)} |\hat\gamma_{\e,\ell}'|_{\hat g_\e}\, dt \\
= {} & 2\ell-\left(e^{\e(1+\mu_{0})^2r^2}-1 \right)\cdot\mathrm{Length}_{\hat g_\e}\big(\hat \gamma_{\e,\ell}\cap B_{g}(z,(1+\mu_{0})r)\big) \\[1mm]
\geq {} & 2\ell-\left(e^{\e(1+\mu_{0})^2r^2}-1 \right)\cdot\mathrm{Diam}_{\hat{g}_{\ve}}\big(B_{g}(z,(1+\mu_{0})r)\big) \\[1mm]
\geq {} & 2\ell-\left(e^{\e(1+\mu_{0})^2r^2}-1 \right)\cdot\mathrm{Diam}_{g}\big(B_{g}(z,(1+\mu_{0})r)\big) \\[1mm]
\geq {} & 2\ell-\left(e^{\e(1+\mu_{0})^2r^2}-1 \right)\cdot 2(1+\mu_{0})r.
\end{split}
\end{equation*}
This shows $|\hat{\ell}-\ell|\leq C\e$ for some uniform constant $C$.

Since $\hat\gamma_{\e,\ell}$ intersects $B_{g}(z,(1+\mu_{0})r)$, then the compactness of geodesic implies $\hat\gamma_{\e,\ell}\to \hat \gamma_{\e}$ for some subsequence $\ell\to+\infty$ and $\hat g_\e$-geodesic line $\hat \gamma_{\e}$. Passing to a subsequence of $\e\to0$ again, we obtain a geodesic line $\hat\gamma=\lim_{\e\to 0}\hat\gamma_{\e}$ with respect to metric $g=\lim_{\e\to 0}\hat g_\e$. In the rest of the proof, we will show that the constructed geodesic $\hat \gamma$ satisfies the required properties (a)-(c) in Proposition \ref{prop:technical proposition}.

\bigskip
\noindent
{\bf Step 3.} $d_{g}(z,\hat{\gamma}_{\e,\ell})<(1+\mu_{0}-\delta_{0})r$ for some uniform constant $\delta_{0}>0$.
\bigskip

We will prove it by contradiction and specify our choice of $\delta_{0}$ later. Assume $d_{g}(z,\hat{\gamma}_{\e,\ell})\geq (1+\mu_{0}-\delta_{0})r$, then
\begin{equation}\label{eqn:rho-away}
\rho(\cdot)\leq \rho_1 := (1+\mu_{0})^2r^2-(1+\mu_{0}-\delta_{0})^2r^2 \ \ \text{on  $\hat \gamma_{\e,\ell}\cap B_g(z,(1+\mu_{0})r)$}.
\end{equation}
Since $\hat g_\e=g$ outside $B_g(z,(1+\mu_{0})r)$, we compute
\begin{equation}\label{eqn:length}
\begin{split}
& \mathrm{Length}_{\hat g_\e}(\hat \gamma_{\e,\ell}) \\[2mm]
= {} & \mathrm{Length}_{g}(\hat \gamma_{\e,\ell})
+\int_{\hat \gamma_{\e,\ell}\cap B_g(z,(1+\mu_{0})r)} (|\hat \gamma_{\e,\ell}'|_{\hat g_\e}-|\hat\gamma_{\e,\ell}'|_g)dt\\
\geq {} & d_g(\gamma(-\ell),\gamma(\ell))
+\int_{\hat \gamma_{\e,\ell}\cap B_g(z,(1+\mu_{0})r)} (|\hat \gamma_{\e,\ell}'|_{\hat g_\e}-|\hat\gamma_{\e,\ell}'|_g)dt\\
= {} & \mathrm{Length}_{g}( \gamma|_{[-\ell,\ell]})
+\int_{\hat \gamma_{\e,\ell}\cap B_g(z,(1+\mu_{0})r)} (|\hat \gamma_{\e,\ell}'|_{\hat g_\e}-|\hat\gamma_{\e,\ell}'|_g)dt.
\end{split}
\end{equation}
By interchanging the role of pairs $(\hat g_\e,\hat\gamma_{\e,\ell})$ and $(g,\gamma|_{[-\ell,\ell]})$,
\[
\mathrm{Length}_{g}( \gamma|_{[-\ell,\ell]})
\geq \mathrm{Length}_{\hat g_\e}(\hat \gamma_{\e,\ell})
+\int_{\gamma|_{[-\ell,\ell]}\cap B_g\left(z,(1+\mu_{0})r\right)}(|\gamma'|_{g}-|\gamma'|_{\hat g_\e})dt.
\]
Combining this with \eqref{eqn:length}, we conclude that
\begin{equation}\label{eqn:length-compare}
\int_{\hat \gamma_{\e,\ell}\cap B_g(z,(1+\mu_{0})r)} (|\hat\gamma_{\e,\ell}'|_g-|\hat \gamma_{\e,\ell}'|_{\hat g_\e})dt
\geq  \int_{ \gamma|_{[-\ell,\ell]}\cap B_g(z,(1+\mu_{0})r)}(|\gamma'|_{g}- |\gamma'|_{\hat g_\e})dt.
\end{equation}
We estimate the left-hand side of \eqref{eqn:length-compare} as follows. It follows from \eqref{eqn:rho-away} and $\hat g_\e=e^{-2\ve\vp}g\leq g$ that
\begin{equation}\label{eqn:length-compare LHS}
\begin{split}
& \int_{\hat \gamma_{\e,\ell}\cap B_g(z,(1+\mu_{0})r)} (|\hat\gamma_{\e,\ell}'|_g-|\hat \gamma_{\e,\ell}'|_{\hat g_\e})dt \\
= {} & \int_{\hat \gamma_{\e,\ell}\cap B_g(z,(1+\mu_{0})r)} (e^{\e \varphi}-1)|\hat \gamma_{\e,\ell}'|_{\hat g_\e}dt \\[3mm]
\leq {} & \left(e^{\e f(\rho_1)}-1\right)\cdot \mathrm{Length}_{\hat g_\e}\big(\hat \gamma_{\e,\ell}\cap B_g(z,(1+\mu_{0})r)\big) \\[3mm]
\leq {} & \left(e^{\e f(\rho_1)}-1\right)\cdot \mathrm{Diam}_{\hat{g}_{\ve}}\big(B_g(z,(1+\mu_{0})r)\big) \\[3mm]
\leq {} & \left(e^{\e f(\rho_1)}-1\right)\cdot \mathrm{Diam}_{g}\big(B_g(z,(1+\mu_{0})r)\big) \\[3mm]
\leq {} & \left(e^{\e f(\rho_1)}-1\right)\cdot 2(1+\mu_{0})r.
\end{split}
\end{equation}
On the other hand, by $d_{g}(\gamma(0),z)=r$, it is clear that $B_g(\gamma(0),\mu_{0} r/2)\subset B_g(z,(1+\mu_{0}/2)r)$ and
\[
\rho(\cdot) \geq \rho_{2} := (1+\mu_{0})^2r^2-(1+\mu_{0}/2)^2r^2 \ \ \text{on $B_g(\gamma(0),\mu_{0} r/2)$}.
\]
Then we estimate the right-hand side of \eqref{eqn:length-compare}:
\begin{equation}\label{eqn:length-compare RHS}
\begin{split}
& \int_{ \gamma|_{[-\ell,\ell]}\cap B_g(z,(1+\mu_{0})r)}(|\gamma'|_{g}- |\gamma'|_{\hat g_\e})dt \\
\geq {} & \int_{ \gamma|_{[-\ell,\ell]}\cap B_g(\gamma(0),\mu_{0}r/2)}(1-e^{-\e \varphi})|\gamma'|_{g}dt \\[1.5mm]
\geq {} & \left(1-e^{-\e f(\rho_{2})}\right)\cdot\mathrm{Length}_{g}\big(\gamma|_{[-\ell,\ell]}\cap B_g(\gamma(0), \mu_{0} r/2)\big) \\[3mm]
= {} & \left(1-e^{-\e f(\rho_{2})}\right)\cdot\mu_{0} r.
\end{split}
\end{equation}
Substituting \eqref{eqn:length-compare LHS} and \eqref{eqn:length-compare RHS} into \eqref{eqn:length-compare},
\begin{equation*}
\left(1-e^{-\e f(\rho_{2})}\right)\mu_{0} \leq 2(1+\mu_{0})\left(e^{\e f(\rho_1)}-1\right).
\end{equation*}
Using $1-e^{-x}\geq x/3$ and $e^{x}-1\leq 3x$ for $x\in[0,1]$, we obtain $f(\rho_{2})\mu_{0}\leq18(1+\mu_{0})f(\rho_{1})$. Together with the definitions of $f$, $\rho_{1}$ and $\rho_{2}$, we obtain
\[
-\frac{\mu_{0}}{(1+\mu_{0})^2r^2-(1+\mu_{0}/2)^2r^2}
\leq -\frac{18(1+\mu_{0})}{(1+\mu_{0})^2r^2-(1+\mu_{0}-\delta_{0})^2r^2},
\]
which is impossible if $\delta_{0}$ is sufficiently small depending only on $\mu_{0}$. This finishes the proof of {\bf Step 3}.

\medskip

Since $d_g(z,\hat \gamma_{\e,\ell})<(1+\mu_{0}-\delta_{0})r$, there exists $\hat a_{\e,\ell}$ such that $d_g(z,\hat \gamma_{\e,\ell})$ is realized at $\hat\gamma_{\e,\ell}(\hat a_{\e,\ell})$, i.e.
\[
d_{g}(z,\hat{\gamma}_{\e,\ell}(\hat{a}_{\e,\ell})) = d_g(z,\hat \gamma_{\e,\ell}) < (1+\mu_{0}-\delta_{0})r.
\]
For later use, we have the following estimate of $\hat{a}_{\e,\ell}$.

\begin{claim}\label{hat a claim}
$|\hat{a}_{\e,\ell}|\leq A_{0}$ for some uniform constant $A_{0}$.
\end{claim}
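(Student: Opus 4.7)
The plan is short: we will combine Step~3 with the fact that $\hat\gamma_{\e,\ell}$ is $\hat g_\e$-minimizing and the pointwise comparison $\hat g_\e\leq g$ recorded in \eqref{eqn:metric-compare}. The key point is that Step~3 already places $\hat\gamma_{\e,\ell}(\hat a_{\e,\ell})$ in a uniformly bounded $g$-neighborhood of $\gamma(0)$, and because the arc-length parameter of a $\hat g_\e$-minimizing geodesic equals the $\hat g_\e$-distance from either endpoint $\gamma(\pm\ell)$, this spatial bound translates directly into a bound on $\hat a_{\e,\ell}$ once we pass from $d_{\hat g_\e}$ to $d_g$.

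First, since $d_g(\gamma(0),z)=r$ and $d_g(z,\hat\gamma_{\e,\ell}(\hat a_{\e,\ell}))<(1+\mu_0-\delta_0)r$ by Step~3, the triangle inequality yields
\[
d_g\bigl(\gamma(0),\hat\gamma_{\e,\ell}(\hat a_{\e,\ell})\bigr)\leq r+(1+\mu_0-\delta_0)r\leq 3r_0.
\]
Next, since $\hat\gamma_{\e,\ell}\colon[-\hat\ell,\hat\ell]\to M$ is a $\hat g_\e$-minimizing geodesic parametrized by $\hat g_\e$-arc length with endpoints $\gamma(\pm\ell)$, every sub-arc is also minimizing, so $d_{\hat g_\e}(\gamma(\pm\ell),\hat\gamma_{\e,\ell}(\hat a_{\e,\ell}))=\hat\ell\mp\hat a_{\e,\ell}$. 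Combining this with $d_{\hat g_\e}\leq d_g$ (a consequence of $\hat g_\e\leq g$) and the triangle inequality along $\gamma$, we obtain
\[
\hat\ell\mp\hat a_{\e,\ell}\leq d_g\bigl(\gamma(\pm\ell),\gamma(0)\bigr)+d_g\bigl(\gamma(0),\hat\gamma_{\e,\ell}(\hat a_{\e,\ell})\bigr)\leq \ell+3r_0.
\]

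Rearranging gives $\pm\hat a_{\e,\ell}\leq \ell-\hat\ell+3r_0$, and Step~2 provides the uniform estimate $|\hat\ell-\ell|\leq C\e$. For $\e\in(0,1)$, the two one-sided bounds together produce $|\hat a_{\e,\ell}|\leq 3r_0+C$, which is the desired uniform bound $A_0$. The argument is essentially a packaging of facts already established in Steps~2 and~3, so I do not anticipate any serious technical obstacle here; the only thing to be careful about is that the parameter is $\hat g_\e$-arc length (not $g$-arc length), which is precisely why the step using $d_{\hat g_\e}\leq d_g$ is needed.
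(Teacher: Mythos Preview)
Your proof is correct and follows essentially the same approach as the paper: both arguments identify $\hat\ell\mp\hat a_{\e,\ell}$ with $d_{\hat g_\e}(\gamma(\pm\ell),\hat\gamma_{\e,\ell}(\hat a_{\e,\ell}))$, compare it to a $g$-distance, and then use the triangle inequality together with Step~3 and $|\hat\ell-\ell|\leq C\e$. Your version is in fact slightly cleaner, since by bounding $d_{\hat g_\e}\leq d_g$ from above and routing through $\gamma(0)$ you avoid the case split on the sign of $\hat a_{\e,\ell}$ and the separate lower bound on $d_{\hat g_\e}(\gamma(\pm\ell),z)$ that the paper uses.
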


\begin{proof}[Proof of Claim \ref{hat a claim}]
When $\hat{a}_{\e,\ell}\geq0$, by the triangle inequality, $\hat{g}_{\ve}\leq g$ and $\hat{\gamma}_{\e,\ell}(\hat{\ell})=\gamma_{\e,\ell}(\ell)$, we obtain
\[
\begin{split}
\hat{\ell}-\hat{a}_{\e,\ell} = {} & d_{\hat{g}_\e}(\hat{\gamma}_{\e,\ell}(\hat{\ell}),\hat{\gamma}_{\e,\ell}(\hat{a}_{\e,\ell})) \\
\geq {} & d_{\hat{g}_\e}(\hat{\gamma}_{\e,\ell}(\hat{\ell}),z)-d_{\hat{g}_\e}(z,\hat{\gamma}_{\e,\ell}(\hat{a}_{\e,\ell})) \\
\geq {} & d_{\hat{g}_\e}(\gamma(\ell),z)-d_{g}(z,\hat{\gamma}_{\e,\ell}(\hat{a}_{\e,\ell})) \\
> {} & d_{\hat{g}_\e}(\gamma(\ell),z)-(1+\mu_{0}-\delta_{0})r.
\end{split}
\]
Since $\hat{g}_{\e}=g$ outside $B_g(z,(1+\mu_{0})r)$ and $\gamma(0)\in B_g(z,(1+\mu_{0})r)$, then
\[
\begin{split}
d_{\hat{g}_{\e}}(\gamma(\ell),z)
\geq {} & d_{g}(\gamma(\ell),B_g(z,(1+\mu_{0})r)) \\
\geq {} & d_{g}(\gamma(\ell),\gamma(0))-2(1+\mu_{0})r \\
= {} & \ell-2(1+\mu_{0})r.
\end{split}
\]
Combining the above with $|\hat{\ell}-\ell|\leq C\e$ and $r\leq r_{0}<1$, we obtain $|\hat{a}_{\e,\ell}|\leq A_{0}$. When $\hat{a}_{\e,\ell}<0$, the similar argument shows
\[
\begin{split}
\hat{\ell}+\hat{a}_{\e,\ell} = {} & d_{\hat{g}_\e}(\hat{\gamma}_{\e,\ell}(-\hat{\ell}),\hat{\gamma}_{\e,\ell}(\hat{a}_{\e,\ell})) \\
\geq {} & d_{\hat{g}_\e}(\hat{\gamma}_{\e,\ell}(-\hat{\ell}),z)-d_{\hat{g}_\e}(z,\hat{\gamma}_{\e,\ell}(\hat{a}_{\e,\ell})) \\
> {} & d_{\hat{g}_\e}(\gamma(-\ell),z)-(1+\mu_{0}-\delta_{0})r \\
\geq {} & \ell-2(1+\mu_{0})r-(1+\mu_{0}-\delta_{0})r,
\end{split}
\]
which also implies $|\hat{a}_{\e,\ell}|\leq A_{0}$.
\end{proof}

\bigskip
\noindent
{\bf Step 4.} $d_g(z,\hat \gamma_{\e,\ell})<(1-\mu_{0})r$ for all sufficiently large $\ell$ (depending only on $\e$, $r$, $n$, $\alpha$ and $\beta$).
\bigskip

We prove it by contradiction again. Suppose $d_g(z,\hat \gamma_{\e,\ell})\geq (1-\mu_{0})r$. Then
\begin{equation}\label{upper bound of rho}
\rho(\cdot) \leq (1+\mu_{0})r^{2}-(1-\mu_{0})r^{2} = 4\mu_{0}r^{2} \ \ \text{on $\hat{\gamma}_{\e,\ell}$}
\end{equation}
and Claim \ref{C a b} implies
\begin{equation}\label{lower bound of C a b}
\mathcal{C}_{\a,\b,\hat g_{\e}}\geq \e r^{2}\Lambda_{0}^{-1}\rho^{-4}e^{-1/\rho} \ \ \text{on $\hat{\gamma}_{\e,\ell}$}.
\end{equation}

Denote the Levi-Civita connection and curvature tensor of $\hat{g}_\e$ by $\hat{\nabla}$ and $\hat{R}$.
\begin{claim}\label{vector fields claim}
There exist vector fields $\{\hat{e}_{3},\cdots,\hat{e}_{2n}\}$ along $\hat{\gamma}_{\e,\ell}$ such that
\begin{itemize}\setlength{\itemsep}{1mm}
\item $\{\hat{\gamma}',J\hat{\gamma}',\hat{e}_{3},\cdots,\hat{e}_{2n}\}$ are orthonormal;
\item $ |\hat{\nabla}\hat{e}_{i}|^{2}\leq C\e^{2}\rho^{-4}e^{-2/\rho}$ for $i=3,\cdots,2n$.
\end{itemize}
\end{claim}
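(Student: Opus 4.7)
The obstruction is that $\hat g_{\varepsilon} = e^{-2h} g$ with $h = \varepsilon \varphi$ is conformal but not K\"ahler with respect to $J$, so that $J$ is not parallel with respect to the Levi-Civita connection $\hat\nabla$ of $\hat g_{\varepsilon}$. Consequently, even though $\hat\gamma$ is a $\hat g_{\varepsilon}$-geodesic and $\hat\gamma'$ is $\hat\nabla$-parallel along it, $J\hat\gamma'$ is not. The plan is to first control the failure $\hat\nabla_{\hat\gamma'}(J\hat\gamma')$ pointwise, and then to generate the remaining frame $\{\hat e_3,\dots,\hat e_{2n}\}$ by a modified parallel transport that keeps vectors in $\{\hat\gamma',J\hat\gamma'\}^{\perp_{\hat g_{\varepsilon}}}$.

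Since $J$ is a $g$-isometry and $\hat g_{\varepsilon}$ is conformal to $g$, the vector $\hat e_2 := J\hat\gamma'$ is automatically $\hat g_{\varepsilon}$-unit and $\hat g_{\varepsilon}$-orthogonal to $\hat\gamma'$. Applying the standard conformal change formula from Appendix \ref{sec:conformal calculation},
$$
\hat\nabla_X Y = \nabla_X Y - X(h) Y - Y(h) X + g(X,Y)\,\nabla^g h,
$$
together with $\nabla J = 0$, $\hat\nabla_{\hat\gamma'}\hat\gamma' = 0$, and $g(\hat\gamma',J\hat\gamma')=0$, one computes
$$
\hat\nabla_{\hat\gamma'}(J\hat\gamma') \;=\; -(J\hat\gamma')(h)\,\hat\gamma' + \hat\gamma'(h)\,J\hat\gamma' - e^{2h} J\nabla^g h.
$$
Converting to $\hat g_{\varepsilon}$-norms via $|\cdot|_{\hat g_{\varepsilon}} = e^{-h}|\cdot|_g$ and using the pointwise estimate $|\nabla h|_g^2 \leq C\varepsilon^2 \rho^{-4} e^{-2/\rho}$ already obtained inside the proof of Claim \ref{C a b}, together with $h\geq 0$ and $e^h$ bounded, this yields
$$
\bigl|\hat\nabla_{\hat\gamma'}(J\hat\gamma')\bigr|_{\hat g_{\varepsilon}}^{2} \;\leq\; C\varepsilon^2\,\rho^{-4} e^{-2/\rho}.
$$

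For the remaining vectors, choose any $\hat g_{\varepsilon}$-orthonormal basis $\hat e_3(t_0),\dots,\hat e_{2n}(t_0)$ of the $\hat g_{\varepsilon}$-orthogonal complement of $\mathrm{span}(\hat\gamma'(t_0),J\hat\gamma'(t_0))$ at a convenient base point $t_0$ (e.g.\ $t_0=\hat\ell$, where $h=0$ and one can simply use the $g$-parallel frame from Proposition \ref{prop:geodesic line curvature}), and then propagate along $\hat\gamma_{\varepsilon,\ell}$ by the linear ODE
$$
\hat\nabla_{\hat\gamma'} \hat e_i \;=\; -\,\hat g_{\varepsilon}\bigl(\hat e_i,\,\hat\nabla_{\hat\gamma'}(J\hat\gamma')\bigr)\,J\hat\gamma'.
$$
A direct check shows this ODE preserves (i) $\hat g_{\varepsilon}$-orthogonality with $\hat\gamma'$ (because its right-hand side lies in $\mathrm{span}(J\hat\gamma')$, while $\hat g_{\varepsilon}(\hat\gamma',J\hat\gamma')=0$); (ii) $\hat g_{\varepsilon}$-orthogonality with $J\hat\gamma'$ (the two contributions to $\tfrac{d}{dt}\hat g_{\varepsilon}(\hat e_i, J\hat\gamma')$ cancel exactly by construction); and once (i)--(ii) hold, (iii) mutual $\hat g_{\varepsilon}$-orthonormality of the $\hat e_i$ is immediate since each $\hat\nabla_{\hat\gamma'}\hat e_i$ is $\hat g_{\varepsilon}$-orthogonal to every $\hat e_j$. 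The required bound then follows from Cauchy--Schwarz,
$$
|\hat\nabla \hat e_i|_{\hat g_{\varepsilon}}^{2} \;=\; \bigl|\hat g_{\varepsilon}(\hat e_i,\hat\nabla_{\hat\gamma'}(J\hat\gamma'))\bigr|^{2} \;\leq\; |\hat\nabla_{\hat\gamma'}(J\hat\gamma')|_{\hat g_{\varepsilon}}^{2} \;\leq\; C\varepsilon^2\,\rho^{-4}e^{-2/\rho}.
$$

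The main technical step is the computation and estimation of $\hat\nabla_{\hat\gamma'}(J\hat\gamma')$: the key point is that because $\hat\gamma$ is a $\hat g_{\varepsilon}$-geodesic, the second-order term in $h$ disappears and only $|\nabla h|_g$ enters, which scales precisely as $\varepsilon \rho^{-2} e^{-1/\rho}$. Once this first-order control is in hand, the construction of $\{\hat e_3,\dots,\hat e_{2n}\}$ reduces to a purely algebraic linear ODE, and no further curvature input is required.
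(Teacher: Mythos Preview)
Your proof is correct and rests on the same key input as the paper's: the pointwise bound $|\hat\nabla_{\hat\gamma'}(J\hat\gamma')|_{\hat g_\e}^2 \leq C\varepsilon^2\rho^{-4}e^{-2/\rho}$, which is equivalent to the estimate $|\hat\nabla J|_{\hat g_\e}^2 \leq C_n e^{2h}|\nabla h|_g^2$ from Lemma~\ref{Conformal lemma}(a) once one observes $\hat\nabla_{\hat\gamma'}(J\hat\gamma') = (\hat\nabla_{\hat\gamma'}J)\hat\gamma'$.

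The construction of the frame is organized differently. The paper parallel-transports an initial orthonormal frame $\{v_i\}$ along $\hat\gamma_{\e,\ell}$ with respect to $\hat\nabla$, subtracts off the $J\hat\gamma'$ component to form $\hat w_i = \hat v_i - \hat g_\e(\hat v_i, J\hat\gamma')J\hat\gamma'$, and then applies Gram--Schmidt. You instead solve the projected parallel transport ODE $\hat\nabla_{\hat\gamma'}\hat e_i = -\hat g_\e(\hat e_i,\hat\nabla_{\hat\gamma'}(J\hat\gamma'))\,J\hat\gamma'$, which is exactly the condition that $\hat e_i$ be parallel for the connection induced on the sub-bundle $\{\hat\gamma',J\hat\gamma'\}^{\perp_{\hat g_\e}}$. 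Your route is slightly more economical: it builds in orthonormality from the start and delivers the derivative bound directly via Cauchy--Schwarz, whereas the paper's Gram--Schmidt step implicitly uses that the Gram matrix of the $\hat w_i$ stays uniformly close to the identity (which follows since $|\hat\nabla(J\hat\gamma')|$ is supported inside the ball and has bounded integral). One small slip: the parenthetical reference to the frame from Proposition~\ref{prop:geodesic line curvature} is misplaced, since that frame is adapted to $\gamma'$ rather than to $\hat\gamma'_{\e,\ell}(\hat\ell)$; but any $\hat g_\e$-orthonormal basis of $\{\hat\gamma'(t_0),J\hat\gamma'(t_0)\}^\perp$ works, so this does not affect the argument.
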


\begin{proof}[Proof of Claim \ref{vector fields claim}]
At the point $\hat{\gamma}_{\e,\ell}(-\hat{\ell })=\gamma(-\ell)$, we choose tangent vectors $\{v_{i}\}_{i=3}^{2n}$ such that $\{\hat{\gamma}_{\e,\ell}',J\hat{\gamma}_{\e,\ell}',v_{3},\cdots,v_{2n}\}$ are orthonormal basis of $T_{\hat{\gamma}_{\e,\ell}(-\hat{\ell})}M$. Let $\hat{v}_{i}$ be the $\hat g_\e$-parallel transportation of $v_{i}$ along $\hat{\gamma}_{\e,\ell}$. Define
\[
\hat{w}_{i} = \hat{v}_{i}-\hat{g}_\e(\hat{v}_{i},J\hat{\gamma}_{\e,\ell}')J\gamma_{\e,\ell}' \ \ \text{for $i=3,\cdots,2n$}.
\]
It is clear that
\[
\hat{g}_{\ve}(\hat{w}_{i},\hat{\gamma}_{\e,\ell}') = \hat{g}_{\ve}(\hat{w}_{i},J\hat{\gamma}_{\e,\ell}') = 0 \ \ \text{for $i=3,\cdots,2n$}.
\]
Using Lemma \ref{Conformal lemma} (a), $h=\ve\vp\leq\ve$ and $|\nabla h|^2\leq C\e^{2}\rho^{-4}e^{-2/\rho}$ (see \eqref{eqn:rho-estim}), we have $ |\hat{\nabla}J|^{2}\leq C\e^{2}\rho^{-4}e^{-2/\rho}$ and so
\[
|\hat{\nabla}\hat{w}_{i}|^{2} \leq C|\hat{\nabla}J|^{2} \leq C\e^{2}\rho^{-4}e^{-2/\rho} \ \ \text{for $i=3,\cdots,2n$}.
\]
Apply Gram-Schmidt orthonormalization to $\{\hat{w}_{3},\cdots,\hat{w}_{2n}\}$, we obtain the required $\{\hat{e}_{3},\cdots,\hat{e}_{2n}\}$.
\end{proof}

Let $\eta_{\hat{\ell}}:\mathbb{R}\to[0,+\infty)$ be an one-variable non-negative function such that
\begin{itemize}\setlength{\itemsep}{1mm}
\item $\eta_{\hat{\ell}}(t)=1$ when $t\in[-A_{0}-1,A_{0}+1]$;
\item $\eta_{\hat{\ell}}(t)=0$ when $t\in(-\infty,-\hat{\ell}]\cup[\hat{\ell},+\infty)$;
\item $|\eta_{\hat{\ell}}'(t)|\leq 2/\hat{\ell}$,
\end{itemize}
where $A_{0}$ is the uniform constant in Claim \ref{hat a claim}. For $2\leq i\leq 2n$, define
\[
\hat{V}_{i}(t) = \eta_{\hat{\ell}}(t)\hat{e}_{i}(t), \ \
\hat{\phi}_{i}(t,s) = \mathrm{exp}_{\hat{\gamma}_{\e,\ell}(t)}(s\hat{V}_{i}(t)), \ \
\hat{L}_{i}(s) = \mathrm{Length}_{\hat{g}_{\ve}}(\hat{\phi}_{i}(\cdot,s)).
\]
Since $\hat{\gamma}_{\e,\ell}|_{[-\hat{\ell},\hat{\ell}]}$ is minimizing, then $\hat{L}(s)$ achieves its minimum at $0$. The second variation formula of arc length gives:
\[
0 \leq \frac{d^{2}}{ds^{2}}\bigg|_{s=0}\hat{L}_{i}(s) =
\int_{-\hat{\ell}}^{\hat{\ell}}\left(|\hat{\nabla}\hat{V}_{i}|^{2}-\hat{R}(\hat{V}_{i},\hat{\gamma}_{\e,\ell}',\hat{\gamma}_{\e,\ell}',\hat{V}_{i})\right)dt.
\]
We compute
\[
|\hat{\nabla}\hat{V}_{i}|^{2}
= |\eta_{\hat{\ell}}'\hat{e}_{i}+\eta_{\hat{\ell}}\hat{\nabla}\hat{e}_{i}|^{2}
\leq 2|\eta_{\hat{\ell}}'|^{2}+2\eta_{\hat{\ell}}^{2}|\hat{\nabla}\hat{e}_{i}|^{2}
\leq \frac{8}{\hat{\ell}^{2}}+\eta_{\hat{\ell}}^{2}\cdot C\e^2 \rho^{-4}e^{-2/\rho}
\]
and so
\[
0 \leq \frac{d^{2}}{ds^{2}}\bigg|_{s=0}\hat{L}_{i}(s) \leq \frac{16}{\hat{\ell}}+\int_{-\hat{\ell}}^{\hat{\ell}}\eta_{\hat{\ell}}^{2}\left(C\e^2 e^{-2/\rho}\rho^{-4}-\hat{R}(\hat{e}_{i},\hat{\gamma}_{\e,\ell}',\hat{\gamma}_{\e,\ell}',\hat{e}_{i})\right)dt.
\]
Since $\alpha>0$ and $\alpha+\beta>0$, then
\begin{equation*}
\begin{split}
0 \leq {} & \frac{d^{2}}{ds^{2}}\bigg|_{s=0}\left((\alpha+\beta)\hat{L}_{2}(s)+\alpha\sum_{i=3}^{2n}\hat{L}_{i}(s)\right) \\
\leq {} & \Big((\alpha+\beta)+(2n-2)\alpha\Big)\cdot\frac{16}{\hat{\ell}}+\int_{-\hat{\ell}}^{\hat{\ell}}\eta_{\hat{l}}^{2}\cdot C\e^2 \rho^{-4}e^{-2/\rho}dt \\
& -\int_{-\hat{\ell}}^{\hat{\ell}}\eta_{\hat{l}}^{2}\Big((\alpha+\beta)\hat{R}(J\hat{\gamma}_{\e,\ell}',\hat{\gamma}_{\e,\ell}',\hat{\gamma}_{\e,\ell}',J\hat{\gamma}_{\e,\ell}')
+\alpha\sum_{i=3}^{2n}\hat{R}(\hat{e}_{i},\hat{\gamma}_{\e,\ell}',\hat{\gamma}_{\e,\ell}',\hat{e}_{i})\Big)\,dt \\
\leq {} & \frac{C}{\hat{\ell}}
+\int_{-\hat{\ell}}^{\hat{\ell}}\eta_{\hat{\ell}}^{2}\Big(C\e^2\rho^{-4} e^{-2/\rho}-\hat{\mathcal{C}}_{\alpha,\beta}(\hat{\gamma}_{\e,\ell}')\Big)\,dt.
\end{split}
\end{equation*}
Combining this with $e^{-1/\rho}\leq\rho$ and \eqref{upper bound of rho},
\[
\begin{split}
\int_{-\hat{\ell}}^{\hat{\ell}}\eta_{\hat{\ell}}^{2} \,\hat{\mathcal{C}}_{\a,\b}(\hat\gamma_{\e,\ell}')\,dt
\leq {} & \frac{C}{\hat{\ell}}+\int_{-\hat{\ell}}^{\hat{\ell}}\eta_{\hat{\ell}}^{2}\cdot C\e^2 \rho^{-4}e^{-2/\rho} dt \\
\leq {} & \frac{C}{\hat{\ell}}+\int_{-\hat{\ell}}^{\hat{\ell}}\eta_{\hat{\ell}}^{2}\cdot C\e^2 \rho^{-4}e^{-1/\rho}\cdot \rho\, dt \\
\leq {} & \frac{C}{\hat{\ell}}+\int_{-\hat{\ell}}^{\hat{\ell}}\eta_{\hat{\ell}}^{2}\cdot C\e^2 \rho^{-4}e^{-1/\rho}\cdot 4\mu_{0}r^{2}\, dt.
\end{split}
\]
Together with \eqref{lower bound of C a b},
\begin{equation*}
\int_{-\hat{\ell}}^{\hat{\ell}}\eta_{\hat{\ell}}^{2}\cdot\e r^{2}\Lambda_{0}^{-1}\rho^{-4}e^{-1/\rho}dt
\leq \frac{C}{\hat{\ell}}+\int_{-\hat{\ell}}^{\hat{\ell}}\eta_{\hat{\ell}}^{2}\cdot C\e^2 \rho^{-4}e^{-1/\rho}\cdot 4\mu_{0}r^{2}\, dt.
\end{equation*}
We might assume that $\ve\leq(8C\mu_{0}\Lambda_{0})^{-1}$ (which is uniform and independent of $\ell$), then
\begin{equation*}
\frac{1}{2}\int_{-\hat{\ell}}^{\hat{\ell}}\eta_{\hat{\ell}}^{2}\cdot\e r^{2}\Lambda_{0}^{-1}\rho^{-4}e^{-1/\rho}dt
\leq \frac{C}{\hat{\ell}}.
\end{equation*}
Combing this with $|\hat{\ell}-\ell|\leq C\ve$,
\begin{equation}\label{inequality C ell}
\e r^{2}\Lambda_{0}^{-1}\int_{-\hat{\ell}}^{\hat{\ell}}\eta_{\hat{\ell}}^{2}\rho^{-4}e^{-1/\rho}dt
\leq \frac{C}{\ell}.
\end{equation}
Thanks to Claim \ref{hat a claim}, there exists $\hat{a}_{\e,\ell}$ such that $d_{g}(z,\hat{\gamma}_{\e,\ell}(\hat{a}_{\e,\ell}))<(1+\mu_{0}-\delta_{0})r$ with $|\hat{a}_{\e,\ell}|\leq A_{0}$. When $t\in(\hat{a}_{\e,\ell}-\delta_{0} r/4,\hat{a}_{\e,\ell}+\delta_{0} r/4)$, using $g=e^{2\e\varphi}\hat{g}_{\ve}\leq e^{2\ve}\hat{g}_{\ve}\leq 4\hat{g}_{\ve}$ (by assuming $\ve<1/2$) we have
\[
\begin{split}
d_{g}(z,\hat{\gamma}_{\e,\ell}(t))
\leq {} & d_{g}(z,\hat{\gamma}_{\e,\ell}(\hat{a}_{\e,\ell}))+d_{g}(\hat{\gamma}_{\e,\ell}(\hat{a}_{\e,\ell}),\hat{\gamma}_{\e,\ell}(t)) \\
< {} & (1+\mu_{0}-\delta_{0})r+2d_{\hat{g}_{\ve}}(\hat{\gamma}_{\e,\ell}(\hat{a}_{\e,\ell}),\hat{\gamma}_{\e,\ell}(t)) \\
= {} & (1+\mu_{0}-\delta_{0})r+2\cdot|t-\hat{a}_{\e,\ell}| \\
< {} & (1+\mu_{0}-\delta_{0}/2)r
\end{split}
\]
and so
\[
\rho(\hat{\gamma}_{\e,\ell}(t)) \geq \rho_3 := (1+\mu_{0})^2r^2 -(1+\mu_{0}-\delta_{0}/2)^2r^2.
\]
By \eqref{upper bound of rho}, $r\leq r_{0}<1$ and $\mu_{0}\in(0,1/8)$, we have
\[
\rho(\cdot) < 1/2 \ \ \text{on $\hat{\gamma}_{\e,\ell}$}.
\]
It then follows that
\[
\rho_3 \leq \rho(\hat{\gamma}_{\e,\ell}(t)) < 1/2 \ \ \text{for $t\in(\hat{a}_{\e,\ell}-\delta_{0} r/4,\hat{a}_{\e,\ell}+\delta_{0} r/4)$}.
\]
Combining this with $s^{-4}e^{-1/s}$ is increasing for $s\in(0,1/2)$, and using \eqref{inequality C ell},
\[
\begin{split}
\e r^{2}\Lambda_{0}^{-1}\cdot \rho_3^{-4}e^{-1/\rho_3}\cdot\delta_{0} r
\leq {} & \e r^{2}\Lambda_{0}^{-1}\int_{\hat{a}_{\e,\ell}-\delta_{0} r/4}^{\hat{a}_{\e,\ell}+\delta_{0} r/4} \rho^{-4}e^{-1/\rho}dt \\
\leq {} & \e r^{2}\Lambda_{0}^{-1}\int_{-A_{0}-1}^{A_{0}+1}\rho^{-4}e^{-1/\rho}dt \\
\leq {} & \e r^{2}\Lambda_{0}^{-1}\int_{-\hat{\ell}}^{\hat{\ell}}\eta_{\hat{\ell}}^{2}\rho^{-4}e^{-1/\rho}dt \\
\leq {} & \frac{C}{\ell}.
\end{split}
\]
Hence, we obtain a contradiction if $\ell$ is sufficiently large (depending only on $\e$, $r$, $n$, $\alpha$ and $\beta$).

\bigskip
\noindent
{\bf Step 5.} The constructed $g$-geodesic line $\hat\gamma=\displaystyle{\lim_{\e\to0}\lim_{\ell\to+\infty}}\hat\gamma_{\e,\ell}$ satisfies the required properties (a)-(c) in Proposition \ref{prop:technical proposition}.
\bigskip

It is clear from construction that $\hat\gamma$ satisfies (a) from {\bf Step 4}. To prove (b) and (c), it suffices to show
\begin{enumerate}\setlength{\itemsep}{1mm}
\item[(b')] $\mathcal{B}_{+}(\hat{\gamma}_{\e,\ell}(t))+\mathcal{B}_{-}(\hat{\gamma}_{\e,\ell}(t))=O(\e)$;
\item[(c')] $\mathcal{B}_{+}(\hat{\gamma}_{\e,\ell}(t_{1}))-\mathcal{B}_{+}(\hat{\gamma}_{\e,\ell}(t_{2}))=t_{2}-t_{1}+O(\e)$ for all $t_1,t_2\in\mathbb{R}$.
\end{enumerate}
To prove (b'), we compute
\begin{equation}\label{B length hat gamma}
\begin{split}
\mathcal{B}_{+}(\hat{\gamma}_{\e,\ell}(t))
\leq {} & \lim_{s\to+\infty} \big( d_{g}(\hat{\gamma}_{\e,\ell}(t),\hat{\gamma}_{\e,\ell}(\hat{\ell}))+d_{g}(\hat{\gamma}_{\e,\ell}(\hat{\ell}),\gamma_{0}(s))-s \big) \\
= {} & d_{g}(\hat{\gamma}_{\e,\ell}(t),\hat{\gamma}_{\e,\ell}(\hat{\ell}))+\lim_{s\to+\infty}\big( d_{g}(\gamma(\ell),\gamma_{0}(s))-s \big) \\
\leq {} & \mathrm{Length}_{g}(\hat{\gamma}_{\e,\ell}|_{[t,\hat{\ell}]})+\mathcal{B}_{+}(\gamma(\ell)).
\end{split}
\end{equation}
Using $\hat{g}_\e=g$ outside $B_g(z,(1+\mu_{0})r)$ and \eqref{eqn:metric-compare}, we obtain
\begin{equation}\label{length hat gamma}
\begin{split}
\quad& \mathrm{Length}_{g}(\hat{\gamma}_{\e,\ell}|_{[t,\hat{\ell}]}) \\[0.5mm]
= {} & \mathrm{Length}_{\hat g_\e}(\hat{\gamma}_{\e,\ell}|_{[t,\hat{\ell}]})
+\int_{\hat{\gamma}_{\e,\ell}|_{[t,\hat{\ell}]}\cap B_g(z,(1+\mu_{0})r)}(|\hat{\gamma}_{\e,\ell}'|_{g}-|\hat{\gamma}_{\e,\ell}'|_{\hat{g}_\e})dt \\
\leq {} & \hat{\ell}-t+\left(e^{\ve(1+\mu_{0})^{2}r^{2}}-1\right)\cdot
\mathrm{Length}_{\hat g_\e}\big(\hat{\gamma}_{\e,\ell}|_{[t,\hat{\ell}]}\cap B_g(z,(1+\mu_{0})r)\big) \\[1.5mm]
\leq {} & \hat{\ell}-t+\left(e^{\ve(1+\mu_{0})^{2}r^{2}}-1\right)\cdot
\mathrm{Diam}_{\hat{g}_{\ve}}\big(B_g(z,(1+\mu_{0})r)\big) \\[1.5mm]
\leq {} & \hat{\ell}-t+\left(e^{\ve(1+\mu_{0})^{2}r^{2}}-1\right)\cdot
\mathrm{Diam}_{g}\big(B_g(z,(1+\mu_{0})r)\big) \\[1.5mm]
\leq {} & \hat{\ell}-t+C\e.
\end{split}
\end{equation}
Combining this with \eqref{B length hat gamma}, we obtain
\begin{equation}\label{B hat gamma upper bound 1}
\mathcal{B}_{+}(\hat{\gamma}_{\e,\ell}(t)) \leq \hat{\ell}-t+C\e+\mathcal{B}_{+}(\gamma(\ell)).
\end{equation}
By the similar argument of \eqref{B length hat gamma} and \eqref{length hat gamma}, we have
\[
\mathcal{B}_{-}(\hat{\gamma}_{\e,\ell}(t))
\leq \mathrm{Length}_{g}(\hat{\gamma}_{\e,\ell}|_{[-\hat{\ell},t]})+\mathcal{B}_{-}(\gamma(-\ell))
\]
and
\begin{equation}\label{length upper bound}
\mathrm{Length}_{g}(\hat{\gamma}_{\e,\ell}|_{[-\hat{\ell},t]}) \leq \hat{\ell}+t+C\e.
\end{equation}
Then
\begin{equation}\label{B hat gamma upper bound 2}
\mathcal{B}_{-}(\hat{\gamma}_{\e,\ell}(t)) \leq \hat{\ell}+t+C\e+\mathcal{B}_{-}(\gamma(-\ell)).
\end{equation}
Recall that the geodesic line $\gamma$ satisfies
\begin{enumerate}\setlength{\itemsep}{1mm}
\item[(i)] $\mathcal{B}_{+}+\mathcal{B}_{-}=0$ on $\gamma$;
\item[(ii)] $\mathcal{B}_{+}(\gamma(t_{1}))-\mathcal{B}_{+}(\gamma(t_{2}))=t_{2}-t_{1}$ for all $t_1,t_2\in\mathbb{R}$.
\end{enumerate}
Together with $\mathcal{B}_{+}+\mathcal{B}_{-}\geq0$, \eqref{B hat gamma upper bound 1} and \eqref{B hat gamma upper bound 2}, we obtain
\[
\begin{split}
0 \leq {} & \mathcal{B}_{+}(\hat{\gamma}_{\e,\ell}(t))+\mathcal{B}_{-}(\hat{\gamma}_{\e,\ell}(t)) \\
\leq {} & 2\hat{\ell}+C\e+\mathcal{B}_{+}(\gamma(\ell))+\mathcal{B}_{-}(\gamma(-\ell)) \\
= {} & 2\hat{\ell}+C\e+\mathcal{B}_{+}(\gamma(\ell))-\mathcal{B}_{+}(\gamma(-\ell)) \\
= {} & 2\hat{\ell}+C\e-2\ell \leq C\e,
\end{split}
\]
where we used $|\hat{\ell}-\ell|\leq C\e$ in the last line. Then we obtain (b').

\medskip

For (c'), we estimate the lower bound of $\mathcal{B}_{+}(\hat{\gamma}_{\e,\ell}(t))$:
\[
\begin{split}
\mathcal{B}_{+}(\hat{\gamma}_{\e,\ell}(t))
\geq {} & \lim_{s\to+\infty}\big( d_{g}(\hat{\gamma}_{\e,\ell}(-\hat{\ell}),\gamma_{0}(s))-d_{g}(\hat{\gamma}_{\e,\ell}(t),\hat{\gamma}_{\e,\ell}(-\hat{\ell}))-s \big) \\
= {} & \lim_{s\to+\infty}\big( d_{g}(\gamma(-\ell),\gamma_{0}(s))-s\big)-d_{g}(\hat{\gamma}_{\e,\ell}(t),\hat{\gamma}_{\e,\ell}(-\hat{\ell})) \\
\geq {} & \mathcal{B}_{+}(\gamma(-\ell))-\mathrm{Length}_{g}(\hat{\gamma}_{\e,\ell}|_{[-\hat{\ell},t]}).
\end{split}
\]
Using \eqref{length upper bound}, we see that
\[
\mathcal{B}_{+}(\hat{\gamma}_{\e,\ell}(t)) \geq \mathcal{B}_{+}(\gamma(-\ell))-\hat{\ell}-t-C\e.
\]
Combining this with \eqref{B hat gamma upper bound 1}, (ii) and $|\hat{\ell}-\ell|\leq C\e$,
\[
\begin{split}
& \mathcal{B}_{+}(\hat{\gamma}_{\e,\ell}(t_{1}))-\mathcal{B}_{+}(\hat{\gamma}_{\e,\ell}(t_{2})) \\
\geq {} & \mathcal{B}_{+}(\gamma(-\ell))-\hat{\ell}-t_{1}-C\e-\big(\hat{\ell}-t_{2}+C\e+\mathcal{B}_{+}(\gamma(\ell))\big) \\
= {} & \mathcal{B}_{+}(\gamma(-\ell))-\mathcal{B}_{+}(\gamma(\ell))-2\hat{\ell}+t_{2}-t_{1}-C\e \\
= {} & 2\ell-2\hat{\ell}+t_{2}-t_{1}-C\e \\
\geq {} & t_{2}-t_{1}-C\e.
\end{split}
\]
Interchanging $t_{1}$ and $t_{2}$, we obtain (c'). This completes the proof of Proposition \ref{prop:technical proposition}.
\end{proof}

\section{Vanishing results}\label{sec:vanishing}

In \cite{ChuLeeTam2022}, the first-, second-named authors and Tam considered the compact \K manifolds with $\mathcal{C}_{\a,\b}>0$, and showed that all Hodge numbers vanish by using Ni's co-mass technique \cite{Ni2021b}. In this section, we wish to prove some vanishing results for \K manifolds with either quasi-positive $\mathcal{C}_{\a,\b}$, or non-negative $\mathcal{C}_{\a,\b}$ and special holonomy group, which will be used in the proof of Theorem \ref{thm:structure} and \ref{thm:quasi-positive-mixed} (see Section \ref{sec:proof and applications}).

We first need an analogous computation to the proof of \cite[Proposition 7.1]{ChuLeeTam2022} for conformally \K metrics. We will use $\|\xi\|_{0,g}$ to denote the co-mass of a holomorphic $(p,0)$-form $\xi$ with respect to the Hermitian metric $g$, which is a function globally defined on $M$, see the discussion in \cite[Section 3]{Ni2021b}.

\begin{lma}\label{lma:contra-vanishing-twisted}
Let $(M,g)$ be a compact \K metric manifold with $\mathcal{C}_{\a,\b}(x)\geq \sigma(x)$ for some function $\sigma(x)$, $\xi$ be a non-zero holomorphic $(p,0)$-form and $\hat{g}=e^{\psi}g$ be a conformally K\"ahler metric. Suppose $\a>0$, $\a+\frac{2\b}{p+1}>0$ and $\|\xi\|_{0,\hat{g}}$ (the co-mass of $\xi$ with respect to $\hat g$) attains its maximum at $x_0\in M$. Then we have at $x_0$,
\[
\a \cdot \Delta_g \psi +\frac{2\b}{p+1}\sum_{i=1}^p \psi_{i\bar i}\geq \sigma(x_0)
\]
for some $g$-unitary frame at $x_0$. Here $\Delta_g f=\tr_g \ddb f$ denotes the complex Laplacian for smooth function $f$ in $M$.
\end{lma}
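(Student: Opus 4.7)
The plan is to adapt Ni's co-mass technique \cite{Ni2021b}, as employed in \cite[Proposition 7.1]{ChuLeeTam2022}, to the conformally K\"ahler metric $\hat g = e^\psi g$, and close the argument by averaging the pointwise curvature hypothesis $\mathcal{C}_{\alpha,\beta}\ge\sigma$ over the unit sphere of a specific $p$-dimensional complex subspace. The key algebraic identity is that decomposable $p$-vectors rescale by $e^{p\psi/2}$ under $g\to\hat g$, so $\|\xi\|_{0,\hat g}^{2}=e^{-p\psi}\|\xi\|_{0,g}^{2}$; after rescaling lengths, a $g$-unitary frame $\{e_1,\ldots,e_p\}$ that realizes $\|\xi\|_{0,g}$ at $x_0$ also realizes $\|\xi\|_{0,\hat g}$ at $x_0$. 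Fix such a frame together with an extension to a full $g$-unitary frame $\{e_i\}_{i=1}^n$ at $x_0$.

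Next I would produce a smooth local lower barrier for $\log\|\xi\|_{0,\hat g}^2$ at $x_0$. In $g$-K\"ahler normal coordinates at $x_0$ with $\partial/\partial z^i|_{x_0}=e_i$, set $X_i=\partial/\partial z^i$, $u(x)=\xi(X_1,\ldots,X_p)(x)$ (locally holomorphic with $u(x_0)\neq 0$) and
\[
\Phi(x) := \log|u(x)|^2 - p\,\psi(x) - \log\det\bigl(g(X_i,\bar X_j)\bigr)_{i,j=1}^p(x).
\]
The defining inequality of the co-mass applied to the decomposable $p$-vector $X_1\wedge\cdots\wedge X_p$ gives $\Phi\le\log\|\xi\|_{0,\hat g}^2$ near $x_0$, with equality at $x_0$ by construction; hence $\Phi$ attains a local maximum at $x_0$, and its complex Hessian satisfies $(\Phi_{i\bar j})(x_0)\le 0$ as a Hermitian matrix. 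Testing this against the Hermitian form that is diagonal in $\{e_i\}$ with entries $\alpha+\tfrac{2\beta}{p+1}$ in the first $p$ slots and $\alpha$ in the remaining ones -- its positive semi-definiteness being exactly the hypothesis $\alpha>0$, $\alpha+\tfrac{2\beta}{p+1}>0$ -- yields
\[
\alpha\,\Delta_g\Phi(x_0) + \tfrac{2\beta}{p+1}\sum_{i=1}^p \Phi_{i\bar i}(x_0) \le 0.
\]
Combining this with $\Delta_g\log|u|^2(x_0)=0$ (by holomorphicity) and the standard identity $\partial_k\partial_{\bar l}g_{i\bar j}(x_0)=-R_{i\bar j k\bar l}$ in K\"ahler normal coordinates, a direct computation rearranges the above into
\[
\alpha p\,\Delta_g\psi(x_0) + \tfrac{2\beta p}{p+1}\sum_{i=1}^p \psi_{i\bar i}(x_0) \;\ge\; \alpha\sum_{i=1}^p \Ric(e_i,\bar e_i) + \tfrac{2\beta}{p+1}\sum_{i,a=1}^p R_{a\bar a i\bar i}.
\]

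To close the argument I would average the pointwise hypothesis $\mathcal{C}_{\alpha,\beta}(u)\ge\sigma(x_0)$ over the complex unit sphere of $V=\operatorname{span}_{\mathbb{C}}(e_1,\ldots,e_p)$. Elementary Haar moments on $S^{2p-1}$, together with the K\"ahler symmetry $R_{i\bar k k\bar i}=R_{i\bar i k\bar k}$, yield
\[
\overline{\Ric(u,\bar u)} = \tfrac{1}{p}\sum_{i=1}^p \Ric(e_i,\bar e_i), \qquad
\overline{H(u)} = \tfrac{2}{p(p+1)}\sum_{i,a=1}^p R_{a\bar a i\bar i},
\]
so integrating $\mathcal{C}_{\alpha,\beta}(u)\ge\sigma(x_0)$ over $S^{2p-1}$ and multiplying by $p$ bounds the right-hand side of the previous display below by $p\sigma(x_0)$; dividing by $p$ produces the claim. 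The main obstacle is identifying precisely this combination of second-order derivatives to test $(\Phi_{i\bar j})\le 0$ against: the coefficient $\tfrac{2\beta}{p+1}$ is dictated simultaneously by the positive semi-definiteness condition (matching the sign hypothesis on $\beta$) and by the $\tfrac{2}{p(p+1)}$-normalization appearing in the $S^{2p-1}$-average of the holomorphic sectional curvature, so that the two inequalities chain together without slack.
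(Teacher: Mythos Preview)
Your proposal is correct and follows essentially the same route as the paper. Both arguments build the Ni-type co-mass barrier for $\log\|\xi\|_{0,\hat g}^2=\log\|\xi\|_{0,g}^2-p\psi$ to obtain $\sum_{i=1}^p R_{v\bar v i\bar i}\le p\,\psi_{v\bar v}$ for every $v$ at $x_0$ (your $(\Phi_{i\bar j})\le 0$ is exactly this), then close by averaging $\mathcal{C}_{\alpha,\beta}\ge\sigma$ over the unit sphere of $\Sigma=\mathrm{span}\{e_1,\dots,e_p\}$ and using $\alpha>0$, $\alpha+\tfrac{2\beta}{p+1}>0$; the only cosmetic difference is that the paper substitutes the barrier inequality with $v=e_j$ term-by-term, whereas you package the same step as tracing $(\Phi_{i\bar j})\le 0$ against the diagonal weight matrix.
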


\begin{proof}
Since $\hat g=e^{\psi}g$, then
\[
\log\|\xi\|^2_{0,\hat g}=\log\|\xi\|^2_{0, g}-p\psi.
\]
Following the barrier argument in  \cite[Section 3]{Ni2021b}, we see that at $x_0$,
\begin{equation}\label{eqn:twist-R}
\begin{split}
0\geq \sum_{i=1}^p R_{v\bar vi\bar i}-p\psi_{v\bar v}
\end{split}
\end{equation}
for any $v\in T^{1,0}_{x_0}M$ and for some $g$-unitary frame $\{\frac{\partial}{\partial z^k}\}_{k=1}^n$ at $x_0$. Denote $\Sigma=\mathrm{span}\{\frac{\partial}{\partial z^i}: 1\leq i\leq p \}$. Using Berger's averaging trick and $\mathcal{C}_{\a,\b}(Z)\geq\sigma(x_{0})$,
\begin{equation}\label{eqn:average-trick}
\begin{split}
\sigma(x_0) \leq {} & \fint_{Z\in \Sigma,\,|Z|_g=1} \a \Ric(Z,\bar Z)+\b H(Z)\, d\theta(Z)\\
= {} & \frac{\a}{p}\sum_{i=1}^p R_{i\bar i}+\frac{2\b}{p(p+1)}\sum_{j=1}^p\sum_{i=1}^p R_{i\bar ij\bar j}\\
= {} & \frac{\a}{p}\sum_{j=p+1}^n\sum_{i=1}^p R_{i\bar ij\bar j}+\frac1p\left(\a +\frac{2\b}{p+1} \right)\sum_{j=1}^p\sum_{i=1}^p R_{i\bar ij\bar j}.
\end{split}
\end{equation}
Combining this with $\a>0$, $\a+\frac{2\b}{p+1}>0$ and \eqref{eqn:twist-R}, we obtain
\[
\sigma(x_0) \leq \a\sum_{j=p+1}^n \psi_{j\bar j} +\left( \a+\frac{2\b}{p+1}\right) \sum_{j=1}^p \psi_{j\bar j}
= \a \cdot \Delta \psi +\frac{2\b}{p+1}\sum_{j=1}^p \psi_{j\bar j},
\]
as required.
\end{proof}

Using a argument similar to the proof of Proposition \ref{prop:technical proposition}, we extend the vanishing theorem \cite[Theorem 7.1]{ChuLeeTam2022} to the quasi-positive case.

\begin{thm}\label{thm:vanishing}
Suppose $(M^n,g)$ is a compact K\"ahler manifold with quasi-positive $\mathcal{C}_{\a,\b}$ where $\a>0$ and $\a+\frac{2\b}{p+1}>0$. Then the Hodge number $h^{p,0}(M)=0$ for all $1\leq p\leq n$. In particular, if $\a+\b>0$, then $h^{p,0}(M)=0$ for all $1\leq p\leq n$.
\end{thm}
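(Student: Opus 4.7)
The plan is to extend Ni's strict-positivity vanishing argument \cite{Ni2021b} to the quasi-positive setting by applying Lemma~\ref{lma:contra-vanishing-twisted} with a conformal factor $\psi_\e$ of the same shape as the annular bump of Claim~\ref{C a b} used in the proof of Proposition~\ref{prop:technical proposition}. Assume for contradiction that $h^{p,0}(M)\neq 0$ and fix a nonzero holomorphic $(p,0)$-form $\xi$. Set $\sigma(x):=\inf_{|v|_g=1}\mathcal{C}_{\a,\b}(v)$. By quasi-positivity, $\sigma\ge 0$ on $M$ and the open set $A:=\{\sigma>0\}$ is non-empty; since the zero locus of the holomorphic form $\xi$ is a proper analytic subvariety, hence nowhere dense, I may pick $p_0\in A$ with $\|\xi\|_{0,g}(p_0)>0$.

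I would fix $r_0>0$ small enough that $\overline{B_g(p_0,(1+\mu_0)r_0)}\subset A$, and take the bump $\varphi=f\circ\rho$ from Step~1 of the proof of Proposition~\ref{prop:technical proposition}: $\rho(x)=(1+\mu_0)^2r_0^2-d_g(x,p_0)^2$ and $f(s)=e^{-1/s}$ for $s>0$, extended by zero. Setting $\psi_\e=-\e\varphi$, the conformally K\"ahler metric $\hat g_\e=e^{\psi_\e}g$ coincides with $g$ outside $\mathrm{supp}(\varphi)\subset A$, and the co-mass satisfies $\|\xi\|_{0,\hat g_\e}^2=e^{p\e\varphi}\|\xi\|_{0,g}^2$, boosting it inside $A$ while leaving it unchanged outside. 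Let $x_\e$ be a maximum of $\|\xi\|_{0,\hat g_\e}$. Lemma~\ref{lma:contra-vanishing-twisted} then yields
\[
\a\,\Delta_g\psi_\e(x_\e)+\frac{2\b}{p+1}\sum_{i=1}^p(\psi_\e)_{i\bar i}(x_\e)\ge\sigma(x_\e)
\]
for the $g$-unitary frame adapted to $\xi$ at $x_\e$.

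The heart of the argument is to tune $\e$ and $r_0$ so that two things happen at once: the boost factor $e^{p\e\varphi}$ is large enough to drag $x_\e$ into $\mathrm{supp}(\varphi)$, hence into $A$ where $\sigma(x_\e)>0$; and yet the left-hand side above is strictly smaller than $\sigma(x_\e)$. A direct computation analogous to Claim~\ref{C a b}, relying on the flat vanishing of $f^{(k)}(s)$ as $s\to 0^+$, gives an upper bound of the form $C\e\rho^{-2}e^{-1/\rho}$ for the left-hand side on $\mathrm{supp}(\varphi)$; pinning $x_\e$ to a prescribed annular region then proceeds in the same way as Steps~3--4 of the proof of Proposition~\ref{prop:technical proposition}, where the analogous localization of the geodesic $\hat\gamma_{\e,\ell}$ is carried out. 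Combining these two pieces contradicts Lemma~\ref{lma:contra-vanishing-twisted} and forces $h^{p,0}(M)=0$.

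The main obstacle is the simultaneous control of the location of $x_\e$ and the accompanying Hessian bound for $\psi_\e$, which is exactly the tension resolved by the flat cutoff $f(s)=e^{-1/s}$ in the geodesic-line construction. The ``In particular'' then follows at once, since $\a+\b>0$ implies $\a+2\b/(p+1)>0$ for every $p\ge 1$ (use $2/(p+1)\le 1$), so $h^{p,0}(M)=0$ for all $1\le p\le n$; projectivity and simple connectedness are invoked directly from Tang~\cite{Tang2024}, as noted in the paragraph following the statement of Theorem~\ref{thm:quasi-positive-mixed}.
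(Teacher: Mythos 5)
Your proposal correctly identifies the right tool (Lemma~\ref{lma:contra-vanishing-twisted}), the right kind of conformal bump ($\psi_\e=-\e f\circ\rho$ with the flat cutoff $f(s)=e^{-1/s}$), and the right target (land a co-mass maximum in a region where $\mathcal{C}_{\a,\b}>0$ while keeping the Hessian term flatly small). But the one-shot implementation cannot be made to work, and the paper takes a structurally different route.

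The flaw is in the ``tune $\e$ and $r_0$'' step. You need a maximum $x_\e$ of $\|\xi\|_{0,\hat g_\e}$ to lie in the annular shell of $\mathrm{supp}(\varphi)$ near $\partial B_g(p_0,(1+\mu_0)r_0)$, because that is the only region where your bound $C\e\rho^{-2}e^{-1/\rho}$ on the left-hand side of Lemma~\ref{lma:contra-vanishing-twisted} is small (via $\rho\to 0$). You have no control over $\|\xi\|_{0,g}$ near $p_0$: if $\sup_M\|\xi\|_{0,g}$ is attained far from $p_0$ and the ratio to $\sup_{\mathrm{supp}\varphi}\|\xi\|_{0,g}$ is bounded away from $1$, then for $\e$ small the boosted co-mass never exceeds the exterior supremum and $x_\e$ stays outside $\mathrm{supp}(\varphi)$, yielding $\psi_\e\equiv 0$ near $x_\e$ and no contradiction (possibly $\sigma(x_\e)=0$). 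For $\e$ large, the boost $e^{p\e\varphi}$ peaks at the center, so $x_\e$ is drawn into the inner ball where $\rho$ is bounded below, the Hessian term is of order $\e$ (large), and the inequality of Lemma~\ref{lma:contra-vanishing-twisted} holds without contradiction. There is no intermediate $\e$ that pins $x_\e$ to the annulus, because $e^{p\e\varphi}$ is monotone increasing toward the center and you have no information on the profile of $\|\xi\|_{0,g}$ there. The analogy with Steps~3--4 of Proposition~\ref{prop:technical proposition} does not transfer: those steps show the perturbed geodesic \emph{enters the inner ball}, not that it stays in the annulus.

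The paper's proof resolves this by first proving the co-mass $\|\xi\|_{0,g}$ is \emph{globally constant} on $M$, using a propagation argument (Claims~\ref{claim:induc-vanishing} and \ref{claim:constant-comass}). The annulus is used in the opposite way from what you propose: given a point $\hat x$ in an annulus $A_g(z,(1-\mu_0)r,(1+\mu_0)r)$ that is \emph{already} a maximum of $\|\xi\|_{0,g}$, the perturbation strictly increases the co-mass at $\hat x$ (for every $\e>0$, however small), so the new maximum $x_\e$ must lie inside $B_g(z,(1+\mu_0)r)$; then the Hessian computation \eqref{eqn:cont-mu} \emph{rules out} $x_\e$ being in the annulus, forcing it into the inner ball $\overline{B_g(z,(1-\mu_0)r)}$, and letting $\e\to 0$ produces a max of the unperturbed co-mass strictly closer to the target. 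Iterating this shows constancy. Once $\|\xi\|_{0,g}$ is constant, its maximum is attained at every point, including a point $x_0$ with $\mathcal{C}_{\a,\b}(x_0)>0$; applying Lemma~\ref{lma:contra-vanishing-twisted} there with $\psi\equiv 0$ gives $0\ge\sigma(x_0)>0$, a contradiction. Your proposal is missing this constancy/propagation mechanism entirely, and without it the argument does not close. (Also, the remark about projectivity and simple connectedness belongs to Theorem~\ref{thm:quasi-positive-mixed}, not to the present statement, which asserts only the vanishing of $h^{p,0}$.)
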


\begin{proof}
We argue by contradiction. Suppose there exists a non-zero holomorphic $(p,0)$-form $\xi$ on $M$. By the following two claims, we will show that the co-mass $\|\xi\|_{0,g}$ is constant on $M$, which implies $\|\xi\|_{0,g}$ attains its maximum at each point. Since $\mathcal{C}_{\a,\b}$ is quasi-positive, then at the point $x_{0}$ such that $\mathcal{C}_{\a,\b}(x_{0})>0$, applying Lemma \ref{lma:contra-vanishing-twisted} with $\psi=0$ yields a contradiction.

\medskip

We next show the constancy of $\|\xi\|_{0,g}$. Since $M$ is compact, by scaling we might assume
\begin{equation}\label{eqn:Rm inj}
\sup_M|\Rm(g)|\leq 1 \ \ \text{and} \ \ \mathrm{inj}(M,g)\geq 2.
\end{equation}
In particular, the function $d^2_g(\cdot,z)$ is smooth on $B_g(z,2)$ for all $z\in M$.

\begin{claim}\label{claim:induc-vanishing}
There exists $\mu_{0}(n,\a,\b,p)\in (0,1)$ such that the following holds. For any $y\in M$, $\ell>1$ and $r\in(0,1]$, if there is $\hat x\in A_g(y,r,\ell r)=B_g(y,\ell r)\setminus \overline{B_g(y,r)}$ satisfying
\[
\|\xi\|_{0,g}(\hat x) = \sup_M \|\xi\|_{0,g},
\]
then there is $\tilde x\in B_g(y,\ell r-\mu_{0} r)$ satisfying
\[
\|\xi\|_{0,g}(\tilde x) = \sup_M \|\xi\|_{0,g}.
\]
\end{claim}

\begin{proof}[Proof of Claim \ref{claim:induc-vanishing}]
Since $r\in(0,1]$, we assume without loss of generality that $r=1$ by considering $r^{-2}g$ which still satisfies \eqref{eqn:Rm inj}. The constant $\mu_{0}$ is a small constant to be determined later. Let $f$ be the function on $\mathbb{R}$ given by
\[
f(s) =
\begin{cases}
\ e^{-1/s}  & \mbox{for $s>0$};\\
\ 0  & \mbox{for $s\leq0$}.
\end{cases}
\]
Write $s_{0}=d_{g}(\hat{x},y)\in(1,\ell)$ and let $\gamma:[0,s_0]\to M$ be a minimizing geodesic of unit length joining from $\hat x=\gamma(0)$ to $y=\gamma(s_0)$. Denote $z=\gamma(1)$ and then $\hat x\in A_g(z,1-\mu_{0},1+\mu_{0})=B_g(z,1+\mu_{0})\setminus\overline{B_g(z,1-\mu_{0})}$. Define $\psi(x)=-f\circ \rho(x)$ where $\rho(x)=(1+\mu_{0})^2-d_g^2(x,z)$. We write $d_{g}=d_{g}(\cdot,z)$ and compute
\begin{equation*}
\begin{split}
\psi_{i\bar j}
= {} & -(1-2\rho)\rho^{-4}e^{-1/\rho}\rho_{i}\rho_{\bar j}-\rho^{-2}e^{-1/\rho}\rho_{i\bar j} \\[1mm]
= {} & -4d_{g}^{2}\cdot(1-2\rho)\rho^{-4}e^{-1/\rho}(d_{g})_{i}(d_{g})_{\bar j}-\rho^{-2}e^{-1/\rho}(d_{g}^{2})_{i\bar j}.
\end{split}
\end{equation*}
From the Hessian comparison theorem, we see that $|(d_{g}^{2})_{i\bar j}|\leq C_{n}$.
Hence for $x\in  A_g(z,1-\mu_{0},1+\mu_{0})$, we have
\[
\begin{split}
& \a \cdot \Delta \psi +\frac{2\b}{p+1}\sum_{j=1}^p \psi_{j\bar j} \\
= {} & -4\a d_{g}^{2}\cdot(1-2\rho)\rho^{-4}e^{-1/\rho}-\a \rho^{-2}e^{-1/\rho} \Delta d_{g}^{2} \\[3.5mm]
& -\frac{2\b}{p+1}\sum_{j=1}^p \Big( 4d_{g}^{2}\cdot(1-2\rho)\rho^{-4}e^{-1/\rho}|(d_{g})_{j}|^{2}+\rho^{-2}e^{-1/\rho}(d_{g}^{2})_{j\bar j}\Big)\\
\leq {} & -4d_{g}^{2}\cdot(1-2\rho)\rho^{-4}e^{-1/\rho}\left(\alpha\cdot|\de d_{g}|^{2}+\frac{2\beta}{p+1}\sum_{j=1}^{p}|(d_{g})_{j}|^{2}\right)
+C\rho^{-2}e^{-1/\rho}.
\end{split}
\]
If $\beta\geq0$, then $\alpha\cdot|\de d_{g}|^{2}+\frac{2\beta}{p+1}\sum_{j=1}^{p}|(d_{g})_{j}|^{2}\geq\alpha\cdot|\de d_{g}|^{2}$. If $\beta<0$, then
\[
\begin{split}
\alpha\cdot|\de d_{g}|^{2}+\frac{2\beta}{p+1}\sum_{j=1}^{p}|(d_{g})_{j}|^{2}
\geq {} & \left(\alpha+\frac{2\beta}{p+1}\right)|\de d_{g}|^{2}-\frac{2\beta}{p+1}\sum_{j=p+1}^{n}|(d_{g})_{j}|^{2} \\
\geq {} & \left(\alpha+\frac{2\beta}{p+1}\right)|\de d_{g}|^{2}.
\end{split}
\]
In both cases, we have $\alpha\cdot|\de d_{g}|^{2}+\frac{2\beta}{p+1}\sum_{j=1}^{p}|(d_{g})_{j}|^{2}\geq C^{-1}$. On the other hand, on $A_g(z,1-\mu_{0},1+\mu_{0})$, we also have
\begin{equation}\label{eqn:d g rho}
(1-\mu_{0})^{2} \leq d_{g}^{2} \leq (1+\mu_{0})^{2} \ \, \text{and} \ \,
0\leq\rho\leq 4\mu_{0}.
\end{equation}
We might assume $\mu_{0}\leq1/16$ and then
\[
\begin{split}
& \a \cdot \Delta \psi +\frac{2\b}{p+1}\sum_{j=1}^p \psi_{j\bar j} \\
\leq {} & -4\left(1-\frac{1}{16}\right)^{2}\left(1-\frac{8}{16}\right)\rho^{-4}e^{-1/\rho}\left(\alpha\cdot|\de d_{g}|^{2}+\frac{2\beta}{p+1}\sum_{j=1}^{p}|(d_{g})_{j}|^{2}\right)
+C\rho^{-2}e^{-1/\rho} \\[2mm]
\leq {} & -C^{-1}\rho^{-4}e^{-1/\rho}+C\rho^{-2}e^{-1/\rho} \\[5mm]
= {} & -C^{-1}\rho^{-4}e^{-1/\rho}(1-C\rho^{2}).
\end{split}
\]
Using \eqref{eqn:d g rho} and choose $\mu_{0}$ sufficiently small such that
\[
1-C\rho^{2} \geq 1-16C\mu_{0}^{2} > 0,
\]
we obtain
\begin{equation}\label{eqn:cont-mu}
\a \cdot \Delta \psi +\frac{2\b}{p+1}\sum_{j=1}^p \psi_{j\bar j} < 0 \ \
\text{in $A_g(z,1-\mu_{0},1+\mu_{0})$}.
\end{equation}

\medskip

For $\ve>0$, consider the conformally \K metric $\hat g_\e=e^{\e\psi}g$ and suppose for any $\e>0$, the co-mass $\|\xi\|_{0,\hat g_\e}$ attains its maximum at $x_\e\in M$. Since $\hat{g}_{\e}=g$ outside $B_{g}(z,1+\mu_{0})$ and $\hat{g}_{\e}<g$ at $\hat{x}$, then
\[
\begin{split}
\sup_{M\setminus B_g\left(z,1+\mu_{0}\right)}\|\xi\|_{0,\hat g_\e}
= \sup_{M\setminus B_g\left(z,1+\mu_{0}\right)}\|\xi\|_{0,g}
\leq \|\xi\|_{0,g}(\hat x)
< \|\xi\|_{0,\hat g_\e}(\hat x)
\leq \|\xi\|_{0,\hat g_\e}(x_{\e}).
\end{split}
\]
This implies $x_\e\in B_g(z,1+\mu_{0})$. On the other hand, Lemma~\ref{lma:contra-vanishing-twisted} and \eqref{eqn:cont-mu} imply that  $x_\e\notin  A_g(z,1-\mu_{0},1+\mu_{0})$. Then we must have $x_\e\in \overline{B_g(z,1-\mu_{0})}$. Passing to a subsequence of $\ve\to0$, we assume $x_{\ve}\to\tilde{x}$ for some $\tilde x \in \overline{B_g(z,1-\mu_{0})}$. Together with $\hat{g}_{\ve}\to g$, we obtain $\|\xi\|_{0,g}(\tilde x)=\sup_M \|\xi\|_{0,g}$. Moreover, using the triangle inequality and $s_{0}\in(1,\ell)$,
\begin{equation*}
d_g(\tilde x,y)\leq d_g(\tilde  x,z)+d_g(z,y)\leq 1-\mu_{0}+s_0-1<\ell-\mu_{0}.
\end{equation*}
This completes the proof.
\end{proof}

We now use it to propagate the maximum of co-mass $\|\xi\|_0$.
\begin{claim}\label{claim:constant-comass}
The function  $\|\xi\|_{0,g}(x)$ is constant on $M$.
\end{claim}
\begin{proof}[Proof of Claim \ref{claim:constant-comass}]
It suffices to show that for any $x\in M$ and $r\in(0,1]$, we can find $\hat x\in \overline{B_g(x,r)}$ such that $\|\xi\|_{0,g}(\hat x)=\sup_M \|\xi\|_{0,g}$. Fix $x_1\in M$ so that $\|\xi\|_{0,g}(x_1)=\sup_M \|\xi\|_{0,g}$. Suppose $x_{1}\in B_{g}(x,\ell r)$ for some $\ell>1$. We might assume $x_1\in A_g(x,r,\ell r)$ (otherwise we are done). By Claim \ref{claim:induc-vanishing}, there exists $x_2 \in B_g(x,\ell r-\mu_{0} r)$ such that $\|\xi\|_{0,g}(x_2)=\sup_M \|\xi\|_{0,g}$. We might assume $\ell-\mu_{0}>1$ (otherwise we are done), then $x_2$ is either in $ A_g(x,r,\ell r-\mu_{0}r)$ or $\overline{B_g(x,r)}$. We are done in the latter case. Since $M$ is compact, by repeating the above procedure finitely, we are done.
\end{proof}
\end{proof}

We next consider compact \K manifolds with $\mathcal{C}_{\a,\b}\geq 0$ and $\mathrm{Hol}(M,g)=U(n)$. In this case, we combine ideas in Ni-Zheng \cite{NiZheng2018} with Campana-Demailly-Peternell \cite{CampanaDemaillyPeternell2015} to show that $h^{2,0}(M)=0$ and thus $M$ will be projective by the Kodaira Theorem \cite{Kodaira1954}.

\begin{thm}\label{thm:holo-vanishing}
Suppose $(M^{n},g)$ is a compact \K manifold with $\mathcal{C}_{\a,\b}\geq 0$ where $\alpha>0$, $\alpha+\beta>0$ and $\mathrm{Hol}(M,g)=U(n)$. Then $h^{2,0}(M)=0$. In particular, $M$ is projective.
\end{thm}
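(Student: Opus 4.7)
Assume for contradiction that $h^{2,0}(M)>0$, and pick a non-zero holomorphic $(2,0)$-form $\xi$. My plan is to run the conformal perturbation argument from the proof of Theorem~\ref{thm:vanishing} to show that the co-mass $\|\xi\|_{0,g}$ is constant on $M$; then to upgrade this constancy to genuine parallelism of $\xi$; and finally to invoke a standard holonomy argument to contradict $\mathrm{Hol}(M,g)=U(n)$.

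For the constancy step, I first check the algebraic hypothesis $\alpha+\tfrac{2\beta}{p+1}>0$ of Lemma~\ref{lma:contra-vanishing-twisted} at $p=2$: from $\alpha>0$ and $\alpha+\beta>0$ one has $\alpha+\tfrac{2\beta}{3}=\tfrac{1}{3}\bigl(\alpha+2(\alpha+\beta)\bigr)>0$. Neither Claim~\ref{claim:induc-vanishing} nor Claim~\ref{claim:constant-comass} in the proof of Theorem~\ref{thm:vanishing} actually uses the quasi-positivity of $\mathcal{C}_{\alpha,\beta}$: the conformal bump function $\psi$ already produces the strict negativity needed in the key inequality of Lemma~\ref{lma:contra-vanishing-twisted}, so the interior-maximum argument goes through with $\sigma\equiv 0$. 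Running those two claims verbatim forces $\|\xi\|_{0,g}$ to be constant on $M$.

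Since the co-mass is now attained at every point of $M$, Lemma~\ref{lma:contra-vanishing-twisted} applies at each $x\in M$ with $\psi=0$, and the chain of inequalities behind it must be saturated everywhere. I would combine this pointwise rigidity with the standard Weitzenb\"ock identity for a harmonic holomorphic $(2,0)$-form on a compact K\"ahler manifold, namely $\int_M\bigl(|\nabla \xi|^2+\langle R\cdot\xi,\xi\rangle\bigr)=0$, to force both the curvature pairing $\langle R\cdot\xi,\xi\rangle$ and $|\nabla\xi|^2$ to vanish identically on $M$. This produces a non-zero parallel holomorphic $(2,0)$-form $\xi$. Parallelism means the restricted holonomy fixes $\xi_x\in\Lambda^2(T^{*(1,0)}_xM)$; if the skew form $\xi_x$ has rank $2r\ge 2$, then its stabilizer in $U(T^{1,0}_xM)\cong U(n)$ is contained in the proper subgroup $\mathrm{Sp}(2r,\mathbb{C})\cap U(n)\times U(n-2r)\subsetneq U(n)$, contradicting $\mathrm{Hol}(M,g)=U(n)$. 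Hence $h^{2,0}(M)=0$, so $H^2(M,\mathcal{O}_M)=0$ by Hodge symmetry, and a small rational perturbation of the K\"ahler class inside $H^{1,1}(M,\mathbb{R})$ lifts to an integral ample $(1,1)$-class via the exponential sheaf sequence; Kodaira's embedding theorem then gives projectivity.

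The hardest step is promoting constancy of $\|\xi\|_{0,g}$ to honest parallelism of $\xi$: with only $\mathcal{C}_{\alpha,\beta}\ge 0$ there is no strict-positivity point to exploit as in Theorem~\ref{thm:vanishing}, so the entire weight of the argument falls on carefully matching the equality case in Lemma~\ref{lma:contra-vanishing-twisted} with the integrated Bochner identity for $(2,0)$-forms. This part is in the spirit of Ni--Zheng \cite{NiZheng2018} in the orthogonal Ricci setting and of Campana--Demailly--Peternell \cite{CampanaDemaillyPeternell2015} in the $\mathrm{Ric}\ge 0$ setting.
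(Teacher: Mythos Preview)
Your overall architecture matches the paper's: constancy via the conformal bump, then parallelism, then a holonomy contradiction. You are also right that Claims~\ref{claim:induc-vanishing} and~\ref{claim:constant-comass} never use quasi-positivity, so the constancy step carries over. The gap is in promoting constancy to parallelism. Constancy of the co-mass $\|\xi\|_{0,g}$ only pins down the \emph{largest} singular value of $\xi$; the equality case of Lemma~\ref{lma:contra-vanishing-twisted} with $\psi=0$ yields $\sum_{i=1}^{2} R_{i\bar i v\bar v}=0$ for the top two-plane but produces no $|\nabla\xi|^2$ term anywhere. Your proposed patch via the integrated Weitzenb\"ock identity $\int_M(|\nabla\xi|^2+\langle R\cdot\xi,\xi\rangle)=0$ does not close: the curvature term $\langle R\cdot\xi,\xi\rangle$ involves \emph{all} singular values of $\xi$, and there is no reason for $\mathcal C_{\alpha,\beta}\ge 0$ (which mixes Ricci with holomorphic sectional curvature) to force it pointwise non-negative, nor does the co-mass rigidity feed into that integral in the way you need.

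The paper sidesteps this by replacing $\xi$ with its top wedge power $s=\xi^k$, where $k$ is the largest integer with $\xi^k\not\equiv0$ (this is precisely the Ni--Zheng trick you cite). Wherever $s\neq 0$ it is decomposable, $s=\lambda\, e^1\wedge\cdots\wedge e^{2k}$, and the pointwise Bochner identity
\[
\left\langle \sqrt{-1}\,\partial\bar\partial\, |s|^{2},\; v\wedge\bar v\right\rangle \;=\; |\nabla_{v}s|^{2}+|s|^{2}\sum_{i=1}^{2k}R_{i\bar i v\bar v}
\]
already carries the gradient term. The conformal-perturbation argument is then run on the \emph{norm} $|s|^2$ rather than the co-mass, giving $|s|^2$ constant; the display now reads $\sum_{i=1}^{2k}R_{i\bar i v\bar v}=-|\nabla_v s|^2/|s|^2$ for every $v$, and substituting this into Berger's averaging inequality~\eqref{eqn:average-trick} with $p=2k$ squeezes $|\nabla_v s|^2=0$ for all $v$. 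Thus $s$ (not $\xi$) is parallel, which is already enough to contradict $\mathrm{Hol}(M,g)=U(n)$.
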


\begin{proof}
We will adapt some argument in \cite[Theorem 1.7]{NiZheng2018}. Suppose on the contrary and let $\xi$ be a non-zero global holomorphic $(2,0)$-form on $M$. Let $k$ be the largest integer so that $s=\xi^k$ is not identically zero on $M$. Then at any point $x\in M$ such that $s(x)\neq0$, there is a $g$-unitary frame $\{e_{i}\}_{i=1}^{n}$ (denote its dual coframe by $\{e^{i}\}_{i=1}^{n}$) such that at $x$,
\[
\xi = \lambda_{1}e^{1}\wedge e^{2}+\cdots+\lambda_{k}e^{2k-1}\wedge e^{2k}, \ \
s = k!(\lambda_{1}\cdots\lambda_{k})e^{1}\wedge\cdots\wedge e^{2k}
\]
for some $\lambda_{i}\neq0$. By the Bochner formula \cite[(4.1)]{NiZheng2018}, we have at $x$,
\begin{equation}\label{eqn:Bochner}
\left\langle \ddb |s|_{g}^{2},\frac{1}{\sqrt{-1}}v\wedge\bar{v}\right\rangle
= |\nabla_{v}s|_{g}^{2}+|s|_{g}^{2}\sum_{i=1}^{2k}R_{i\bar iv\bar v}
\end{equation}
for any $v\in T_{x}^{1,0}M$.

\begin{claim}\label{claim:constant-norm}
The function $|s|^2$ is constant on $M$.
\end{claim}
\begin{proof}[Proof of Claim \ref{claim:constant-norm}]
This is almost identical to that of Claim \ref{claim:constant-comass}. We only sketch the proof. Consider the conformally \K metric $\hat g=e^{\psi}g$. At the maximum point $x_{0}$ of $\log|s|^2_{\hat g}=\log|s|^2_{g}-2k\psi$, by \eqref{eqn:Bochner}, we have
$$\sum_{i=1}^{2k}R_{i\bar iv\bar v}-2k\psi_{v\bar v}\leq 0$$
for any $v\in T_{x_{0}}^{1,0}M$, which is an exact analogy of \eqref{eqn:twist-R}. Then we can repeat the argument of Lemma \ref{lma:contra-vanishing-twisted}, Claim \ref{claim:induc-vanishing} and Claim \ref{claim:constant-comass} to obtain Claim \ref{claim:constant-norm}.
\end{proof}

Since $|s|^2$ is constant and $\mathcal{C}_{\a,\b}\geq 0$, it follows from Berger's averaging trick (see \eqref{eqn:average-trick}) and \eqref{eqn:Bochner} that
\[
\begin{split}
0 \leq {} & \frac{\a}{2k}\sum_{j=2k+1}^n\sum_{i=1}^{2k} R_{i\bar ij\bar j}
+\frac{1}{2k}\left(\a +\frac{2\b}{2k+1} \right)\sum_{j=1}^{2k}\sum_{i=1}^{2k} R_{i\bar ij\bar j} \\
= {} & \frac{\a}{2k|s|^{2}}\sum_{j=2k+1}^n\big|\nabla_{\frac{\de}{\de z^{j}}}s\big|^{2}
+\frac{1}{2k|s|^{2}}\left(\a +\frac{2\b}{2k+1} \right)\sum_{j=1}^{2k}\big|\nabla_{\frac{\de}{\de z^{j}}}s\big|^{2}.
\end{split}
\]
Combining this with $\alpha>0$ and $\alpha+\beta>0$, we obtain $\nabla s=0$ and so $s$ is parallel on $M$. However, together with $\mathrm{Hol}(M,g)=U(n)$, $s$ must be zero, which is impossible.
\end{proof}

\section{Proof of main results and applications}\label{sec:proof and applications}

In this section, we apply results in earlier sections to prove the structure theorem (Theorem \ref{thm:structure}) and vanishing theorem (Theorem \ref{thm:quasi-positive-mixed}), and give some applications on K\"ahler manifolds with various known curvature conditions.

\subsection{Proof of main results}
We first prove the structure theorem for \K manifolds with $\mathcal{C}_{\a,\b}\geq 0$.

\begin{proof}[Proof of Theorem~\ref{thm:structure}]
Let $(\tilde M,\tilde g)$ be the universal cover of $(M,g)$. By applying Theorem \ref{thm:splitting} inductively, we see that $\tilde M^n$ splits isometrically as $N^{n-k}\times \mathbb{C}^k$, where $ N^{n-k}$ does not contain a geodesic line. Since $M$ is compact, it is standard that $N$ is also compact since otherwise we might use the deck transformation to construct a line in $N$, for example see the discussion of the structure theorem under $\Ric\geq 0$ in \cite[Section 7.3.3]{Petersen2016}.

By the de Rham decomposition \cite{deRham1952}, we write
\begin{equation*}
(N^{n-k},h) = \prod_{i=1}^N (X_i^{n_i},h_i)
\end{equation*}
induced by a decomposition of the holonomy representation in irreducible representations. By Berger's classification of holonomy groups \cite{Berger1955}, $X_i$ is either Calabi-Yau (i.e. $c_{1}(X_{i})=0$), Hermitian symmetric space of compact type or has holonomy group $U(n_i)$.

\begin{claim}\label{claim:pseudo}
Each $X_i$ admits a \K metric with $\mathrm{scal}>0$. In particular, the canonical line bundle $K_{X_i}$ is not pseudo-effective.
\end{claim}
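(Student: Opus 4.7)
The plan is to handle each factor $X_i$ via Berger's classification of the irreducible K\"ahler holonomy groups: $\mathrm{Hol}(X_i)$ is either $SU(n_i)$ (strict Calabi-Yau), $Sp(n_i/2)$ (hyperk\"ahler), the holonomy of a compact irreducible Hermitian symmetric space, or the full group $U(n_i)$ in the generic non-symmetric case. A preliminary observation is that the restricted metric $h_i$ inherits $\mathcal{C}_{\alpha,\beta} \geq 0$, since for a tangent vector lying in a single factor of a K\"ahler product both $\Ric(v,\bar v)$ and $R(v,\bar v,v,\bar v)$ depend only on that factor. Berger's averaging lemma on $T^{1,0}_x X_i$ then gives
\[
0 \;\leq\; \fint_{|v|=1} \mathcal{C}_{\alpha,\beta}(v)\,dv \;=\; \tfrac{1}{n_i}\!\left(\alpha + \tfrac{2\beta}{n_i+1}\right)\mathrm{scal}(h_i),
\]
and the coefficient is positive under $\alpha>0$, $\alpha+\beta>0$; hence $\mathrm{scal}(h_i)\geq 0$ pointwise on $X_i$ and $c_1(X_i)\cdot[\omega_{h_i}]^{n_i-1}\geq 0$.

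For the Calabi-Yau and hyperk\"ahler cases $\Ric(h_i)\equiv 0$ forces $\mathrm{scal}(h_i)\equiv 0$ and thus $\mathcal{C}_{\alpha,\beta}\equiv 0$; since $\beta\neq 0$ this gives $H\equiv 0$, and for K\"ahler metrics the holomorphic sectional curvature determines the full curvature tensor, so $X_i$ would be flat---incompatible with being a positive-dimensional compact, simply connected, irreducible factor. In the Hermitian symmetric case of compact type (which in particular covers $X_i=\mathbb{CP}^{n_i}$), $X_i$ is Fano and carries a K\"ahler-Einstein metric with positive Ricci curvature, so both conclusions of the claim hold.

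The remaining case $\mathrm{Hol}(X_i)=U(n_i)$ (generic, $n_i\geq 2$) is the main technical point. By Theorem \ref{thm:holo-vanishing}, $h^{2,0}(X_i)=0$ and $X_i$ is projective. I plan to upgrade $c_1(X_i)\cdot[\omega_{h_i}]^{n_i-1}\geq 0$ to strict positivity; once this is in hand, standard Yamabe/Calabi-type arguments produce a K\"ahler metric with strictly positive scalar curvature, and $K_{X_i}$ is not pseudo-effective by pairing against $[\omega_{h_i}]^{n_i-1}$. If instead equality held, then $\mathrm{scal}(h_i)\equiv 0$ and, by averaging, $\mathcal{C}_{\alpha,\beta}\equiv 0$ pointwise. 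Polarizing the identity $\alpha\Ric(v,\bar v)|v|^2+\beta H(v)=0$ in $v$ and symmetrizing yields the tensorial relation
\[
\beta R_{i\bar j k\bar l}=-\tfrac{\alpha}{4}\bigl(R_{i\bar j}\delta_{kl}+R_{k\bar j}\delta_{il}+R_{i\bar l}\delta_{kj}+R_{k\bar l}\delta_{ij}\bigr)
\]
in any unitary frame. Contracting $k=l$ and summing, combined with the K\"ahler identity $\sum_k R_{i\bar j k\bar k}=R_{i\bar j}$ and $\mathrm{scal}\equiv 0$, forces $\bigl(4\beta+\alpha(n_i+2)\bigr)R_{i\bar j}=0$. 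The coefficient $4\beta+\alpha(n_i+2)$ cannot vanish under $\alpha>0$, $\alpha+\beta>0$, $n_i\geq 2$, since its vanishing would give $\alpha+\beta=\alpha(2-n_i)/4\leq 0$. Hence $\Ric(h_i)\equiv 0$, and the displayed identity then forces $R_{i\bar j k\bar l}\equiv 0$, making $X_i$ flat---the desired contradiction, which is the crux of the proof.
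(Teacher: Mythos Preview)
Your argument has a genuine gap at ``standard Yamabe/Calabi-type arguments produce a K\"ahler metric with strictly positive scalar curvature.'' Yamabe theory is conformal and leaves the K\"ahler category, while Yau's solution of the Calabi conjecture prescribes the Ricci \emph{form} in a cohomology class, not the pointwise sign of the scalar curvature; neither mechanism produces a K\"ahler metric of positive scalar curvature from the data $c_1(X_i)\cdot[\omega_{h_i}]^{n_i-1}>0$ alone. What you actually have at that stage is $\mathrm{scal}(h_i)\geq 0$ with $\mathrm{scal}(h_i)\not\equiv 0$. The paper fills this by running the K\"ahler--Ricci flow from $h_i$: along the flow $\partial_t\,\mathrm{scal}=\Delta\,\mathrm{scal}+|\Ric|^2$, and the strong maximum principle gives the dichotomy $\Ric(h_i)\equiv 0$ or $\mathrm{scal}(h(t))>0$ for all $t>0$. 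The first branch forces $\beta H\equiv 0$ and hence flatness (already excluded), so $h(t)$ for any small $t>0$ is the required K\"ahler metric. Your separate conclusion that $K_{X_i}$ is not pseudo-effective, via pairing with $[\omega_{h_i}]^{n_i-1}$, is correct and unaffected by this gap.

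On the rest: the holonomy case split and the appeal to Theorem~\ref{thm:holo-vanishing} are extraneous for this claim. Your polarization identity in the ``$U(n_i)$'' paragraph nowhere uses the holonomy hypothesis and already subsumes the Calabi--Yau/hyperk\"ahler paragraph; it is a correct, self-contained way to pass from $\mathcal{C}_{\alpha,\beta}\equiv 0$ to flatness (and your check that $4\beta+\alpha(n_i+2)\neq 0$ under $\alpha>0$, $\alpha+\beta>0$, $n_i\geq 2$ is right). The paper, by contrast, proves the claim uniformly for every factor, extracting both the $\Ric\equiv 0$ alternative and the positive-scalar metric from the single K\"ahler--Ricci flow step, and only invokes Berger's holonomy classification afterwards in the proof of Theorem~\ref{thm:structure}.
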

\begin{proof}[Proof of Claim \ref{claim:pseudo}]
We omit the index $i$ for notational convenience. Using $\mathcal{C}_{\a,\b,h}\geq 0$ and Berger's averaging trick (see \eqref{eqn:average-trick}), we have
\[
\begin{split}
0 \leq {} & \fint_{|Z|=1} \a \Ric(Z,\bar Z)+\b H(Z)\, d\theta(Z)\\
= {} & \frac{\a}{n}\sum_{i=1}^n R_{i\bar i}+\frac{2\b}{n(n+1)}\sum_{j=1}^n\sum_{i=1}^n R_{i\bar ij\bar j} \\
= {} & \left(\frac{\a}{n}+\frac{2\b}{n(n+1)}\right)\mathrm{scal}_{h}.
\end{split}
\]
Combining this with $\alpha>0$ and $\alpha+\beta>0$, we obtain $\mathrm{scal}_{h}\geq 0$. We let $h(t)$ be a short-time solution to the \KR flow starting from $h$ on $X\times [0,T)$. By the strong maximum principle, either $\Ric(h(t))\equiv 0$ or $\mathrm{scal}_{h(t)}>0$ for all $t\in (0,T]$. In the former case, then the Ricci curvature of initial metric $h$ also vanishes. Combining this with $\mathcal{C}_{\a,\b,h}\geq 0$ and $\beta\neq0$, we obtain the holomorphic sectional curvature of $h$ vanishes and so $(X,h)$ must be flat. Since $X$ is simply connected and compact, this is impossible. Therefore, we must have $\mathrm{scal}_{h(t)}>0$ on $X$, and so $h(t)$ is the required K\"ahler metric.
\end{proof}

By  Claim~\ref{claim:pseudo}, $X_i$ cannot be Calabi-Yau manifold. On the other hand, it is well-known that compact Hermitian symmetric spaces are Fano. In particular, their second Hodge number vanishes. When $\mathrm{Hol}(X_i^{n_i},h_i)=U(n_i)$. It follows from Theorem~\ref{thm:holo-vanishing} that $h^{2,0}(X_i)=0$. This completes the proof by combining all cases.
\end{proof}

We now move on to consider the quasi-positive case and prove the vanishing theorem, which is an immediate consequence of Theorem \ref{thm:splitting} and \ref{thm:vanishing}.

\begin{proof}[Proof of Theorem \ref{thm:quasi-positive-mixed}]
Since $\mathcal{C}_{\a,\b}$ is positive at some point, the universal cover $(\tilde{M},\tilde{g})$ cannot split. Combining this with Theorem \ref{thm:splitting}, $(\tilde{M},\tilde{g})$ is also compact. Moreover, Theorem \ref{thm:vanishing} shows that the Hodge number $h^{p,0}(M)=h^{p,0}(\tilde{M})=0$ for all $1\leq p\leq n$. By the Kodaira Theorem \cite{Kodaira1954}, both $\tilde M$ and $M$ are projective. Using \cite[Lemma 1]{Kobayashi1961}, we can conclude that $M$ is simply connected by comparing their Euler characteristic numbers.
\end{proof}

\subsection{Applications}\label{subsec:applications}
The structure theorem and vanishing theorem obtained in this paper can be applied to \K manifolds obtained by Ni \cite{Ni2021a,Ni2021b}. Recall in \cite{Ni2021b} that on a \K manifold $(M,g)$, the curvature notion $\Ric^{+}$ defined by
\[
\Ric^+(X,\bar X) = \mathcal{C}_{1,1}(X) = \Ric(X,\bar X)+R(X,\bar X,X,\bar X)
\]
for unitary $X\in T^{1,0}M$. In a similar but different spirit, the concept of $k$-Ricci curvature $\Ric_k$ was also introduced by Ni \cite{Ni2021a} which serves as an interpolation between the Ricci curvature and holomorphic sectional curvature. More precisely, for any $p\in M$ and any $k$-dimensional subspace $U$ of $T_p^{1,0}M$, $\Ric_{k,U}$ is defined to be the Ricci tensor for $\mathrm{Rm}|_U$ (the restriction of curvature tensor $\mathrm{Rm}$ on $U$). We say that $\Ric_k$ is quasi-positive if $\Ric_{k,U}\geq 0$ for all $p\in M$ and $k$-dimensional subspace $U$ in $T_p^{1,0}M$, and is positive at some $p_0\in M$ for all $k$-dimensional subspace $U$ in $T_{p_0}^{1,0}M$. We refer readers to \cite[Section 2]{ChuLeeTam2022} for a detailed exposition.

In \cite{Ni2021b}, Ni showed that the \K manifold $M^n$ is simply connected and projective with $h^{p,0}=0$ for all $1\leq p\leq n$ if $\Ric^+>0$ or $\Ric_k>0$ for some $1\leq k\leq n$ (see also the work of Tang \cite{Tang2024}). From \cite[Lemma 2.2]{ChuLeeTam2022}, we see that Theorem \ref{thm:structure} and \ref{thm:quasi-positive-mixed} can be applied to compact K\"ahler manifolds with non-negative or quasi-positive $\Ric^+$ or $\Ric_k$ for $2\leq k\leq n-1$.

\appendix

\section{Conformal calculation}\label{sec:conformal calculation}

\begin{lma}\label{Conformal lemma}
Let $(M^{n},g,J)$ be a K\"ahler manifold and $\hat{g}=e^{-2h}g$ for some $f\in C^{\infty}(M)$. Then the following holds.
\begin{enumerate}\setlength{\itemsep}{1mm}
\item[(a)] $|\hat{\nabla}J|_{\hat{g}}^{2} \leq C_{n}e^{2h}|\nabla h|_{g}^{2}$ for some dimensional constant $C_{n}$.
\item[(b)] For $g$-unit vector $v$, define $\hat{g}$-unit vector $\hat{v}=e^{h}v$. Then
\[
\begin{split}
\mathcal{C}_{\alpha,\beta,\hat{g}}(\hat{v})
= {} e^{2h}\Big( & \mathcal{C}_{\alpha,\beta,g}(v)+\alpha\Delta h+(2n\alpha-2\alpha+\beta)\nabla^{2}h(v,v)+\beta\nabla^{2}h(Jv,Jv) \\
& +(2n\alpha-2\alpha+\beta)v(h)^{2}+\beta Jv(h)^{2}-(2n\alpha-2\alpha+\beta)|\nabla h|^{2} \Big).
\end{split}
\]
\end{enumerate}
Here $\hat{\nabla}$ denotes the Levi-Civita connection of $\hat{g}$ and $\mathcal{C}_{\a,\b,\hat{g}}$ denotes the mixed curvature of $\hat{g}$ defined in Definition \ref{def:c a b Hermitian}.
\end{lma}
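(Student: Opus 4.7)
My approach is a direct computation using the standard conformal transformation laws for the Levi-Civita connection, the Ricci tensor, and the Riemann tensor. The K\"ahler hypothesis on $g$ enters only through the single identity $\nabla J = 0$.

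For part (a), the starting point is the connection difference formula for $\hat g = e^{-2h}g$:
\begin{equation*}
\hat\nabla_X Y \;=\; \nabla_X Y - X(h)\,Y - Y(h)\,X + g(X,Y)\,\nabla h.
\end{equation*}
Because $\nabla J = 0$, expanding $(\hat\nabla_X J)(Y) = \hat\nabla_X(JY) - J(\hat\nabla_X Y)$ forces the Levi-Civita pieces to cancel, and the remaining difference tensor is purely algebraic and linear in $dh$, namely
\begin{equation*}
(\hat\nabla_X J)(Y) \;=\; -dh(JY)\,X + g(X,JY)\,\nabla h + dh(Y)\,JX - g(X,Y)\,J\nabla h.
\end{equation*}
The four terms give $|\hat\nabla J|_g \leq C_n|\nabla h|_g$ for a dimensional constant. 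Converting from the $g$-norm to the $\hat g$-norm of the $(1,2)$-tensor $\hat\nabla J$ picks up the scalar factor $e^{2h}$ (two $\hat g^{-1}$ contractions contribute $e^{4h}$ and one $\hat g$ contraction contributes $e^{-2h}$), which is exactly the inequality in (a).

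For part (b), I would treat the Ricci and sectional contributions separately. Using the standard conformal Ricci identity
\begin{equation*}
\widehat{\Ric}_{ij} \;=\; \Ric_{ij} + (2n-2)\bigl(\nabla_i\nabla_j h + \nabla_i h\,\nabla_j h - |\nabla h|^2 g_{ij}\bigr) + \Delta h\, g_{ij}
\end{equation*}
and the homogeneity $\widehat{\Ric}(\hat v,\hat v) = e^{2h}\widehat{\Ric}(v,v)$ for $\hat v = e^h v$, the $\alpha$-part of $\mathcal{C}_{\alpha,\beta,\hat g}(\hat v)$ directly produces the $\alpha\Delta h$, $(2n\alpha-2\alpha)\nabla^2 h(v,v)$, $(2n\alpha-2\alpha)v(h)^2$ and $-(2n\alpha-2\alpha)|\nabla h|^2$ pieces of the formula. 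For the $\beta$-part, I would use the conformal Riemann transformation
\begin{equation*}
\hat R_{ijkl} \;=\; e^{-2h}\bigl[R_{ijkl} + (g \owedge P)_{ijkl}\bigr],
\end{equation*}
where $P = -\nabla^2 h - dh\otimes dh + \tfrac12|\nabla h|^2 g$ is the (sign-adjusted) Schouten-type tensor and $g \owedge P$ denotes the Kulkarni-Nomizu product. Contracting against $(v, Jv, Jv, v)$, the orthogonality $g(v,Jv)=0$ together with $|v|_g=|Jv|_g=1$ collapses the Kulkarni-Nomizu term to $-P(v,v) - P(Jv,Jv)$, and the rescaling $\hat R(\hat v, J\hat v, J\hat v, \hat v) = e^{4h}\hat R(v, Jv, Jv, v)$ yields the $\beta$-contribution, producing precisely the $\beta\nabla^2 h(Jv,Jv)$, $\beta Jv(h)^2$ and the extra $\beta$-shifts inside the $\nabla^2 h(v,v)$, $v(h)^2$, $|\nabla h|^2$ coefficients. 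Regrouping by these six monomials completes (b).

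The main obstacle is the sign bookkeeping, especially in the Riemann transformation, since both the sign of the Kulkarni-Nomizu term and the sign inside $P$ are convention-dependent across references. To lock the signs down, I would first take the $g$-trace of my Riemann formula and cross-check it against the Ricci formula above; once that consistency check is satisfied, the evaluation at $(v,Jv,Jv,v)$ and subsequent regrouping is mechanical.
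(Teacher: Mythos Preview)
Your proposal is correct and follows essentially the same route as the paper: both proofs compute $(\hat\nabla_X J)(Y)$ from the conformal connection-difference formula together with $\nabla J=0$ for part (a), and for part (b) both feed the standard conformal transformation laws for $\widehat{\Ric}$ and $\hat R$ into $\mathcal{C}_{\alpha,\beta,\hat g}(\hat v)=\alpha\,\widehat{\Ric}(\hat v,\hat v)+\beta\,\hat R(\hat v,J\hat v,J\hat v,\hat v)$ and regroup. The only cosmetic difference is that the paper quotes the sectional curvature transformation $\hat R(\hat v,\hat w,\hat w,\hat v)=e^{2h}\bigl(R(v,w,w,v)+\nabla^2h(v,v)+\nabla^2h(w,w)+v(h)^2+w(h)^2-|\nabla h|^2\bigr)$ for $g(v,w)=0$ directly, whereas you reach the same expression via the Kulkarni--Nomizu form of the full Riemann transformation; your trace-consistency check is a sensible safeguard for the sign conventions.
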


\begin{proof}
Denote the curvature and Ricci tensor of $\hat{g}$ by $\hat{R}$ and $\widehat{\Ric}$. For any $g$-unit vector $w$, define $\hat{g}$-unit vector $\hat{w}=e^{h}w$. Direct calculation shows
\[
\hat{\nabla}_{v}w-\nabla_{v}w = -v(h)w-w(h)v+g(v,w)\nabla h
\]
and so
\[
\begin{split}
\hat{\nabla}_{\hat{v}}\hat{w}
= {} & \hat{\nabla}_{e^{h}v}(e^{h}w) \\
= {} & e^{2h}\big(v(h)w+\hat{\nabla}_{v}w\big) \\
= {} & e^{2h}\big(\nabla_{v}w-w(h)v+g(v,w)\nabla h\big).
\end{split}
\]
Replacing $\hat{w}$ by $J\hat{w}$,
\[
\hat{\nabla}_{\hat{v}}(J\hat{w})
= e^{2h}\big(\nabla_{v}(Jw)-Jw(h)v+g(v,Jw)\nabla h\big).
\]
We compute
\[
\begin{split}
& (\hat{\nabla}_{\hat{v}}J)\hat{w} \\
= {} & \hat{\nabla}_{\hat{v}}(J\hat{w})-J(\hat{\nabla}_{\hat{v}}\hat{w}) \\
= {} & e^{2h}\big(\nabla_{v}(Jw)-Jw(h)v+g(v,Jw)\nabla h\big) -e^{2h}\big(\nabla_{v}w-w(h)v+g(v,w)\nabla h\big) \\
= {} & e^{2h}\big((\nabla_{v}J)w-Jw(h)v+g(v,Jw)\nabla h+w(h)v-g(v,w)\nabla h\big).
\end{split}
\]
Combining this with $\nabla J=0$ (follows from the K\"ahlerity of $g$), we obtain (a).

\medskip

For (b), when $g(v,w)=0$, it is well-known that
\[
\hat{R}(\hat{v},\hat{w},\hat{w},\hat{v})
= e^{2h}\big( R(v,w,w,v)+\nabla^{2}h(v,v)+\nabla^{2}h(w,w)+v(h)^{2}+w(h)^{2}-|\nabla h|^{2} \big).
\]
It then follows that
\[
\widehat{\Ric}(\hat{v},\hat{v})
= e^{2h}\big( \Ric(v,v)+\Delta h+(2n-2)\nabla^{2}h(v,v)-(2n-2)|\nabla h|^{2}+(2n-2)v(h)^{2} \big)
\]
and
\[
\begin{split}
& \hat{R}(\hat{v},J\hat{v},J\hat{v},\hat{v}) \\
= {} & e^{2h}\big( R(v,Jv,Jv,v)+\nabla^{2}h(v,v)+\nabla^{2}h(Jv,Jv)+v(h)^{2}+Jv(h)^{2}-|\nabla h|^{2} \big).
\end{split}
\]
Combining the above, we obtain (b).
\end{proof}


\begin{thebibliography}{100}

\bibitem{Berger1955} Berger, M., {\sl Sur les groupes d'holonomie homog\`{e}ne des vari\'{e}t\'{e}s \`{a} connexion affine et des vari\'{e}t\'{e}s riemanniennes}, Bull. Soc. Math. France 83 (1955), 279--330.

\bibitem{Campana1992} Campana, F., {\sl Connexit\'e rationnelle des vari\'et\'es de Fano}, Ann. Sci. \'Ecole Norm. Sup. (4) 25 (1992), no. 5, 539--545.

\bibitem{CampanaDemaillyPeternell2015} Campana, F.; Demailly J.-P. ; Peternell, T., {\sl Rationally connected manifolds and semipositivity of the Ricci curvature}, Recent advances in algebraic geometry, 71--91. London Math. Soc. Lecture Note Ser., 417 Cambridge University Press, Cambridge, 2015.

\bibitem{CheegerGromoll1971} Cheeger, J.; Gromoll, D., {\sl The splitting theorem for manifolds of nonnegative Ricci curvature}, J. Differential Geometry 6 (1971), 119--128.

\bibitem{ChodoshEichmairMoraru2019} Chodosh, O.; Eichmair, M.; Moraru, V., {\sl A splitting theorem for scalar curvature}, Comm. Pure Appl. Math. 72 (2019), no. 6, 1231--1242.

\bibitem{ChuLeeTam2022} Chu, J.; Lee, M.-C.; Tam, L.-F., {\sl K\"ahler manifolds and mixed curvature}, Trans. Amer. Math. Soc. 375 (2022), no. 11, 7925--7944.

\bibitem{deRham1952} de Rham, G., {\sl Sur la reductibilit\'{e} d'un espace de Riemann}, Comment. Math. Helv. 26 (1952), 328--344.

\bibitem{Frankel1961} Frankel, T., {\sl Manifolds with positive curvature}, Pacific J. Math. 11 (1961), 165--174.

\bibitem{Hartshorne1970} Hartshorne, R., {\sl Ample subvarieties of algebraic varieties}, Notes written in collaboration with C. Musili. Lecture Notes in Math., Vol. 156. Springer-Verlag, Berlin-New York, 1970. xiv+256 pp.

\bibitem{HeierWong2020} Heier, G.; Wong, B., {\sl On projective K\"ahler manifolds of partially positive curvature and rational connectedness}, Doc. Math. 25 (2020), 219--238.

\bibitem{HowardSmythWu1981} Howard, A.; Smyth, B.; Wu, H., {\sl On compact K\"ahler manifolds of nonnegative bisectional curvature. I}, Acta Math. 147 (1981), no. 1--2, 51--56.

\bibitem{Kobayashi1961} Kobayashi, S., {\sl On compact K\"ahler manifolds with positive definite Ricci tensor}, Ann. of Math. (2) 74 (1961), 570--574.

\bibitem{Kodaira1954} Kodaira, K., {\sl On K\"ahler varieties of restricted type (an intrinsic characterization of algebraic varieties)}, Ann. of Math. (2) 60 (1954), 28--48.

\bibitem{KollarMiyaokaMori1992} Koll\'ar, J.; Miyaoka, Y.; Mori, S., {\sl Rational connectedness and boundedness of Fano manifolds}, J. Differential Geom. 36 (1992), no. 3, 765--779.

\bibitem{LeeLi2022} Lee, M.-C.; Li, K.-F., {\sl Deformation of Hermitian metrics}, Math. Res. Lett. 29 (2022), no. 5, 1485--1497.

\bibitem{Liu2013} Liu, G., {\sl $3$-manifolds with nonnegative Ricci curvature}, Invent. Math. 193 (2013), no. 2, 367--375.

\bibitem{Matsumura2020} Matsumura, S., {\sl On the image of MRC fibrations of projective manifolds with semi-positive holomorphic sectional curvature}, Pure Appl. Math. Q. 16 (2020), no. 5, 1419--1439.

\bibitem{Matsumura2022} Matsumura, S., {\sl On projective manifolds with semi-positive holomorphic sectional curvature}, Amer. J. Math. 144 (2022), no. 3, 747--777.

\bibitem{Mok1988} Mok, N., {\sl The uniformization theorem for compact K\"ahler manifolds of nonnegative holomorphic bisectional curvature}, J. Differential Geom. 27 (1988), no. 2, 179--214.

\bibitem{Mori1979} Mori, S., {\sl Projective manifolds with ample tangent bundles}, Ann. of Math. (2) 110 (1979), no. 3, 593--606.

\bibitem{Ni2021a} Ni, L., {\sl Liouville theorems and a Schwarz Lemma for holomorphic mappings between K\"ahler manifolds}, Comm. Pure Appl. Math. 74 (2021), no. 5, 1100--1126.

\bibitem{Ni2021b} Ni, L., {\sl The fundamental group, rational connectedness and the positivity of K\"ahler manifolds}, J. Reine Angew. Math. 774 (2021), 267--299.

\bibitem{NiZheng2018} Ni, L.; Zheng, F., {\sl Comparison and vanishing theorems for K\"ahler manifolds}, Calc. Var. Partial Differential Equations 57 (2018), no. 6, Paper No. 151, 31 pp.

\bibitem{Petersen2016} Petersen, P., {\sl Riemannian geometry}, Third edition. Grad. Texts in Math., 171. Springer, Cham, 2016. xviii+499 pp.

\bibitem{SiuYau1980} Siu, Y.-T.; Yau, S.-T., {\sl Compact K\"ahler manifolds of positive bisectional curvature}, Invent. Math. 59 (1980), no. 2, 189--204.

\bibitem{Tang2024} Tang, K., {\sl Quasi-positive curvature and vanishing theorems}, preprint, arXiv:2405.03895.

\bibitem{Yang2018} Yang, X., {\sl RC-positivity, rational connectedness and Yau's conjecture}, Camb. J. Math. 6 (2018), no. 2, 183--212.

\bibitem{Yang2020} Yang, X., {\sl Compact K\"ahler manifolds with quasi-positive second Chern-Ricci curvature}, preprint, arXiv:2006.13884.

\bibitem{Yau1982} Yau, S.-T., {\sl Problem section}. Seminar on Differential Geometry, pp. 669--706. Ann. of Math. Stud., No. 102. Princeton University Press, Princeton, NJ, 1982.

\bibitem{ZhangZhang2023} Zhang, S.; Zhang, X., {\sl On the structure of compact K\"ahler manifolds with nonnegative holomorphic sectional curvature}, preprint, arXiv:2311.18779.

\end{thebibliography}
\end{document}